\definecolor{light-blue}{rgb}{0.8,0.85,1}
\definecolor{light-red}{rgb}{1,.4,.4}
\definecolor{purp}{rgb}{.7,.3,1}
\definecolor{yel}{rgb}{1,1,.5}
\definecolor{cy}{rgb}{0,1,1}
\theoremstyle{plain}
\newtheorem{theorem}{Theorem}[section]
\newtheorem{corollary}[theorem]{Corollary}
\newtheorem{lemma}[theorem]{Lemma}
\theoremstyle{definition}
\newtheorem{remark}[theorem]{Remark}
\newtheorem{definition}[theorem]{Definition}
\newtheorem{conjecture}[theorem]{Conjecture}
\newtheorem{question}[theorem]{Question}
\newcommand{\p}{\partial}
\newcommand{\spin}{\mathsf{spin}}
\newcommand{\psc}{positive scalar curvature}
\newcommand{\co}{\colon\,}
\newcommand{\bR}{\mathbb R}
\newcommand{\bC}{\mathbb C}
\newcommand{\bH}{\mathbb H}
\newcommand{\bZ}{\mathbb Z}
\newcommand{\bP}{\mathbb P}
\newcommand{\cK}{\mathcal K}
\newcommand{\cR}{\mathcal R}
\newcommand{\pt}{\text{\textup{pt}}}
\newcommand{\lp}{\textup{(}}
\newcommand{\rp}{\textup{)}}
\newcommand{\Ca}{$C^*$-algebra}
\newcommand{\Cl}{\mathcal{C}\ell}
\newcommand{\per}{\operatorname{per}}
\newcommand{\Dbl}{\operatorname{Dbl}}
\newcommand{\homdim}{\operatorname{hom\,\,dim}}
\newcommand{\Dirac}{\partial\!\!\!/}
\title[PSC on Doubles]{Positive scalar
  curvature on manifolds with boundary and their doubles}
\author{Jonathan  Rosenberg}
\address{Department of Mathematics\\ University of
  Maryland\\ College Park, MD 20742, USA} \email[Jonathan
  Rosenberg]{jmr@umd.edu}
\urladdr{http://www2.math.umd.edu/\raisebox{-3pt}{~}jmr/}
\author{Shmuel Weinberger}\thanks{Partially supported by NSF grant 1811071.}
\address{Department of Mathematics\\ University of
  Chicago\\ Chicago, IL 60637, USA}
\urladdr{http://math.uchicago.edu/\raisebox{-3pt}{~}shmuel/}
\begin{document}
\begin{abstract}
  This paper is about positive scalar curvature on a compact
  manifold $X$ with non-empty boundary $\partial X$.  In some cases,
  we completely answer the question of when $X$ has a positive
  scalar curvature metric which is a product metric near $\partial X$,
  or when $X$ has a positive scalar curvature metric with positive
  mean curvature on the boundary, and more generally,
  we study the relationship between boundary conditions on
  $\partial X$ for {\psc} metrics on $X$ and the positive scalar curvature
  problem for the double $M=\Dbl(X,\partial X)$.
\end{abstract}
\keywords{positive scalar curvature, mean curvature, surgery,
  bordism, $K$-theory, index}
\subjclass[2010]{Primary 53C21; Secondary 58J22, 53C27, 19L41, 55N22}
\dedicatory{Dedicated to Blaine Lawson on his 80th birthday, with
  appreciation and admiration}

\maketitle

\vspace*{-10mm}

\section{Introduction}
\label{sec:intro}

This paper is motivated by two important theorems of Blaine Lawson
(one with Misha Gromov and one with Marie-Louise Michelsohn)
relating curvature properties of a compact manifold with
non-empty boundary to mean curvature of the boundary:
\begin{theorem}[{\cite[Theorem 5.7]{MR569070}}]
  \label{thm:GLdoubling}
  Let $X$ be a compact manifold with boundary, of dimension $n$, and
  let $M=\Dbl(X,\partial X)$ denote the double of $X$ along the boundary
  $\p X$.  If $X$ admits a metric of {\psc} with positive mean
  curvature $H>0$ along the boundary $\p X$
  {\lp}with respect to the outward-pointing normal{\rp}, then $M$ admits a
  metric of {\psc}.
\end{theorem}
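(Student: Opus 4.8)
The plan is to use Gromov and Lawson's ``bent collar'' construction, modifying the metric of $X$ only inside a thin collar of $\p X$ so that, after doubling, one gets a smooth {\psc} metric on $M$. Fix a {\psc} metric $g$ on $X$ with $H>0$ along $\p X$, and use Fermi coordinates: a collar $\p X\times[0,2\delta]$ on which $g=dt^2+g_t$, with $t$ the distance to $\p X$. The hypothesis $H>0$ (outward normal) says exactly that the boundary slices shrink as one moves inward, i.e.\ $m_0:=\operatorname{tr}_{g_0}\!\bigl(\tfrac12\p_t g_t|_{t=0}\bigr)<0$; after shrinking $\delta$ we may assume $m_t<0$ for all $t\in[0,2\delta]$. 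Also set $C:=\sup_{[0,2\delta]}|\operatorname{scal}(g_t)|$, a bound on the \emph{intrinsic} scalar curvatures of the slices (which may well be negative), and $h_0:=\inf_{[0,2\delta]}|m_t|>0$.

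In $M$ the two collars glue to a ``doubled collar'' $\p X\times(-2\delta,2\delta)$, with $|t|$ the distance to $\p X=\p X\times\{0\}$. On it I would put $\widehat g=dt^2+g_{\tau(t)}$, where $\tau\co(-2\delta,2\delta)\to(0,2\delta]$ is smooth, even, equal to $|t|$ for $|t|\ge\delta$ (so $\widehat g=g$ smoothly off the collar), with $\tau(0)$ small and $\tau$ convex on $[0,2\delta]$ with a sharp minimum at $0$; the evenness of $\tau$ is what removes the corner along $\p X$ that naive doubling produces. (Equivalently, near $\p X$ this is the hypersurface $\p X\times c\subset X\times\bR$ in the product metric $g+ds^2$, where $c$ is a curve in the $(t,s)$-half-plane, horizontal near its two ends at $t=2\delta$ and dipping leftward with one sharp bend; the ends glue to two copies of $X$ with its collar removed, producing a hypersurface diffeomorphic to $M$.)

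The crux is the scalar curvature of this reparametrized warped product. Substituting $t\mapsto\tau(t)$ into the standard identity $\operatorname{scal}(dt^2+g_t)=\operatorname{scal}(g_t)-2m_t'-m_t^2-|K_t|^2$, with $K_t=\tfrac12\p_t g_t$ and $m_t=\operatorname{tr}_{g_t}K_t$, gives
\[
\operatorname{scal}(\widehat g)=(\tau')^2\operatorname{scal}(g)+\bigl(1-(\tau')^2\bigr)\operatorname{scal}(g_\tau)-2\,\tau''\,m_\tau
\]
(everything evaluated at $\tau(t)$, with $'=d/dt$); this is also what the Gauss equation yields in the hypersurface picture. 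Off the bend, $\widehat g=g$ is {\psc}. On the bend: the first term is $\ge0$; the middle term is $\ge-C\bigl(1-(\tau')^2\bigr)$; and the last term is $-2\tau''m_\tau=2\tau''|m_\tau|\ge2h_0\,\tau''$, because there $\tau$ is convex and $m_\tau<0$. Choosing the bend sharp enough --- $\tau''$ large and concentrated near $t=0$, where $1-(\tau')^2\approx1$ --- one makes $2h_0\tau''>C\bigl(1-(\tau')^2\bigr)$ throughout, so $\operatorname{scal}(\widehat g)>0$ everywhere and $M$ carries a {\psc} metric.

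The step I expect to be the real obstacle is the sign bookkeeping in the first paragraph: establishing that ``$H>0$ with respect to the outward normal'' is precisely the condition $m_t<0$, so that the bending term $-2\tau''m_\tau$ comes out positive for the natural inward-dipping (convex-in-$t$) choice of $\tau$ --- the hypothesis is used exactly once, with this sign, and the opposite sign would generically kill {\psc} on the double (as it must, since e.g.\ $T^n\times[0,1]$ has no {\psc} metric that is mean convex at both ends). The rest --- the quantitative choice of $\tau$, smoothness of $\widehat g$, and assembling a metric on all of $M$ --- is routine.
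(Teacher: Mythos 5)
Your proof is correct and captures the same geometric idea as the paper's proof (and Gromov--Lawson's original): bend a collar of $\p X$ inside $X\times\bR$ so that, after doubling, the two copies meet smoothly, and show the bending produces positive scalar curvature thanks to $H>0$. What you do differently is the execution: the paper works extrinsically, realizing $M$ as the boundary of the $\varepsilon$-neighborhood $N$ of $X_1\subset X\times I$, computing the shape operator of $\p N$ at a point (with a semicircular bend of radius $\varepsilon$), and invoking the Gauss equation with an explicit ``$\approx$'' and the small parameter $\varepsilon$; you instead work intrinsically, writing the doubled collar as $dt^2+g_{\tau(t)}$ for a general even convex profile $\tau$ and applying the exact scalar-curvature identity for metrics of the form $dt^2+g_t$. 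Your identity $\operatorname{scal}(\widehat g)=(\tau')^2\operatorname{scal}(g)+(1-(\tau')^2)\operatorname{scal}(g_\tau)-2\tau'' m_\tau$ is a correct consequence of that formula and the chain rule, and the sign bookkeeping is right: in the paper's convention (Remark 1.4), ``$H>0$ with outward normal'' means slices shrink inward, i.e.\ $m_t<0$, so the bending term $-2\tau'' m_\tau$ is positive for convex $\tau$. Compared with the paper's calculation, yours is exact (the paper's approximate formula in fact drops a bounded term $-2\cos^2\theta\,\operatorname{Ric}_X(\boldsymbol r,\boldsymbol r)$ coming from the Gauss equation, though this does not affect the conclusion) and allows an arbitrary convex profile rather than a fixed semicircle, which makes the quantitative step cleaner. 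One small imprecision: you cannot literally have $2h_0\tau''>C(1-(\tau')^2)$ all the way to $t=\delta$ where both sides vanish, but this doesn't matter because the neglected first term $(\tau')^2\operatorname{scal}(g)\ge(\tau')^2\inf_X\operatorname{scal}(g)$ already makes $\operatorname{scal}(\widehat g)$ positive once $\tau'$ is close to $1$; alternatively, taking $1-\tau'$ to vanish to infinite order at $\delta$ gives $\tau''/(1-(\tau')^2)\to\infty$, so the inequality does hold on $[0,\delta)$.
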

\begin{theorem}[{\cite[Theorem 1.1]{MR759265}}]
  \label{thm:MichelsohnLawson} If
  $X$ is a compact manifold with non-empty boundary, of dimension $n\ne 4$,
  then $X$ admits a Riemannian metric with positive sectional curvature
  and positive mean  curvature along the boundary $\p X$
  {\lp}with respect to the outward-pointing normal{\rp}
  if and only if $\pi_1(X,\p X)=*$. Furthermore
  \textup{({\cite[Theorem 1.2]{MR759265}})}, if $X$ is parallelizable,
  then one can replace positive sectional curvature in this result
  by {\bfseries constant} positive sectional curvature.
\end{theorem}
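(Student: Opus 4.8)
The plan is to establish the biconditional in both directions and then upgrade the construction in the parallelizable case; I assume $X$ connected (otherwise argue componentwise), so that $\pi_1(X,\p X)=*$ is equivalent to $\p X$ being connected and $\pi_1(\p X)\to\pi_1(X)$ being onto.  For \emph{necessity}, suppose $g$ has all sectional curvatures positive and mean curvature $H>0$ along $\p X$ with respect to the outward normal.  By compactness $\operatorname{sec}_g\ge\kappa>0$, hence $\operatorname{Ric}_g\ge(n-1)\kappa>0$, and this sign of $H$ is exactly the mean-convexity condition that turns $\p X$ into a barrier for area-minimizing hypersurfaces in the interior.  If $\p X$ were disconnected, say $\p X=\Sigma_0\sqcup\Sigma_0'$ with both parts nonempty, I would minimize $(n-1)$-area among hypersurfaces of $X$ separating $\Sigma_0$ from $\Sigma_0'$ (to be safe about realizing the infimum, using a $\mu$-bubble with a weight tending to $+\infty$ near $\Sigma_0$ and to $-\infty$ near $\Sigma_0'$): the barrier keeps the minimizer $S$ in $\operatorname{int}X$, so $S$ is a closed two-sided stable minimal hypersurface (smooth for $n\le 8$; for $n\ge 9$ its singular set has codimension $\ge 7$ and is harmless), and substituting the constant $1$ in the stability inequality gives $0\ge\int_S(|\mathrm{II}_S|^2+\operatorname{Ric}_g(\nu,\nu))>0$, a contradiction.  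For surjectivity of $\pi_1(\p X)\to\pi_1(X)$, pass to the cover $X'$ of $X$ corresponding to the image subgroup; then $\p X'$ has at least two components, and the same minimizing argument run inside $X'$ (one must check the minimizer still exists when $X'$ is noncompact, using the uniform bound $\kappa$ together with the barrier) reaches the same contradiction.  Hence $\pi_1(X,\p X)=*$.

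For \emph{sufficiency}, assume $\pi_1(X,\p X)=*$ and $n\ne 4$; take $n\ge 5$, the cases $n\le 3$ being elementary on the topological side.  Because $\pi_1(\p X)\twoheadrightarrow\pi_1(X)$, Smale's handle-trading lets one present $X$ by a handle decomposition relative to a collar $\p X\times[0,1]$ using only handles of index $\ge 2$; dually, $X$ is built from a single $n$-disk $D^n$ by attaching handles of index $\le n-2$ only---equivalently, $\p X$ is obtained from $S^{n-1}$ by a finite sequence of surgeries along framed embedded spheres of codimension $\ge 2$.  I would then construct the metric inductively along this decomposition, starting from a small round cap $D^n\subset S^n$ (which has $\operatorname{sec}\equiv 1$ and strictly mean-convex boundary) and applying the following \emph{surgery lemma}: if $(Y^n,g)$ has positive sectional curvature with mean-convex boundary and $Y'$ is obtained by attaching to $\p Y$ a handle $D^\ell\times D^{n-\ell}$ with $\ell\le n-2$, then $Y'$ admits such a metric, equal to $g$ away from a neighborhood of the handle.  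One gives the handle a ``thin'' metric in which the codimension-$(\ge 2)$ normal factor $D^{n-\ell}$ is scaled very small, so that O'Neill-type terms force the total space to be positively curved and the new part of the boundary---which contains a tiny round $S^{n-\ell-1}$ factor---acquires large positive mean curvature, allowing the gluing to be smoothed while preserving both properties.

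The surgery lemma is where I expect the genuine difficulty to lie, and it is presumably the real content of the theorem.  Unlike the Gromov--Lawson surgery for \emph{scalar} curvature, carrying positivity of the full \emph{sectional} curvature through a surgery is very rigid---indeed the analogous codimension-$2$ statement is false for closed manifolds---and what saves the situation is that $X$ has a boundary and we only require the \emph{mean} curvature of $\p X$ (not its entire second fundamental form) to be positive, which is precisely what leaves room to absorb the thin handle; making the curvature estimates near the gluing region rigorous is the technical heart.  Finally, the constant-curvature refinement: if $X$ is parallelizable then by the Hirsch--Smale immersion theorem $X$ immerses in $\bR^n$, hence in $S^n$, so pulling back the round metric already gives constant curvature $1$; one re-runs the handle construction \emph{inside} $S^n$, realizing each handle of index $\le n-2$ as an immersed \emph{thin} handle (these extend by the relative immersion $h$-principle because they are subcritical, and a thin immersed tube in $S^n$ already has mean-convex boundary even though the ambient curvature is now fixed), and arranges the resulting immersed boundary $\p X\looparrowright S^n$ to be mean-convex.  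The hypothesis $n\ne 4$ is used only in this sufficiency half, in the handle-trading and in the smoothing/$h$-principle steps.
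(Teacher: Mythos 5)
A point of order first: the paper does not prove this statement --- it is quoted from Lawson--Michelsohn --- but the paper does reveal the intended structure of the argument by reproducing, as Theorem \ref{thm:handleattach}, the handle-attachment result from which Theorem 1.1 of that reference is deduced. That already tells you the existence direction is \emph{extrinsic}: one fixes an ambient positively curved $n$-manifold $\Omega$ once and for all, realizes $X$ in it (up to immersion) as a domain built from a small metric ball by attaching handles of index $p\le n-2$, and at each stage only has to arrange that the new boundary \emph{hypersurface} of $\Omega$ keeps $H>0$, which is exactly Theorem \ref{thm:handleattach}; positivity of the sectional curvature of the induced metric on $X$ is automatic because the ambient metric is never touched. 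Your sufficiency argument instead rests on an \emph{intrinsic} surgery lemma --- glue a ``thin'' handle metric onto $(Y,g)$ and let ``O'Neill-type terms'' keep all sectional curvatures positive --- which you correctly identify as the heart of the matter but do not prove, and which as sketched would fail: a metric on $D^\ell\times D^{n-\ell}$ that is thin in the $D^{n-\ell}$ factor is modelled on a product or Riemannian submersion, whose mixed $2$-planes (one base direction, one fibre direction) have curvature $\le 0$ up to O'Neill terms that vanish precisely in the product case, so shrinking the fibre creates no positive mixed curvature; and one must still match this metric to $g$ on an overlap without destroying positivity of every $2$-plane there. There is no Gromov--Lawson-type surgery calculus for positive \emph{sectional} curvature, which is exactly why the actual proof never modifies the metric at all. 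Your treatment of the parallelizable case --- immerse $X$ in $S^n$ by Hirsch--Smale and perturb (``frizz'') the boundary immersion to be mean convex --- is in fact the correct template, and the non-parallelizable Theorem 1.1 runs the same way with a suitable ambient $\Omega$ in place of $S^n$; you should promote that mechanism from the refinement to the main statement.

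The necessity direction is sound in outline but deploys far heavier tools than needed and leaves genuine gaps exactly where you flag them: existence and regularity of the separating minimizer (or $\mu$-bubble) for $n\ge 9$, admissibility of the constant test function across the singular set in the stability inequality, and existence of minimizers in the noncompact cover $X'$. All of this is avoided by the classical Frankel/Synge second-variation argument along a length-minimizing \emph{geodesic}: positive Ricci curvature (which positive sectional curvature gives you) together with mean convexity of $\p X$ makes the index form of a minimizing geodesic between two boundary components, or from $\p X$ to itself representing a nontrivial class of $\pi_1(X,\p X)$, negative on a parallel orthonormal frame --- a one-dimensional argument valid in every dimension with no regularity theory. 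Your remark that $n\ne 4$ enters only through the handle trading needed to obtain the relative decomposition with handles of index $\ge 2$ is correct; but until the intrinsic surgery lemma is replaced by the ambient-hypersurface argument, the existence half of the theorem is not established by your proposal.
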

Theorem \ref{thm:MichelsohnLawson}, in turn, is a consequence of an
intermediate result which we will also need:
\begin{theorem}[{\cite[Theorem 3.1]{MR759265}}]
  \label{thm:handleattach} If
  $M$ is a {\lp}normally oriented{\rp} hypersurface of positive mean
  curvature in a Riemannian manifold $\Omega$ of dimension $n$,
  and if $M'\subset\Omega$ is
  obtained from $M$ by attaching a $p$-handle to the positive side
  {\lp}the side of increasing area{\rp} of
  $M$, then if $n-p\ge 2$, one can arrange
  {\lp}without changing the metric on $\Omega${\rp} for $M'$ also to have
  positive mean curvature, to be as close as one wants to $M$,
  and to agree with $M$ away from a small neighborhood of the
  $S^{p-1}\hookrightarrow M$ where the handle is attached.
\end{theorem}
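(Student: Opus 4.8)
The plan is to make the modification local: supported near the attaching sphere $S:=S^{p-1}\subset M$ and the core $p$-disk $D$ of the handle, with $M'$ agreeing with $M$ outside a small neighborhood of $S$ in $\Omega$. First I would note that the hypothesis is stable: by the first variation of area, $H>0$ persists under $C^2$-small isotopies of $M$, and $h_0:=\min_S H_M>0$ by compactness, so a preliminary small isotopy is harmless. Then, in coordinates near a point of $S$ in which the ambient metric is $C^2$-close to a product $ds^2_S+|dy|^2+dt^2$ on $S\times D^{n-p}(\bar\epsilon)\times(-\epsilon,\epsilon)$, with $M=\{t=0\}$, the positive side $\{t>0\}$, $r:=|y|$, and linking spheres $S^{n-p-1}(\rho)=\{|y|=\rho\}$: over the tubular neighborhood of $S$ in $M$ I would build $M'$ as the surface of revolution $M_\gamma:=\{(\theta,y,t):(|y|,t)\in\gamma\}$ of a profile curve $\gamma$ in the quadrant $\{r\ge0,\ t\ge0\}$, then continue $M_\gamma$ as the boundary sphere-bundle of a thin tubular neighborhood $\nu(D)\cong D^p\times D^{n-p}(r_0)$ of the core. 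This realizes $M'$ as the surgery of $M$ along $S$ determined by the handle attachment, with the corner locus $S\times S^{n-p-1}(r_0)$ smoothed.

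The computational core is the mean curvature of $M_\gamma$, and it is the direct first-order analogue of the Gromov--Lawson computation. Parametrizing $\gamma$ by arclength and taking the unit normal to point to the positive side, a calculation in the product model --- the error from the true metric being $o(1)$ as $\bar\epsilon,\epsilon\to0$ --- shows that $n-p-1$ of the principal curvatures of $M_\gamma$ (those along the shrunken linking sphere $S^{n-p-1}(r)$) equal $t'(\sigma)/r(\sigma)$, one (along $\gamma$) equals $-k_\gamma(\sigma)$ with $k_\gamma$ the signed curvature of $\gamma$, and the remaining $p-1$ (along the totally geodesic factor $S$) together with the ambient terms assemble into a quantity $H^{\mathrm{eff}}$ with $H^{\mathrm{eff}}\to H_M\ge h_0$ as $\gamma$ collapses onto $\{t=0\}$; thus
\[
H_{M_\gamma}=H^{\mathrm{eff}}+(n-p-1)\frac{t'}{r}-k_\gamma+o(1).
\]
I would take $\gamma$ equal to the $r$-axis for $r\ge r_1$ (so $M_\gamma$ matches $M$) and, on $[r_0,r_1]$, bending monotonically from horizontal ($\phi=0$) to vertical ($\phi=\pi/2$), $\phi$ being the angle of the tangent above the $r$-axis, so $t'=\sin\phi$ and $k_\gamma=d\phi/d\sigma=-\cos\phi\,(d\phi/dr)$. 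Keeping $H_{M_\gamma}>0$ along the bend then reduces to the differential inequality
\[
-\frac{d\phi}{dr}\ \le\ \frac{(n-p-1)\sin\phi/r+h_0/2}{\cos\phi}\,,
\]
whose right side blows up as $\phi\to\pi/2$ and, crucially, as $r\to0$ provided $n-p-1\ge1$; integrating from $(r_1,0)$ one can reach $\phi=\pi/2$ by $r=r_0$ once $(r_1/r_0)^{n-p-1}$ is large enough (which constrains $r_0$ in terms of $r_1$, $h_0$, $n-p$). On the vertical segment, $M_\gamma$ is a thin tube $S\times S^{n-p-1}(r_0)\times I$ with $H_{M_\gamma}\approx(n-p-1)/r_0$; its continuation $D^p\times S^{n-p-1}(r_0)$ over $\nu(D)$ has mean curvature $(n-p-1)/r_0+O_D(1)$, $O_D(1)$ depending only on the fixed geometry of $D\subset\Omega$, so a further shrinking of $r_0$ makes it positive, and the corners are rounded by bends of the same gentle type.

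The main obstacle is exactly this profile-curve step --- producing a curve that performs the full quarter-turn from the level of $M$ to the thin vertical tube while obeying the pointwise mean-curvature inequality --- and it is here that $n-p\ge2$ enters, in the sharpest possible way: the positive term $(n-p-1)t'/r$ furnished by the shrunken linking sphere $S^{n-p-1}(r)$ grows without bound as $r\to0$ if and only if $n-p-1\ge1$, which is what powers the turn to completion within a bounded radial range; for $n-p=1$ that term vanishes and, when $h_0$ is small, the turn cannot be forced through. This is precisely the first-order analogue of the codimension-$\ge3$ hypothesis in the Gromov--Lawson surgery theorem for positive scalar curvature, where the linking sphere must instead contribute positively to a second-order quantity --- its scalar curvature $(n-p-1)(n-p-2)/r^2$ --- which forces $n-p-1\ge2$. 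The remaining points --- that $M'$ is the asserted handle attachment, agrees with $M$ off a small neighborhood of $S$, and is $C^0$-close to $M$ together with the handle --- are routine once $\nu(D)$ and $r_0$ are taken thin enough.
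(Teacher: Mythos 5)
The paper does not actually prove this statement: it is quoted from Michelsohn--Lawson \cite[Theorem 3.1]{MR759265} and used as a black box, so there is no internal proof to compare against. Your reconstruction is, in substance, the original argument: realize $M'$ as $M$ with the tube $D^p\times S^{n-p-1}(r_0)$ about the core disk spliced in, join the two pieces by a hypersurface of revolution over a profile curve $\gamma$, and observe that the $n-p-1$ principal curvatures coming from the shrunken linking sphere contribute $(n-p-1)\sin\phi/r$, which blows up as $r\to 0$ precisely when $n-p-1\ge 1$ and pays for the bending cost $-k_\gamma$. This is the exact first-order analogue of the computation the paper does carry out in Section \ref{sec:GL} for the doubling theorem, where the profile is a semicircle and the payoff term is $2\cos\theta\,H_{\p X_1}/\varepsilon$; you have also correctly located where $n-p\ge 2$ enters and why the positive-scalar-curvature analogue needs codimension $\ge 3$ instead. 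One point to tighten: your differential inequality uses the unweighted bound $H^{\mathrm{eff}}\ge h_0/2$ along the entire bend, but (as in the paper's $S_{\p N}(w_j)\approx\cos\theta\,\mu_j w_j$) the original principal curvatures of $M$ enter the second fundamental form of $M_\gamma$ only with weight $\cos\phi$, while the $O(1)$ terms coming from the extrinsic geometry of $S^{p-1}\hookrightarrow M$ and of the core disk enter with weight $\sin\phi$ and must themselves be absorbed by $(n-p-1)\sin\phi/r$ --- which again works only because $n-p-1\ge 1$ and $r\le r_1$ is small. The corrected inequality $k_\gamma\le \cos\phi\,h_0/2+\sin\phi\,(n-p-1)/(2r)$ still integrates from $\phi=0$ at $r=r_1$ to $\phi=\pi/2$ at a positive radius $r_0$, so this is a bookkeeping repair, not a gap in the mechanism; your proof stands.
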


\begin{remark}
\label{rem:signconv}
One should note that there are two different (and conflicting!)
sign conventions in use
for mean curvature, so that the condition $H>0$ along the boundary $\p X$
with respect to the outward-pointing normal in Theorems \ref{thm:GLdoubling}
and \ref{thm:MichelsohnLawson} is what is called in other papers
(such as \cite{MR793350,BH}) $H>0$ with respect to the
\emph{inward}-pointing normal.
We will stick with the terminology in \cite{MR569070}, which means that
close hypersurfaces parallel to the boundary in the interior have
\emph{smaller} volume than the boundary, and that volume grows
as one moves \emph{outward}.  This condition is sometime called
``strictly mean convex boundary.''
\end{remark}

Other proofs of Theorem \ref{thm:GLdoubling} can be found in
\cite[Theorem 1.1]{MR793350} and in \cite[Corollary 34]{BH}.
These two references make it clear that one can weaken the condition
$H>0$ to $H\ge 0$.  However, as Christian B{\"a}r pointed out to
us, the theorem fails if one replaces $H>0$ by $H<0$. He kindly
provided us with the following simple counterexample.  Let $X$
be $S^2$ with three open disks removed, where each disk
fits within a single hemisphere.  (See Figure \ref{fig:3hole}.)
With the restriction of the
standard metric on $S^2$, $X$ has positive curvature (in fact $K=1$)
and the mean curvature of each boundary circle is \emph{negative}, as
parallel circles slightly inward from the boundary components have
\emph{bigger} length.  But the double of $X$ is a surface of genus $2$,
which cannot have nonnegative scalar curvature (by Gauss-Bonnet).
The example can be jacked up to any higher dimension by crossing with
a torus.

\begin{figure}[hbt]
  \begin{center}
    \includegraphics[width=5cm]{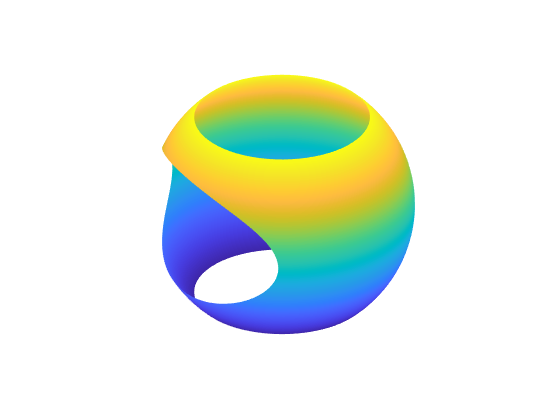}
  \end{center}
  \caption{\label{fig:3hole} A $3$-holed sphere}
\end{figure}

Many years ago, we began studying
whether there might be a sort of converse to Theorem \ref{thm:GLdoubling}.
Recently, Christian B{\"a}r and Bernhard Hanke \cite{BH} have
made a comprehensive study of boundary conditions (mostly involving
mean curvature) for scalar curvature properties of compact manifolds
with non-empty boundary, and this makes it possible to reexamine the
question of a possible converse to Theorem \ref{thm:GLdoubling},
which we have formulated as Conjecture \ref{conj:doubling},
the ``Doubling Conjecture.''
That is the principal subject of this paper.  Our main results on this
question are Theorems \ref{thm:sc}, \ref{thm:scnonspin},
\ref{thm:gencase}, and \ref{thm:gencasenonspin}.

Another closely related problem which we also study
(in Section \ref{sec:obstr}) is the question
of when a compact manifold with boundary admits a {\psc} metric
which is a product metric in a neighborhood of the boundary.
We show that in optimal situations (depending on the fundamental
groups and whether or not things are spin and of high enough dimension)
it is possible to give necessary and sufficient conditions for this to
happen.

Since the proof of Theorem \ref{thm:GLdoubling} in \cite{MR569070} is a bit
sketchy and it seems some of the formulas there are not
completely correct, we have redone the proof in Section
\ref{sec:GL}.

We would like to thank the referee and Rudolf Zeidler for helpful
corrections to the first draft of this paper.

\section{The Relatively $1$-Connected Case}
\label{sec:reloneconn}
Because of Theorem \ref{thm:MichelsohnLawson}, the simplest case to
deal with is the one where $X$ and $\p X$ are connected
and the inclusion $\p X\hookrightarrow X$ induces an isomorphism on
$\pi_1$.  This case should philosophically be viewed as an
analogue of Wall's ``$\pi$-$\pi$ Theorem'' \cite[Theorem 3.3]{MR1687388},
which says that relative surgery problems are unobstructed if
$X$ and $\p X$ are connected
and the inclusion $\p X\hookrightarrow X$ induces an isomorphism on
$\pi_1$. 
\begin{theorem}
\label{thm:reloneconn}
Let $X$ be a connected compact spin manifold with boundary,
of dimension $n\ge 6$,
with connected boundary $\p X$, and such that the inclusion
$\p X\hookrightarrow X$ induces an isomorphism on $\pi_1$.
Let $M=\Dbl(X,\p X)$ denote the double of $X$
along its boundary, which is a closed spin manifold of dimension $n$.
Then the following statements are always true:
\begin{enumerate}
\item $M$ admits a metric of \psc.
\item $\p X$ admits a metric of \psc.
\item $X$ admits a {\psc} metric which is a product metric in a
  collar neighborhood of $\p X$.
\item $X$ admits a {\psc} metric which gives $\p X$ positive mean curvature
  with respect to the interior normal.
\item $X$ admits a {\psc} metric for which $\p X$ is minimal
  {\lp}i.e., has vanishing mean curvature{\rp}.
\item $X$ admits a {\psc} metric which gives $\p X$ positive mean curvature
  with respect to the outward normal.  
\end{enumerate}
\end{theorem}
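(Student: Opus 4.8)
The plan is to notice that the six conclusions are not independent: I would show that (3) implies each of (1), (2), (4), (5), and (6), and then prove (3) by a surgery argument that is the scalar‑curvature analogue of the proof of Wall's $\pi$‑$\pi$ theorem.

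First the easy implications. Suppose $X$ carries a \psc\ metric that is a product $h+dr^2$ on a collar $\partial X\times[0,\epsilon)$, with $\partial X=\partial X\times\{0\}$. Restriction to the boundary slice gives a \psc\ metric on $\partial X$, which is (2); the slices $\partial X\times\{r\}$ are totally geodesic, so $\partial X$ is minimal and (5) holds; doubling across $\partial X$ produces a metric on $M=\Dbl(X,\partial X)$ equal to $h+dr^2$ near the doubling locus, hence smooth and of \psc\ everywhere, which is (1). Finally, on a sub‑collar replace $h+dr^2$ by a warped product $\psi(r)^2h+dr^2$ with $\psi\equiv 1$ near the inner end and $\psi'(0)\neq 0$ but $|\psi'|,|\psi''|$ small; the scalar curvature stays positive, and $\partial X$ acquires positive mean curvature with respect to the outward normal when $\psi'(0)<0$ and with respect to the interior normal when $\psi'(0)>0$, giving (6) and (4). (Alternatively, (6) is immediate from Theorem~\ref{thm:MichelsohnLawson}, since $\pi_1(\partial X)\xrightarrow{\cong}\pi_1(X)$ forces $\pi_1(X,\partial X)=*$, and then (1) follows from (6) via Theorem~\ref{thm:GLdoubling}.) So everything reduces to (3).

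To prove (3): \emph{Step 1.} Since $X$ is spin and $n\ge 6$, a finite sequence of surgeries on $2$‑spheres in $\operatorname{int}X$ (each with trivial normal bundle, killing in turn the finitely many generators of $\pi_2(X,\partial X)$ coming from the $2$‑handles of a handle decomposition rel $\partial X\times[0,1]$ with no handles of index $<2$) produces a compact spin $X'$ with $\partial X'=\partial X$, with $\pi_1(\partial X')\xrightarrow{\cong}\pi_1(X')$ still an isomorphism, and with $\partial X'\hookrightarrow X'$ now a $2$‑connected map. These surgeries are interior of codimension $n-2\ge 4$, with reverse surgeries of codimension $3$, so by the Gromov--Lawson surgery theorem applied away from the boundary, a \psc\ metric on $X'$ that is a product near $\partial X'$ transports back to such a metric on $X$; hence we may assume $\partial X\hookrightarrow X$ is $2$‑connected. \emph{Step 2.} Handle trading (legitimate for $n\ge 6$, as in the $s$‑cobordism theorem) then writes $X$ as $\partial X\times[0,1]$ with handles of index $\ge 3$ attached; dualizing the cobordism, $X$ is built from a single $n$‑ball $D^n$ by attaching handles of index $1,2,\dots,n-3$. \emph{Step 3.} Give $D^n$ a torpedo metric --- a \psc\ metric that is a product near $\partial D^n$. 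Attaching an index‑$p$ handle with $p\le n-3$ along an embedded $S^{p-1}$ in the current boundary is a surgery of codimension $n-p\ge 3$ in that $(n-1)$‑dimensional boundary, so by the cobordism form of the Gromov--Lawson surgery theorem the handle‑attaching cobordism carries a \psc\ metric that is a product at both ends; these glue successively onto the product‑near‑boundary \psc\ metric built so far, and over all handles produce a \psc\ metric on $X$ that is a product near $\partial X$, which is (3).

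I expect Step 1 to be the main obstacle: it is exactly there that the relative $1$‑connectedness and the hypothesis $n\ge 6$ are used essentially, to push the handle decomposition into the codimension‑$\ge 3$ regime where the Gromov--Lawson technique works --- the role played by killing the surgery kernel in a $\pi$‑$\pi$ situation. Steps 2 and 3 are routine given that machinery; the only points requiring a little care are that each Gromov--Lawson modification accepts as input whatever \psc\ metric the previous stage left on the moving boundary (it does, the surgery theorem applies to an arbitrary \psc\ metric), and, in the easy implications, that the warping producing (4) and (6) can be chosen $C^2$‑small, which it can since only a slight bending of a product collar is needed.
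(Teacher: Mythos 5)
Your proof is correct, and the surgery argument you give for (3) --- interior surgery on $2$-spheres to kill $\pi_2(X,\partial X)$ (legitimate because the $\pi$-$\pi$ hypothesis makes $\partial\colon\pi_2(X,\partial X)\to\pi_1(\partial X)$ zero, so generators lift to $\pi_2(X)$, and the spin condition with $n\geq 6$ supplies embedded representatives with trivial normal bundle), handle-trading to index $\geq 3$ rel $\partial X$, dualizing, and pushing a torpedo metric across $X\smallsetminus\mathring D^n$ --- is exactly the machinery standing behind the paper's appeal to the Bordism Theorem; you are in fact a bit more explicit than the paper at this point, whose one-line remark that the bordism ``can be decomposed into surgeries of codimension $\geq 3$'' tacitly incorporates your Step~1 (the $2$-connectedness modification, which the paper only writes out in the proof of Theorem~\ref{thm:sc}). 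Where you genuinely differ is in the logical organization: the paper proves (6) directly from Theorem~\ref{thm:MichelsohnLawson} (its hypothesis $\pi_1(X,\partial X)=*$ being exactly the $\pi$-$\pi$ condition), then gets (1) from (6) via Theorem~\ref{thm:GLdoubling}, obtains (2) and (3) from the Bordism Theorem, reads (5) off the reflection symmetry of the doubled metric, and cites \cite[Corollary 34]{BH} for (4); you instead prove (3) and deduce the other five from it, with your warped-product bending of the collar serving as a self-contained elementary replacement for the Michelsohn--Lawson and B\"ar--Hanke citations that yield (6) and (4), and making transparent how the sign of $\psi'(0)$ selects the normal along which $H>0$. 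Both routes have the surgery argument as the real work; the paper's makes (1) and (6) visible without any surgery at all, while yours has everything flow from (3).
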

\begin{proof}
  (6) is the conclusion of Theorem \ref{thm:MichelsohnLawson}, and (1)
  then follows by Theorem \ref{thm:GLdoubling}.  To prove (2), observe
  that if $\pi=\pi_1(\p X)$, then the classifying map
  $c\co \p X\to B\pi$ extends to a classifying map $\bar c$
  for $X$ because of the fundamental group assumption, and so
  $(\p X, c)$ bounds $(X, \bar c)$ and the class of $c\co \p X\to B\pi$
  in $\Omega_{n-1}^\spin(B\pi)$ vanishes.  Then by the Bordism Theorem
  (\cite[Proposition 2.3]{MR866507} or
  Theorems 4.1 and 4.11 in \cite{MR1818778}), (2) holds.  In fact,
  the proof of the Bordism Theorem also yields (3), because
  the bordism over $B\pi$ from $S^{n-1}$ to $\p X$ obtained by punching out a
  disk from $X$ can be decomposed into surgeries of codimension
  $\ge 3$, and then the Surgery Theorem \cite[Theorem A]{MR577131}
  makes it possible to ``push'' a standard {\psc} metric on the $n$-disk
  that is a product metric near the boundary across the bordism to a
  {\psc} metric on $X$ that is a product metric near $\p X$.

  By the construction in Theorem \ref{thm:GLdoubling}, either in
  \cite{MR569070} or in Section \ref{sec:GL}, $M$ has a {\psc}
  metric which is symmetric with respect to reflection across $\p X$.
  So this metric is what B{\"a}r and Hanke call a ``doubling
  metric'' in \cite{BH}.  This metric necessarily has vanishing
  second fundamental form on $\p X$, so (5) holds.  (Alternatively,
  (5) trivially follows from (3).)  By
  \cite[Corollary 34]{BH}, (4) holds as well.  So we have
  shown that all the conditions hold.
\end{proof}

If one keeps the condition that $\p X$ is connected and the
condition that $\pi_1(\p X)\to \pi_1(X)$ is surjective,  but drops
the condition that $\pi_1(\p X)\to \pi_1(X)$ is injective,
then the theorem has to be modified as follows.

\begin{theorem}
\label{thm:reloneconn1}
Let $X$ be a connected compact manifold with boundary,
of dimension $n\ge 6$,
with connected boundary $\p X$, and such that the inclusion
$\p X\hookrightarrow X$ induces a surjection on $\pi_1$.
Let $M=\Dbl(X,\p X)$ denote the double of $X$
along its boundary, which is a closed manifold of dimension $n$.
Then the following statements are always true:
\begin{enumerate}
\item $M$ admits a metric of \psc.
\item $X$ admits a {\psc} metric which gives $\p X$ positive mean curvature
  with respect to the outward normal.  
\item $X$ admits a {\psc} metric for which $\p X$ is minimal
  {\lp}i.e., has vanishing mean curvature{\rp}.
\item $X$ admits a {\psc} metric for which $\p X$ is totally geodesic
  {\lp}i.e., has vanishing second fundamental form{\rp}.
\end{enumerate}
\end{theorem}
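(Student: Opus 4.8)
The plan is to extract this theorem from Theorems~\ref{thm:MichelsohnLawson} and~\ref{thm:GLdoubling} in exactly the way parts~(1), (5), (6) of Theorem~\ref{thm:reloneconn} were obtained; the point is simply that those parts of that proof used only the surjectivity of $\pi_1(\p X)\to\pi_1(X)$ and the dimension bound, never the spin hypothesis or the injectivity of $\pi_1(\p X)\to\pi_1(X)$, and that the proof of~(5) there in fact produces a totally geodesic boundary, which is our~(4). First I would record that, with $X$ and $\p X$ connected, the hypothesis that $\pi_1(\p X)\to\pi_1(X)$ be surjective is \emph{equivalent} to $\pi_1(X,\p X)=*$: in the homotopy exact sequence of the pair based at a point of $\p X$, connectedness of $\p X$ forces $\pi_1(X,\p X)$ to coincide with the image of $\pi_1(X)\to\pi_1(X,\p X)$, and that image is a single point precisely when $\pi_1(\p X)\to\pi_1(X)$ is onto. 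Since $n\ge 6$, in particular $n\ne 4$, Theorem~\ref{thm:MichelsohnLawson} then yields a metric on $X$ of positive sectional curvature---a fortiori of \psc---for which $\p X$ has positive mean curvature with respect to the outward normal, which is~(2); feeding this metric into Theorem~\ref{thm:GLdoubling} gives~(1), that $M=\Dbl(X,\p X)$ admits a \psc\ metric.

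For~(4) and~(3) I would invoke the sharper form of the doubling construction carried out in Section~\ref{sec:GL} (already used in the proof of Theorem~\ref{thm:reloneconn}): applied to the metric just constructed, it returns a \psc\ metric on $M$ that is invariant under the reflection $\tau$ interchanging the two copies of $X$---a ``doubling metric'' in the terminology of \cite{BH}. Its restriction to one copy of $X$ is again a \psc\ metric, and since $\p X$ is the fixed-point set of the isometric involution $\tau$ it is a totally geodesic hypersurface of $M$, hence of $X$; this is~(4). Statement~(3) follows at once, a totally geodesic hypersurface having identically vanishing mean curvature.

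There is no genuine obstacle in any of this; the one spot deserving care is the step behind~(4), where one must know that the construction of Section~\ref{sec:GL} really does yield a metric that is smooth and \emph{even} in the normal direction across the gluing locus, so that $\tau$ is literally an isometry and therefore pins down not merely the mean curvature but the entire second fundamental form of $\p X$. I would also point out why the present list is necessarily shorter than that of Theorem~\ref{thm:reloneconn}: the conclusions there that $\p X$ admits \psc, and that $X$ carries a \psc\ metric which is a product near $\p X$, would each produce a \psc\ metric on $\p X$ (for the product collar, a cross-section of $dt^2+h$ with $h$ a metric on $\p X$), and this can fail once $\pi_1(\p X)\to\pi_1(X)$ is no longer injective---the argument in the proof of Theorem~\ref{thm:reloneconn}(2) that $\p X$ bounds over $B\pi_1(\p X)$, hence is \psc, breaks down because a classifying map of $\p X$ need no longer extend over $X$.
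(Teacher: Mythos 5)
Your proposal is correct and follows essentially the same route as the paper: (2) from Theorem~\ref{thm:MichelsohnLawson} via $\pi_1(X,\p X)=*$, then (1) from Theorem~\ref{thm:GLdoubling}. For (3)--(4) the paper simply cites \cite[Corollary 34]{BH}, whereas you argue directly from the reflection-symmetric doubling metric of Section~\ref{sec:GL}; this is the same argument the paper itself uses for item (5) of Theorem~\ref{thm:reloneconn}, so the difference is only one of packaging.
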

\begin{proof}
  Again, (2) follows from Theorem \ref{thm:MichelsohnLawson} and then
  (1) follows from Theorem \ref{thm:GLdoubling}. \cite[Corollary 34]{BH}
  shows that (3) and (4) then follow.
\end{proof}
Note that under the hypotheses of Theorem \ref{thm:reloneconn1}, it is not
necessary true that $\p X$ admits {\psc}, and so in general
$X$ does not have a {\psc} metric which is a product metric in a
neighborhood of the boundary.  A counterexample is given at the
beginning of Section \ref{sec:gencase}.

For the results of Theorem \ref{thm:reloneconn}, the spin restriction is not
really necessary, but the case where $X$ is not spin but has a spin cover
gets messy.  For this reason, it's convenient to make the following
definition.
\begin{definition}
\label{def:totnspin}
If $X$ is a manifold (with or without boundary), we say it is
\emph{totally non-spin} if the second Stiefel-Whitney class $w_2$
of $X$ is non-zero on the image of the Hurewicz map
$\pi_2(X)\to H_2(X,\bZ)$.  This is equivalent to saying that the
universal cover of $X$ does not admit a spin structure.
\end{definition}
The following result is an example of a modification
to the totally non-spin case.
\begin{theorem}
\label{thm:reloneconnnonspin}
Let $X$ be a connected compact oriented manifold with boundary,
of dimension $n\ge 6$, 
with connected boundary $\p X$, such that $X$ and $\p X$
are totally non-spin, and such that the inclusion
$\p X\hookrightarrow X$ induces an isomorphism on $\pi_1$.
Let $M=\Dbl(X,\p X)$ denote the double of $X$
along its boundary, which is a closed oriented manifold of dimension $n$.
Then the following statements are always true:
\begin{enumerate}
\item $M$ admits a metric of \psc.
\item $\p X$ admits a metric of \psc.
\item $X$ admits a {\psc} metric which is a product metric in a
  collar neighborhood of $\p X$.
\item $X$ admits a {\psc} metric which gives $\p X$ positive mean curvature
  with respect to the interior normal.
\item $X$ admits a {\psc} metric for which $\p X$ is minimal
  {\lp}i.e., has vanishing mean curvature{\rp}.
\item $X$ admits a {\psc} metric which gives $\p X$ positive mean curvature
  with respect to the outward normal.  
\end{enumerate}
\end{theorem}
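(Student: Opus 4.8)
The proof will run parallel to that of Theorem~\ref{thm:reloneconn}; the point is to locate where the spin hypothesis was used there and to check that the totally non-spin hypotheses on $X$ \emph{and} on $\p X$ serve as a substitute. Items (6) and (1) require no change: since $\p X\hookrightarrow X$ is a $\pi_1$-isomorphism, $\pi_1(X,\p X)=*$, so Theorem~\ref{thm:MichelsohnLawson} (applicable because $n\ge 6\ne 4$) equips $X$ with a metric of positive sectional --- hence positive scalar --- curvature having positive mean curvature with respect to the outward normal, which is (6); feeding it into Theorem~\ref{thm:GLdoubling} gives (1). And once (3) is proved, (5) is immediate (a metric that is a product in a collar makes $\p X$ totally geodesic, in particular minimal), (2) follows by restricting such a metric to the collar, and (4) follows from \cite[Corollary~34]{BH} exactly as before. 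So the whole content is (3): that $X$ carries a {\psc} metric which is a product metric in a collar of $\p X$.

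Put $\pi=\pi_1(\p X)\cong\pi_1(X)$ and choose a map $\bar c\co X\to B\pi$ extending a classifying map $c\co\p X\to B\pi$, so that $(X,\bar c)$ is a null-bordism of $(\p X,c)$ and $[\p X,c]=0$ in $\Omega_{n-1}^{SO}(B\pi)$. Mimicking the proof of the Bordism Theorem, I first simplify $X$ by surgery on embedded $2$-spheres in its interior so as to arrange $\pi_2(X,\p X)=0$; only $2$-spheres are needed because $\pi_1(\p X)\to\pi_1(X)$ is already an isomorphism, and since $n\ge 6$ every class of $\pi_2(X,\p X)=\coker\bigl(\pi_2(\p X)\to\pi_2(X)\bigr)$ is represented by an embedded $2$-sphere $S$ in the interior of $X$. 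The key point, replacing the spin hypothesis of Theorem~\ref{thm:reloneconn}, is that such an $S$ may be chosen with trivial normal bundle: its normal bundle has rank $n-2\ge 4$ over an oriented base in an oriented manifold, so is classified by $\langle w_2(TX),[S]\rangle\in\bZ/2$; and because $\p X$ is totally non-spin while $TX|_{\p X}=T\p X\oplus\bR$, the homomorphism $\pi_2(\p X)\to\bZ/2$ obtained by pushing forward into $\pi_2(X)$ (hence into $H_2(X;\bZ/2)$) and pairing with $w_2(TX)$ coincides with the corresponding homomorphism for $T\p X$ and is therefore nonzero. Hence one may add to any class of $\pi_2(X)$ a suitable element pushed forward from $\pi_2(\p X)$ to make $\langle w_2(TX),[S]\rangle$ vanish, without changing its image in $\pi_2(X,\p X)$ or (again because $n\ge 6$) the possibility of realizing it by an embedding. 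It is precisely here that the ``spin universal cover but non-spin $X$'' case would fail, since there $w_2(TX)$ pairs trivially with $\pi_2(X)$ and offers no leverage. After finitely many such surgeries --- $\pi_2(X,\p X)$ is finitely generated over $\bZ[\pi]$, each surgery kills the submodule generated by the class operated on, and the homomorphism above persists because a collar of $\p X$ is untouched --- we obtain $X'$ with $\p X'=\p X$, $\pi_1(X')\cong\pi$, and $\pi_2(X',\p X)=0$; moreover each of these surgeries, together with its reverse, is of codimension $\ge 3$ (surgery on $S^2$ in an $n$-manifold with $n\ge 6$, reversed by surgery on the resulting $S^{n-3}$) and takes place away from $\p X$.

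Now delete an open $n$-disk from the interior of $X'$; the resulting cobordism $W$ from $S^{n-1}$ to $\p X$ satisfies $\pi_i(W,\p X)=\pi_i(X',\p X)=0$ for $i\le 2$, so by standard handle theory (using $n\ge 6$) it is built from $S^{n-1}\times I$ by attaching handles of index $\le n-3$, i.e.\ by a sequence of surgeries of codimension $\ge 3$ compatible with the reference map to $B\pi$. By the Surgery Theorem (\cite[Theorem~A]{MR577131}, in the product-near-boundary formulation used for the Bordism Theorem) one may push the round metric on $S^{n-1}$ across $W$ to a {\psc} metric on $W$ that is a product metric near both boundary components; capping off the $S^{n-1}$ end with a rotationally symmetric {\psc} metric on $D^n$ yields a {\psc} metric on $X'$ which is a product metric in a collar of $\p X$. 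Finally, undoing the interior surgeries one at a time --- each reverse surgery is of codimension $\ge 3$ and is performed away from $\p X$, so the Surgery Theorem applies once more --- transports this metric to a {\psc} metric on $X$ that is a product metric in a collar of $\p X$. This proves (3), and with it all of (1)--(6). The step requiring the most care in a full write-up is the interior simplification of $X$: one must verify that the requisite finitely many $2$-sphere surgeries can be made simultaneously $B\pi$-compatible, trivially framed via the totally non-spin mechanism above, and effective in reducing $\pi_2(X,\p X)$, and that passing back and forth between $X$ and $X'$ does not affect the existence of a product-near-boundary {\psc} metric.
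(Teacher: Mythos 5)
Your proposal is correct and follows essentially the same route as the paper, whose proof of this theorem is simply the instruction to rerun the proof of Theorem~\ref{thm:reloneconn} with oriented bordism in place of spin bordism; the extra detail you supply --- using the totally non-spin hypothesis on $\p X$ to adjust interior $2$-sphere classes by classes from $\pi_2(\p X)$ so as to kill $\langle w_2(TX),\cdot\rangle$ without changing the image in $\pi_2(X,\p X)$ --- is exactly the mechanism the paper spells out later in the proof of Theorem~\ref{thm:relbordismnonspin}. Your rearrangement of the implications (deriving (2) and (5) from (3)) is harmless and is even noted as an alternative in the paper.
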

\begin{proof}
  This is exactly like the proof of Theorem \ref{thm:reloneconn}, with
  spin bordism replaced by oriented bordism and with
  \cite[Proposition 2.3]{MR866507} replaced by \cite[Theorem 2.13]{MR866507}.
\end{proof}

\section{The Simply Connected Case}
\label{sec:sc}

Next, we consider the case where $X$ and all the components
of $\p X$ are simply connected, but there can be multiple
boundary components.
\begin{theorem}
\label{thm:sc}
Let $X$ be a simply connected compact spin manifold with non-empty boundary,
with $n=\dim X\ge 6$. Note that $\p X$ can have any number $k$ of boundary
components, $\p_1 X,\cdots, \p_k X$.
Suppose all components of $\p X$ are simply connected.
Let $M=\Dbl(X,\p X)$ denote the double of $X$
along its boundary, which is a closed spin manifold of dimension $n$.
Then the following are equivalent:
\begin{enumerate}
\item $M$ admits a metric of \psc.
\item All components $\p_j X$ of $\p X$ admit metrics of \psc.
\item For each $j=1,\cdots, k$, the $\alpha$-invariant
  $\alpha(\p_j X)\in ko_{n-1}$ vanishes.
\item $X$ admits a {\psc} metric which is a product metric in a
  collar neighborhood of $\p X$.
\item $X$ admits a {\psc} metric for which $\p X$ is minimal
  {\lp}i.e., has vanishing mean curvature{\rp}.
\item $X$ admits a {\psc} metric which gives $\p X$ positive mean curvature
  with respect to the outward normal.  
\end{enumerate}
\end{theorem}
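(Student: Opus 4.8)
The plan is to establish the cycle of implications $(1)\Leftrightarrow(2)\Leftrightarrow(3)$ first, then show $(4)\Rightarrow(5)\Rightarrow(6)\Rightarrow(1)$ and close the loop with $(3)\Rightarrow(4)$. Since $X$ is simply connected and spin, the equivalence $(2)\Leftrightarrow(3)$ for each component $\p_j X$ is just the Gromov--Lawson--Rosenberg theorem in the simply connected case (Stolz's theorem): a simply connected closed spin manifold of dimension $n-1\ge 5$ admits a \psc{} metric if and only if its $\alpha$-invariant in $ko_{n-1}$ vanishes. (The boundary components have dimension $n-1\ge 5$, so we are safely in the range where Stolz applies.) The implications $(4)\Rightarrow(5)\Rightarrow(6)$ are the soft ones: a product metric near $\p X$ makes $\p X$ totally geodesic, hence minimal, and a minimal boundary has $H=0$, which we may perturb to $H>0$ with respect to the outward normal by the usual bending trick, or simply invoke \cite[Corollary 34]{BH} as in the previous theorems. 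Then $(6)\Rightarrow(1)$ is Theorem \ref{thm:GLdoubling}.

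The substantive implication is $(1)\Rightarrow(2)$, and its dual $(3)\Rightarrow(4)$. For $(1)\Rightarrow(2)$: suppose $M=\Dbl(X,\p X)$ admits \psc. Each $\p_j X$ is embedded in $M$ with trivial normal bundle, and in fact $M$ decomposes as $X\cup_{\p X}\bar X$. The first idea is to use a codimension-one separating hypersurface and the stable minimal hypersurface / $\mu$-bubble obstruction, but a cleaner route in this simply connected spin setting is bordism-theoretic. Because $X$ is a spin nullbordism of $\p X=\coprod_j \p_j X$, the class $\sum_j[\p_j X]$ vanishes in $\Omega_{n-1}^{\spin}$, hence $\sum_j \alpha(\p_j X)=0$ in $ko_{n-1}$. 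This alone does not give vanishing of each individual $\alpha(\p_j X)$, so the real work is to show that \psc{} on $M$ forces each summand to vanish separately. For this I would argue as follows: if $M$ has \psc, then since $M$ is simply connected and spin of dimension $n\ge 6$, we have $\alpha(M)=0$; but more is needed. I would instead directly construct, for each fixed $j$, a \psc{} metric on $\p_j X$ by a minimal-hypersurface or bordism argument. Concretely, double only along $\p_j X$: the manifold $X$ with a half-collar reflected across $\p_j X$ alone is bordant (rel nothing) in a way that isolates $[\p_j X]$; alternatively, observe that $\p_j X$ bounds the compact spin manifold obtained from $X$ by capping off all the \emph{other} boundary components with \psc{} fillings --- but those other components may not bound. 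The honest fix: from \psc{} on $M$, apply the separating-hypersurface argument of Gromov--Lawson / Schoen--Yau (or Hanke--Pape--Schick in the spin-index version) to conclude that a hypersurface isotopic to $\p_j X$ in $M$ carries \psc, and since $\p_j X$ is simply connected this hypersurface is \psc-cobordant to $\p_j X$, giving (2).

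For $(3)\Rightarrow(4)$: assume each $\alpha(\p_j X)=0$. By Stolz, each $\p_j X$ admits a \psc{} metric $g_j$. Now run the relative surgery / bordism machine: since $X$ is simply connected and spin with $n\ge 6$, the trace of punching out a disk from $X$ expresses $X$ (rel $\p X$) as built from $\coprod_j \p_j X\times[0,1]$ by attaching handles of codimension $\ge 3$ --- here one uses that $X\setminus(\text{collar of }\p X)$ is a cobordism from $\coprod_j \p_j X$ to $S^{n-1}$ (or to $\emptyset$ after capping the sphere with a disk) admitting a handle decomposition with all handles of index $\le n-3$, because $X$ and $\p X$ are $1$-connected so $2$-handles and below can be traded. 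Feeding the \psc{} metrics $g_j$ (products near the ends) through the Gromov--Lawson--Schoen--Yau surgery theorem \cite[Theorem A]{MR577131} then produces a \psc{} metric on $X$ that is a product near $\p X$, which is (4).

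The main obstacle is the ``separation'' issue in $(1)\Rightarrow(2)$: passing from \psc{} on the double $M$ to \psc{} on an \emph{individual} boundary component $\p_j X$, rather than just on their disjoint union, requires either a codimension-one minimal-hypersurface argument (which needs a dimension restriction, fine here since $n\ge 6$ but $n-1$ could still be large) or a careful index-theoretic separation; this is the step I expect to occupy most of the proof, and it is presumably where the authors invoke the hypothesis that \emph{all} components of $\p X$ are simply connected so that their $ko$-classes are detected independently and the surgery-theoretic realization can be run componentwise.
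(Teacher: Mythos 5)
Your soft implications---(2)$\Leftrightarrow$(3) via Stolz, and (4)$\Rightarrow$(5)$\Rightarrow$(6)$\Rightarrow$(1) via results from \cite{BH} and Theorem \ref{thm:GLdoubling}---match the paper. You also correctly identify the crux: the spin nullbordance only gives $\sum_j \alpha(\p_j X) = 0$, and the work is to force each $\alpha(\p_j X)$ to vanish individually from \psc{} on $M$. But your proposed remedies for $(1)\Rightarrow(2)$ do not close this gap. The Schoen--Yau minimal-hypersurface technique is dimension-restricted and cannot be used for all $n\ge 6$; the conclusion that ``a hypersurface isotopic to $\p_j X$ in $M$ carries \psc'' is not what the cited methods deliver (index obstructions yield vanishing of a numerical invariant, not a metric on the hypersurface); and Hanke--Pape--Schick is a codimension-\emph{two} obstruction, not directly applicable.

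The structural fact you are missing, which drives the paper's proof of (1)$\Leftrightarrow$(3), is that for $k\ge 2$, $\pi_1(M)\cong F_{k-1}$ by Van Kampen: doubling a simply connected $X$ along $k$ simply connected boundary components produces the fundamental group of a graph with two vertices joined by $k$ edges. Since the Gromov--Lawson--Rosenberg conjecture holds for free groups (\cite[Theorem 4.13]{MR1818778}), \psc{} on $M$ is equivalent to vanishing of $\alpha_\Gamma(M)\in ko_n(BF_{k-1})\cong ko_n\oplus ko_{n-1}^{k-1}$. The $ko_n$ summand is $\alpha(M)$, automatically zero since $M$ bounds; the $ko_{n-1}$ components are exactly the $\alpha(\p_j X)$, computed as $\alpha$-invariants of transverse inverse images of points under maps $M\to S^1$, subject only to the sum-to-zero constraint (which explains the exponent $k-1$ rather than $k$). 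Your sketch of $(3)\Rightarrow(4)$ also needs repair when $k\ge 2$: the cobordism from $\p X$ to $S^{n-1}$ obtained by punching out a disk need not admit a handle decomposition with only handles of index $\le n-3$ without first doing interior surgeries to kill $\pi_2$. The paper instead regards $X$ as a spin cobordism from $\p_1 X\sqcup\cdots\sqcup\p_{k-1}X$ to $-\p_k X$, surgers to make the pair $(W,\p_k X)$ $2$-connected, and then pushes a \psc{} metric across; your variant can be made to work but requires the same interior surgery step, which you omit.
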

\begin{proof}
  If the number $k$ of boundary components is $1$, then
  the pair $(X,\p X)$ is relatively $1$-connected and all six conditions
  hold by Theorem \ref{thm:reloneconn}.
  Thus for most of the proof we can restrict to the case $k\ge 2$.
  We begin by observing that since $X$ and all components of $\p X$
  are simply connected, Van Kampen's Theorem
  implies that the fundamental group of $M$ is the same
  as for the graph $\Gamma$ given by
  $\xymatrix{\bullet\ar@{-}[r]&\bullet\ar@{-}[r]&
    \bullet}$ if $k=1$,
  $\xymatrix@R=0pt{&\bullet\ar@{-}[dr]&\\
    \bullet\ar@{-}[ur]\ar@{-}[dr]&&\bullet\\
    &\bullet\ar@{-}[ur]&}$ if $k=2$,
  $\xymatrix@R=0pt{
    &\overbrace{\bullet}^k\ar@{-}[dr]&\\
    \bullet\ar@{-}[ur]\ar@{-}[dr]\ar@{-}[r]&
               {\mbox{\huge$\vdots$}}\ar@{-}[r]&\bullet\\
    &\bullet\ar@{-}[ur]&}$
  for larger $k$, and is thus the free group $F_{k-1}$ on $k-1$ generators.
  This is a group for which \cite[Theorem 4.13]{MR1818778} applies,
  and thus $M$ has a Riemannian metric of {\psc} if and only if the
  obstruction class $\alpha_\Gamma(M)$ (the image of the $ko$-fundamental
  class of the spin manifold $M$ under the classifying map
  $c\co M\to BF_{k-1}\simeq \Gamma$) vanishes in
  $ko_n(\Gamma)\cong ko_n \oplus ko_{n-1}^{k-1}$.  The component of
  $\alpha_\Gamma(M)$ in $ko_n$ is just the ordinary $\alpha$-invariant of
  $M$, i.e., the image of the fundamental class $[M]\in ko_n(M)$
  under the ``collapse'' map $M\to \pt$.  This always vanishes since
  $M$, being a double of a compact spin manifold with boundary,
  is always a spin boundary (the boundary of the manifold obtained
  by rounding the corners of $X\times I$).  Hence condition (1)
  holds if and only if the component of $\alpha_\Gamma(M)$
  in $ko_{n-1}^{k-1}$ vanishes.  It is easy to see that this is the
  same as the vanishing of all the $\alpha$-invariants $\alpha(\p_j X)$
  for all the boundary components $\p_1 X,\cdots, \p_k X$. Indeed, the sum of
  these $k$ $\alpha$-invariants in $ko_{n-1}$ is just
  $\alpha(\p X)=0$, since $\p X$ is a spin boundary and
  $\alpha$ is a spin bordism invariant. (In other words,
  the summand $ko_{n-1}^{k-1}$ in $ko_n(\Gamma)$ is better described as the
  set of elements in $ko_{n-1}^k$ that sum to $0$.)  And we can
  compute the piece of $\alpha_\Gamma(M)$ in one summand of $ko_{n-1}$
  by taking the $\alpha$-invariant of the transverse inverse image
  of a point under
  the map $M\to S^1$ obtained by collapsing $k-2$ of the
  loops in $\Gamma\simeq \bigvee_{k-1} S^1$ and composing with
  the classifying map $c$ as in Figure \ref{fig:fig3}.  Thus (1) is
  equivalent to (3). Applying Stolz's
  Theorem \cite{MR1189863} and using the assumption that $n-1\ge 5$,
  we obtain the equivalence of (2) and (3), and thus (1), (2), and (3)
  are all equivalent.
    \begin{figure}[hbt]
      \begin{tikzpicture}
      \node at (-4.3cm, 0cm) {$M$};
      \draw[thick,black] (-2cm, 0cm) ellipse (2cm and 1.05cm);
      \draw[thick,black] (-2cm, .45cm) ellipse (1cm and .3cm);
      \draw[thick,black] (-2cm, -.45cm) ellipse (1cm and .3cm);
      \draw[thick,red]   (-2cm,.9cm) ellipse (.07cm and .15cm);
      \draw[thick,black]  (-2cm,-.9cm) ellipse (.07cm and .15cm);
      \draw[thick,black]  (-2cm,0cm) ellipse (.07cm and .15cm);
      \draw[thick,blue,->] (0.3cm,0cm) -- (2.3cm, 0cm)
      node[above,midway] {$c$};
      \node at (2.5cm, 0cm) {$\bullet$};
      \node at (3.5cm, 0cm) {$\bullet$};
      \node[red] at (3.0cm, 1cm) {$\bullet$};
      \node at (3.0cm, -1cm) {$\bullet$};
      \draw[thick,black] (3.5cm, 0cm) -- (3.0cm, 1cm) --
      (2.5cm, 0cm) --  (3.5cm, 0cm) -- (3.0cm, -1cm) -- (2.5cm, 0cm);
      \node at (3.3cm, .6cm) {$\Gamma$};
      \draw[thick,blue,->] (3.0cm, -.9cm) -- (3.0, -.1cm);
      \draw[thick,blue,->] (3.8cm, 0cm) -- (4.8cm, 0cm);
      \draw[thick,black]  (5.5cm, 0cm) circle (.5cm);
      \node[red] at (5.5cm, .5cm) {$\bullet$};
    \end{tikzpicture}  
  \caption{\label{fig:fig3} Computing the $\alpha_\Gamma$-invariant}
  \end{figure}
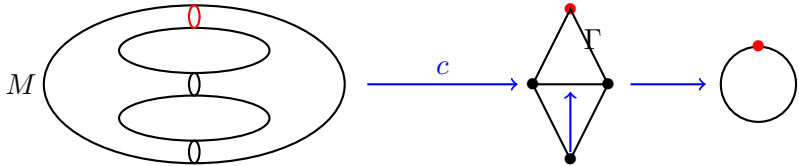

  The equivalence of (2) and (4) is related to Chernysh's
  Theorem (\cite[Theorem 1.1]{MR2213758} and \cite[Theorem 1.1]{MR4236952}
  --- see also \cite[Corollary D]{MR4150632}). Let's explain this
  in more detail, since Chernysh proves that
  the map $\cR^+(X,\p X)\to \cR^+(\p X)$ is a Serre fibration\footnote{Here
    $\cR^+(\p X)$ is the space of {\psc} metrics on $\p X$, and
    $\cR^+(X,\p X)$ is the space of {\psc} metrics on $X$ that restrict
    to a product metric on a collar neighborhood of the boundary.}, but
  not that $\cR^+(X,\p X)\ne \emptyset$.  (In fact, even in nice
  situations such as $X=D^8$ and $\p X=S^7$, the map
  $\cR^+(X,\p X)\to \cR^+(\p X)$ is known not to be surjective.)
  Obviously (4) implies (2). If (2) holds, we need
  to show that for some choice of {\psc} metrics on $\p_j X$, the
  metric on $\p X$ extends to a {\psc} metric on $X$ which is a product
  metric in a collar neighborhood of the boundary. For this we use
  the assumption that $k\ge 2$, and we fix metrics of {\psc} on the
  boundary components $\p_j X$, $1\le j\le k-1$.  Then we can
  view $X$ as a spin cobordism from $\p_1 X\coprod \cdots \coprod \p_{k-1} X$
  to $-\p_k X$.  The Gromov-Lawson surgery theorem
  \cite[Theorem A]{MR577131}, in the variant found in
  \cite{MR2213758,MR4236952,MR4150632}, shows that for \emph{some}
  spin cobordism $W$ between these manifolds, obtained from $X$
  by doing surgeries on embedded $2$-spheres
  with trivial normal bundles, away from the boundary,
  to make the pair $(W, \p_k X)$ $2$-connected
  (the pair $(X, \p_k X)$ is already $1$-connected by hypothesis),
  we can push the {\psc} metric
  on $\p_1 X\coprod \cdots \coprod \p_{k-1} X$ across the cobordism $W$
  to get a {\psc} metric on $W$ restricting to product metrics on
  collar neighborhoods of the boundary components. If $W=X$, we have
  shown that (2) $\Rightarrow$ (4).  In general $X$ and $W$ will not
  be the same, but we can go back from $W$ to $X$ by doing surgery on embedded
  $(n-3)$-spheres away from the boundary, so we can use the surgery
  theorem again to carry the metric of {\psc} over from $W$ to $X$.
  
  Now we need to check
  equivalence of the conditions (1)--(4) with (5) and (6).
  But by \cite[Theorem 33]{BH}, existence of a {\psc} metric
  on $X$ with $H\ge 0$ ($H$ denotes the mean curvature of $\p X$
  with respect to the outward normal) is equivalent to
  existence of a {\psc} metric with $H> 0$. Thus (5) implies
  (6).  It is also trivial that (4) $\Rightarrow$ (5), so
  (1)--(4) implies (5) and (6).  Theorem \ref{thm:GLdoubling}
  shows that (6) implies (1) and thus all the other conditions.
  Thus all six conditions are equivalent.
\end{proof}
\begin{remark}
  If the boundary of $X$ is empty, then $\Dbl(X,\p X)=X\coprod -X$,
  which clearly has a metric of {\psc} if and only if $X$ does.
  So in this case (2) and (3) always hold, (4)--(6) amount to saying
  that $X$ has a metric of {\psc}, and these conditions are equivalent
  to (1).
\end{remark}

Once again, one can easily modify Theorem \ref{thm:sc} to the non-spin
case as follows:
\begin{theorem}
\label{thm:scnonspin}
Let $X$ be a simply connected compact manifold with non-empty boundary, with
$n=\dim X\ge 6$. Note that $\p X$ can have any number $k$ of boundary
components, $\p_1 X,\cdots, \p_k X$.
Suppose all components of $\p X$ are simply connected and that
{\bfseries none} of $X$ and the $\p_j X$ admit a spin structure.
Let $M=\Dbl(X,\p X)$ denote the double of $X$
along its boundary, which is a closed oriented manifold of dimension $n$.
Then the following are true:
\begin{enumerate}
\item $M$ admits a metric of \psc.
\item All components $\p_j X$ of $\p X$ admit metrics of \psc.
\item $X$ admits a {\psc} metric which is a product metric in a
  collar neighborhood of $\p X$.
\item $X$ admits a {\psc} metric for which $\p X$ is minimal
  {\lp}i.e., has vanishing mean curvature{\rp}.
\item $X$ admits a {\psc} metric which gives $\p X$ positive mean curvature
  with respect to the outward normal.  
\end{enumerate}
\end{theorem}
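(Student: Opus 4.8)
The plan is to run the proof of Theorem~\ref{thm:sc} again, with spin bordism replaced by oriented bordism and with the role of ``$\alpha(\p_j X)=0$'' played by the theorem of Gromov and Lawson that every simply connected non-spin closed manifold of dimension $\ge 5$ admits a metric of {\psc}. Note first that, since $X$ and each $\p_j X$ are simply connected, ``not spin'' means exactly ``$w_2\ne 0$'' for each of them. If $k=1$, the pair $(X,\p X)$ meets the hypotheses of Theorem~\ref{thm:reloneconnnonspin}, whose conclusions (1), (2), (3), (5), (6) are precisely (1)--(5) of the present statement, so we may assume $k\ge 2$. Conclusion (2) is then immediate: each $\p_j X$ is simply connected, non-spin, and of dimension $n-1\ge 5$, so it carries a {\psc} metric.

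The substance is (3), and here I would follow the argument for the implication (2)$\Rightarrow$(4) in Theorem~\ref{thm:sc}. Fix {\psc} metrics on $\p_1 X,\dots,\p_{k-1}X$ and view $X$ as an oriented cobordism from $N_0:=\p_1 X\sqcup\cdots\sqcup\p_{k-1}X$ to $-\p_k X$. It suffices to modify $X$, by surgeries in its interior, to an oriented cobordism $W$ with the same two ends such that $(W,\p_k X)$ is $2$-connected: the Gromov--Lawson surgery theorem, in the variant of \cite{MR2213758,MR4236952,MR4150632}, then pushes the {\psc} metric on $N_0$ across $W$ to a {\psc} metric on $W$ that is a product near every boundary component, and one transports it back from $W$ to $X$ along the dual (codimension-$3$) surgeries on $(n-3)$-spheres. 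As $(X,\p_k X)$ is already $1$-connected, what is needed is to kill, by interior surgery on embedded $2$-spheres, the cokernel of $H_2(\p_k X;\bZ)\to H_2(X;\bZ)$. A class $x\in H_2(X;\bZ)$ is so killable precisely when the corresponding $2$-sphere has trivial normal bundle, i.e.\ when $\langle w_2(X),x\rangle=0$, so the killable classes form the index-$2$ subgroup $K:=\ker\langle w_2(X),-\rangle$. The one genuinely new point, compared with the spin case, is to check that $K$ still surjects onto $H_2(X;\bZ)/\operatorname{im}H_2(\p_k X;\bZ)$: this is where the hypothesis that $\p_k X$ is not spin is used. Since $\p_k X$ has trivial normal bundle in $X$, one has $w_2(X)|_{\p_k X}=w_2(\p_k X)\ne 0$, so some class in $\operatorname{im}(H_2(\p_k X;\bZ)\to H_2(X;\bZ))$ lies outside $K$; hence $K+\operatorname{im}H_2(\p_k X;\bZ)=H_2(X;\bZ)$ and $K$ does surject onto the cokernel. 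Surgering a finite collection of classes in $K$ whose images generate this cokernel (and checking, as usual, that each remains surgerable after the previous surgeries) produces the desired $W$ with $(W,\p_k X)$ $2$-connected.

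The remaining conditions are then formal. A {\psc} metric on $X$ that is a product near $\p X$ makes $\p X$ totally geodesic, hence minimal, giving (4); then (5) follows from (4) by \cite[Theorem~33]{BH}, which upgrades a {\psc} metric with $H\ge 0$ to one with $H>0$; and (1) follows from (5) by Theorem~\ref{thm:GLdoubling}. The step I expect to be the main obstacle is exactly the one highlighted above --- arranging that the surgeries making $(W,\p_k X)$ $2$-connected can all be performed on embedded $2$-spheres with trivial normal bundle --- and its resolution is the observation that the non-spin-ness of $\p_k X$ forces a $w_2$-essential class into $\operatorname{im}H_2(\p_k X;\bZ)$; without some such hypothesis on the boundary components the argument (and in fact the statement) would break down.
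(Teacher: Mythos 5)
Your proposal is correct and follows essentially the same route as the paper: reduce $k=1$ to Theorem~\ref{thm:reloneconnnonspin}, get (2) from Gromov--Lawson for simply connected non-spin manifolds, and establish (3) by the same surgery/cobordism argument as in Theorem~\ref{thm:sc}; your careful justification that the $2$-spheres needed to make $(W,\p_k X)$ $2$-connected can be chosen with trivial normal bundle (via $w_2(X)|_{\p_k X}=w_2(\p_k X)\ne 0$, so $\ker\langle w_2(X),-\rangle$ still surjects onto the cokernel) is exactly the point the paper elides by saying the proof is ``exactly the same'' as the spin case. The only (harmless) divergence is that you deduce (1) from (5) via Theorem~\ref{thm:GLdoubling}, whereas the paper gets (1) directly from the Stolz--Jung theorem and the vanishing of $H_n(BF_{k-1};\bZ)$.
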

\begin{proof}
  Assertion (2) follows immediately from \cite[Corollary C]{MR577131}. As
  in the proof of Theorem \ref{thm:sc}, $\pi_1(M)\cong F_{k-1}$, a free
  group.  When $k=1$, we can apply Theorem \ref{thm:reloneconnnonspin}.
  So we can assume $k\ge 2$.  To prove (1), we can apply
  \cite[Theorem 4.11]{MR1818778} or \cite[Theorem 1.2]{MR3078256},
  which says that it suffices to show that
  there is a manifold with {\psc} representing the same class as $M$
  in $H_n(BF_{k-1}, \bZ) = H_n(\bigvee_1^{k-1} S^1, \bZ) = 0$
  (since $n>1$). So this is automatic. The proof of the remaining
  conditions is exactly the same as for Theorem \ref{thm:sc}.
\end{proof}  

\section{Obstruction Theory for the Relative Problem}
\label{sec:obstr}
In this section we extend some of the results of Theorem
\ref{thm:reloneconn} and Theorem \ref{thm:sc} to get a general
obstruction theory and a conjectured answer for the following problem:
\begin{question}
\label{q:relobstr}
Suppose that $X$ is a connected compact spin manifold of dimension $n$
with non-empty boundary $\p X$ (which could be disconnected).  Assume
that $\p X$ admits a metric of {\psc}.  (When the fundamental
groups of the components $\p_1 X,\cdots,\p_k X$ of $\p X$ are nice
enough and $n\ge 6$, the Gromov-Lawson-Rosenberg Conjecture holds and
one has a necessary and sufficient condition for this in terms
of $KO$-theoretic ``$\alpha$-invariants'' $\alpha_{\pi_1(\p_j X)} (\p_j X)$.
See Theorem \ref{thm:GLRmain} below.)  When does $X$ admit
a metric of {\psc} which is a product metric in a collar neighborhood
of $\p X$?
\end{question}
To explain our approach to this, we first establish some notation.
The fundamental groupoid $\Lambda$ of $\p X$ is equivalent
to $\coprod_j \Lambda_j$, the disjoint union of the groups
$\Lambda_j=\pi_1(\p_j X)$, where $\p_1 X,\cdots,\p_k X$ are the
components of $\p X$ and we pick a
basepoint in each component.  The classifying space $B\Lambda$ is
homotopy equivalent to $\coprod_j B\Lambda_j$.
There is a classifying map $c^\Lambda\co \p X\to B\Lambda$,
unique up to homotopy equivalence, which is an isomorphism on
fundamental groups on each component.  The spin structure of
$X$ determines spin structures on each $\p_j X$ and thus
$ko$-fundamental classes $[\p_j X]\in ko_{n-1}(\p_j X)$.
By \cite[Theorem 4.11]{MR1818778} (or if you prefer,
\cite[Theorem 1.2]{MR3078256}), the question of whether or
not $\p_j X$ admits a metric of {\psc} only depends on
$c^\Lambda_*([\p_j X])\in ko_{n-1}(B\Lambda_j)$, and so the question of
whether or not $\p X$ admits a metric of {\psc} only depends on
$c^\Lambda_*([\p X])\in ko_{n-1}(B\Lambda)$. Furthermore, the image of
$c^\Lambda_*([\p X])$ under ``periodization'' (inverting the Bott element)
$\per\co ko_{n-1}(B\Lambda)\to KO_{n-1}(B\Lambda)$, followed by the
$KO$-assembly map $A\co KO_{n-1}(B\Lambda) \to KO_{n-1}(C^*_{r,\bR}(\Lambda))$
(or one could use the full $C^*$-algebra here), is an obstruction
to {\psc} on $\p X$.  Thus if for each $j$, periodization and
assembly are injective for $\Lambda_j$, we obtain the
\emph{Gromov-Lawson-Rosenberg Conjecture}:
\begin{theorem}[{\cite[Theorem 4.13]{MR1818778}}]
\label{thm:GLRmain}
Suppose $(X, \p X)$ is a connected compact spin manifold
of dimension $n$ with non-empty boundary $\p X$. With the
above notation, if $\p X$ admits a metric of {\psc},
then $A\circ\per(c^\Lambda_*([\p X]))=0$ in $KO_{n-1}(C^*_{r,\bR}(\Lambda))$.
If $n\ge 6$ and if $A$ and $\per$ are injective for $\Lambda$,
then vanishing
of $c^\Lambda_*([\p X])\in ko_{n-1}(B\Lambda)$ is necessary and sufficient
for $\p X$ to admit a metric of {\psc}.
\end{theorem}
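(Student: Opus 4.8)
The statement is really an assembly of ingredients, most of which already appear in the discussion preceding it, so I would organise the proof into three steps.

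\emph{Step 1 --- the index obstruction.} Suppose $\p X$ carries a metric of {\psc}; then so does each component $\p_j X$. For each $j$ I would invoke the Schr\"odinger--Lichnerowicz formula for the spinor Dirac operator on $\p_j X$ twisted by the real Mishchenko bundle of finitely generated projective $C^*_{r,\bR}(\Lambda_j)$-modules attached to $c^\Lambda\co\p_j X\to B\Lambda_j$: positivity of the scalar curvature makes this operator invertible, so its index in $KO_{n-1}(C^*_{r,\bR}(\Lambda_j))$ vanishes, and by the Mishchenko--Fomenko index theorem this index equals $A\circ\per\bigl(c^\Lambda_*([\p_j X])\bigr)$. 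Since $B\Lambda\simeq\coprod_j B\Lambda_j$ and $KO_{n-1}(C^*_{r,\bR}(\Lambda))\cong\bigoplus_j KO_{n-1}(C^*_{r,\bR}(\Lambda_j))$, with each summand vanishing, we get $A\circ\per\bigl(c^\Lambda_*([\p X])\bigr)=0$. This proves the first assertion and uses nothing about $n$.

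\emph{Step 2 --- the equivalence when $\per$ and $A$ are injective.} Assume now $n\ge 6$, so $n-1\ge 5$, and that $\per$ and $A$ are injective for $\Lambda$ (equivalently for every $\Lambda_j$). The ``necessary'' direction is then immediate: if $\p X$ has {\psc}, Step 1 gives $A\circ\per(c^\Lambda_*([\p X]))=0$, and injectivity of the composite $A\circ\per$ forces $c^\Lambda_*([\p X])=0$ in $ko_{n-1}(B\Lambda)$. For the ``sufficient'' direction, suppose $c^\Lambda_*([\p X])=0$; under $ko_{n-1}(B\Lambda)\cong\bigoplus_j ko_{n-1}(B\Lambda_j)$ this forces $c^\Lambda_*([\p_j X])=0$ for every $j$, whereupon \cite[Theorem 4.11]{MR1818778} --- which, for a closed spin manifold of dimension $n-1\ge 5$, converts vanishing of the image of its $ko$-fundamental class in the $ko$-homology of the classifying space into the existence of a {\psc} metric --- shows that each $\p_j X$, and hence $\p X$, admits a metric of {\psc}. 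Note that this direction uses $n-1\ge 5$ but not the injectivity hypotheses.

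\emph{Step 3 --- where the work sits.} The only non-formal input is the appeal to \cite[Theorem 4.11]{MR1818778} in Step 2, and unpacking it is the step I expect to be the main obstacle. Its proof combines Stolz's structural theorem --- identifying the spin bordism classes over $B\Lambda_j$ whose $ko$-fundamental class vanishes with those that are spin bordant, over $B\Lambda_j$, to a manifold of {\psc} --- with the Gromov--Lawson surgery theorem \cite[Theorem A]{MR577131} and the Bordism Theorem, which slide a {\psc} metric across a bordism decomposed into surgeries of codimension $\ge 3$; the hypothesis $n-1\ge 5$ is exactly what makes such a decomposition available. Beyond this I would simply take as given the injectivity of $\per$ and of $A$ for $\Lambda$, which are Novikov/Baum--Connes-type statements and not to be proved here; granting these, the rest of the argument is bookkeeping.
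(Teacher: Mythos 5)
Your proof is correct and follows exactly the route the paper intends: the paper states this result as a citation of \cite[Theorem 4.13]{MR1818778} and justifies it in the preceding paragraph by precisely your three ingredients --- the Lichnerowicz/Mishchenko--Fomenko index obstruction $A\circ\per$, injectivity to upgrade the obstruction to vanishing in $ko_{n-1}(B\Lambda)$, and the Stolz--Jung theorem \cite[Theorem 4.11]{MR1818778} for sufficiency in dimension $n-1\ge 5$. Your componentwise bookkeeping over $B\Lambda\simeq\coprod_j B\Lambda_j$ and your observation about which hypotheses each direction actually uses are both accurate.
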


Now in the situation of Theorem \ref{thm:GLRmain}, one also has a
$ko$-fundamental class $[X,\p X]\in ko_n(X, \p X)$,
which maps under the boundary map of the long exact sequence
of the pair $(X,\p X)$ to $[\p X]\in ko_{n-1}(\p X)$.
Let $\Gamma=\pi(X)$ and $\Lambda=\pi(\p X)$ as above.
(Note that we are using the fundamental groupoids to avoid
having to make changes of basepoint.  But there is a natural
map of topological groupoids $\Lambda\to\Gamma$.)
We have the following commuting diagram of $ko$-groups, coming
from the long exact sequences of the
pairs $(X, \p X)$ and $(B\Gamma, B\Lambda)$:
\begin{equation}
\label{eq:koexactseq}
\xymatrix{
  & [X,\p X] \ar@{|->}[r]^\p
  \ar@{}[d] |{\cap\!\!\text{\rule{.3pt}{5pt}}}& [\p X]
  \ar@{}[d] |{\cap\!\!\text{\rule{.3pt}{5pt}}} \\
  ko_n(X) \ar[r] \ar^{c^\Gamma_*}[d]
  & ko_n(X, \p X) \ar[r]^\p \ar^{c^{\Gamma,\Lambda}_*}[d] &
  ko_{n-1}(\p X) \ar^{c^\Lambda_*}[d]\\
  ko_n(B\Gamma) \ar[r] & ko_n(B\Gamma, B\Lambda)\ar[r]^\p &
  \, ko_{n-1}(B\Lambda).}
\end{equation}
One of course gets similar diagrams with $ko$ replaced by
$H$ (ordinary homology), $\Omega^\spin$, and $KO$ (periodic $K$-homology).
If $\p X$ admits a metric of {\psc}, then Question \ref{q:relobstr}
is meaningful.  Here is our first major result on Question \ref{q:relobstr}.
\begin{theorem}
\label{thm:relbordism}
Suppose $(X, \p X)$ is a connected compact spin manifold
of dimension $n\ge 6$ with non-empty boundary $\p X$. Let
$\Gamma=\pi(X)$ and $\Lambda=\pi(\p X)$ be the fundamental groupoids.
As explained above, note that $(X, \p X)$ defines a class
$c^{\Gamma,\Lambda}_*([X, \p X])\in \Omega^\spin_n(B\Gamma,B\Lambda)$.  Suppose
there is another compact spin manifold $(Y, \p Y)$
of the same dimension, 
defining the same class in $\Omega^\spin_n(B\Gamma,B\Lambda)$.  If
$Y$ admits a metric of {\psc} which is a product metric in a
neighborhood of the boundary, then so does $X$.
\end{theorem}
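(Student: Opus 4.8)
The plan is to realize the equality of bordism classes by a geometric bordism and then to push the given {\psc} metric on $Y$ across that bordism onto $X$, after first improving the bordism---by surgery below the middle dimension---so that only handles of ``allowed'' index occur.

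First I would unwind the hypothesis. Since $c^{\Gamma,\Lambda}_*([X,\p X])=c^{\Gamma,\Lambda}_*([Y,\p Y])$ in $\Omega^{\spin}_n(B\Gamma,B\Lambda)$, there is a compact spin $(n+1)$-manifold with corners $\mathcal W$, equipped with a reference map $\mathcal W\to B\Gamma$, whose boundary is the union of $Y\sqcup(-X)$ with a bordism $V$ from $\p Y$ to $\p X$; here $V$ carries a reference map $V\to B\Lambda$ compatible with $V\subset\p\mathcal W$, with $B\Lambda\to B\Gamma$, and with the reference maps of $\p X$ and $\p Y$, and the corner locus of $\mathcal W$ is $\p X\sqcup\p Y$. (This is the standard geometric description of a relative bordism class; applying $\p$ recovers $[\p X]=[\p Y]$ in $\Omega^{\spin}_{n-1}(B\Lambda)$, realized by $V$.) We may assume $\mathcal W$ connected. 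Throughout I use $\dim\mathcal W=n+1\ge 7$, $\dim V=n\ge 6$ and $\dim\p X=n-1\ge 5$, so that all the surgeries below the middle dimension that arise are available.

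The crux is to improve $\mathcal W$ so that, read as a bordism from $Y$ to $X$, it admits a handle decomposition---in the sense of relative Morse theory for manifolds with corners---in which every interior handle has index $\le n-2$ and every handle meeting the boundary face $V$ has index $\le n-3$. I would do this in two steps. First, surgeries in the interior of $V$, realized inside $\mathcal W$ by handles attached along $\p\mathcal W$ away from $Y$ and $X$ (so that $Y$, $\p Y$, $X$, $\p X$, and the {\psc} metric on $Y$ near $\p Y$ are left untouched), make the reference map $V\to B\Lambda$ a $\pi_1$-isomorphism and then kill $\pi_2(V,\p X)$; since $\p X\to B\Lambda$ is a $\pi_1$-isomorphism by the construction of $c^\Lambda$, the pair $(V,\p X)$ becomes $2$-connected. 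Second, surgeries in the interior of $\mathcal W$ (disjoint from $\p\mathcal W$) make $\mathcal W\to B\Gamma$ a $\pi_1$-isomorphism and kill $\pi_2(\mathcal W,X)$; since $X\to B\Gamma$ is a $\pi_1$-isomorphism, $(\mathcal W,X)$ becomes $2$-connected, and $(V,\p X)$ stays $2$-connected because these surgeries do not meet $V$. All the surgeries are framed spin surgeries below the middle dimension on nullhomotopic spheres, so the spin structures and reference maps extend over their traces. Turning the handle decompositions of $\mathcal W$ relative to $X$ and of $V$ relative to $\p X$ upside down, their $2$-connectedness says exactly that, built from the $Y$-end, $\mathcal W$ uses only handles of index $\le n-2$, with the boundary handles of index $\le n-3$. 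The delicate point is precisely this relative/boundary Morse theory: the two connectivity reductions must be carried out simultaneously and compatibly, bookkeeping the reference maps and the corner locus and controlling the interaction of interior and boundary handles. I expect this to be the main obstacle; it is the relative analogue of the usual surgery-below-the-middle-dimension argument.

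Finally I would traverse $\mathcal W$ from $Y$ to $X$, pushing the metric. Start with the given {\psc} metric on $Y$, a product metric near $\p Y$, and attach the handles of the improved decomposition in order of nondecreasing index. An interior handle of index $k\le n-2$ effects, on the current level hypersurface $(N_t,\p N_t)$, a surgery of codimension $n+1-k\ge 3$ on a sphere lying in the interior of $N_t$; the Gromov--Lawson surgery theorem for manifolds with boundary---\cite[Theorem A]{MR577131} in the variant of \cite{MR2213758,MR4236952,MR4150632}---carries the {\psc} metric across, leaving it unchanged, hence still a product metric, near $\p N_t$. A boundary handle of index $k\le n-3$ effects a surgery of codimension $\ge 3$ inside $\p N_t$, compatibly with the collar; the relative surgery theorem of \cite{BH} then carries the {\psc}-with-product-collar metric across. (Handles of index $0$, $1$, or $2$ cause no difficulty, since the associated surgeries have codimension $\ge n-1\ge 5$.) After the last handle we reach $X$ with a {\psc} metric that is a product metric in a collar neighborhood of $\p X$, as desired.
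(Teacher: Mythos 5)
Your proposal is correct and takes essentially the same approach as the paper: realize the equality of relative bordism classes by a spin bordism of pairs (a manifold with corners), improve it by surgeries below the middle dimension so that traversing from $Y$ to $X$ uses only interior handles of index $\le n-2$ and boundary handles of index $\le n-3$, and push the {\psc}-with-product-collar metric across using the Gromov--Lawson surgery theorem in its relative form. The only cosmetic difference is that you organize the improvement step as making $(V,\partial X)$ and $(\mathcal W,X)$ $2$-connected and then reading off the dual handle decomposition, whereas the paper phrases it as doing $1$- and $2$-surgeries on $\partial Y$, on $Y$, and in the interior of $W$ to equalize components and fundamental groups before decomposing the bordism into codimension-$\ge 3$ surgeries.
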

\begin{remark}
\label{rem:pipi}
Note that Theorem \ref{thm:relbordism} includes the assertion
that if $\p X$ is connected and $\Gamma=\Lambda$,
then since in this case the relative groups for $(B\Gamma,B\Lambda)$
vanish for any homology theory, we conclude that $X$ always
has a metric of {\psc} which is a product metric in a neighborhood
of the boundary.  We already know this from Theorem \ref{thm:reloneconn}.
\end{remark}
\begin{proof}
  Note that the condition of the theorem implies allows for $Y$ to
  be disconnected or to have more boundary components than $X$.  But
  the condition implies that there is a spin manifold $W$ with corners,
  of dimension $n+1$, giving a spin bordism from $(Y, \p Y)$ to $(X, \p X)$,
  which restricts on the boundary to a spin bordism of closed
  manifolds from $\p Y$ to $\p X$. Furthermore, this bordism is
  ``over'' $B\Lambda$ on the boundary and $B\Gamma$ on the interior.
  We can number the boundary components of $X$ as $\p_1 X,\cdots, \p_k X$,
  so that $\p Y =\coprod_j \p_j Y$ and $W$ gives a spin bordism
  from $\p_j Y$ to $\p_j X$ over $B\Lambda_j$.  See Figure \ref{fig:bordcorn},
  which illustrates the case $k=2$.

  \begin{figure}[hbt]
    \begin{tikzpicture}
      \fill[red!15!white,] (2cm, 2cm) -- (1.5cm, 2cm)
      arc (90:180:2cm and .7cm) -- (-.5cm, -1.7cm) 
      arc (180:90:2cm and .7cm) -- (2cm, -1cm) -- (2cm, 2cm);
      \fill[semitransparent,red!30!white] (-.5cm, 1.3cm)
      -- (-.5cm, -1.7cm) arc(180:270:2cm and .7cm) -- (1.5cm, 0.6cm)
      arc(-90:-180:2cm and .7cm);
      \draw[very thick,red] (2cm, 2cm) -- (2cm, -1cm);
      \draw[very thick,red] (1.5cm, 0.6cm)  -- (1.5cm, -2.4cm);
      \node[thick,red] at (2cm, 2cm) {$\bullet$};
      \node[thick,red] at (1.5cm, 0.6cm) {$\bullet$};
      \node[thick,red] at (2cm, -1cm) {$\bullet$};
      \node[thick,red] at (1.5cm, -2.4cm) {$\bullet$};      
      \draw[very thick,black] (2cm, 2cm) -- (1.5cm, 2cm)
      arc (90:270:2cm and .7cm);
      \draw[very thick,black] (2cm, -1cm) -- (1.5cm, -1cm)
      arc (90:270:2cm and .7cm);
      \node at (2.45cm, 2cm) {$\p_1 Y$};
      \node at (1.5cm, 2.3cm) {$Y$};
      \node at (1.2cm, 0.3cm) {$\p_2 Y$};
      \node at (2.45cm, -1cm) {$\p_1 X$};
      \node at (1.4cm, -2.7cm) {$\p_2 X$};
      \node at (.5cm, -2.6cm) {$X$};
      \node at (.5cm, -.8cm) {$W$};
    \end{tikzpicture}  
  \caption{\label{fig:bordcorn} The spin bordism $W$ from $(Y, \p Y)$
  to $(X, \p X)$}
  \end{figure}
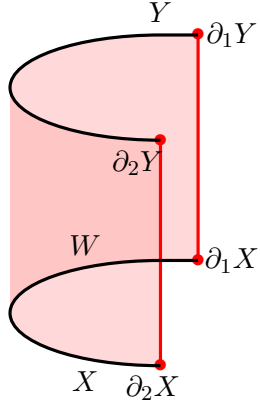

  Now we proceed as in the proof of the Gromov-Lawson bordism theorem
  \cite[Theorem B]{MR577131}, or more exactly of the generalization of
  this to the non-simply connected case \cite[Theorem 4.2]{MR1818778}.
  Start with a metric of {\psc} on $Y$
  that is a product metric in a neighborhood of the boundary.
  We can do surgeries by attaching handles of dimensions $1$ and $2$ to adjust
  $\p Y$ so that it has the same number of components as $\p X$ and so
  that $\p_j Y$ has the same fundamental group $\Lambda_j$ as
  $\p_j X$.  These surgeries are in codimension $3$ or more
  since $\dim \p_j Y\ge 5$, so we can do this preserving {\psc}
  on the successive transformations of $\p_j Y$.  We always extend the
  metric to a collar neighborhood in $W$ so as to be a product metric
  near this component of the boundary.  Similarly, we can then do surgeries
  on the interior of $Y$ and then on the interior of $W$ so that
  these have the same fundamental group $\Gamma$ as $X$.
  Then the bordism from $Y$ to $X$ can be decomposed into a sequence
  of surgeries (over $B\Lambda$ on the bordism from $\p Y$ to $\p X$ and over
  $B\Gamma$ in the interior) in codimension $3$ or more so that
  we can carry the metric across the bordism, preserving the {\psc}
  conditions.  The spin condition is used to know that whenever we want to do
  surgery on an embedded $1$-sphere or $2$-sphere, it has trivial
  normal bundle and thus the surgery is possible.
\end{proof}

The work of Stolz and Jung leading to \cite[Theorem 4.11]{MR1818778}
can be used to give a substantial improvement to Theorem
\ref{thm:relbordism}.  (Note that Jung's work was never published,
but that his results were reproved by F{\"u}hring in
\cite{MR3078256}.)
\begin{theorem}
\label{thm:relko}
Suppose $(X, \p X)$ is a connected compact spin manifold
of dimension $n\ge 6$ with non-empty boundary $\p X$. Let
$\Gamma=\pi(X)$ and $\Lambda=\pi(\p X)$ {\lp}and remember that
$\Lambda$ can have multiple connected components{\rp}.
As explained above, note that $(X, \p X)$ defines a class
$c^{\Gamma,\Lambda}_*([X, \p X])\in ko_n(B\Gamma,B\Lambda)$.  If
this class vanishes in this relative $ko$-homology group, then
$X$ admits a metric of {\psc} which is a product metric in a
neighborhood of the boundary.
\end{theorem}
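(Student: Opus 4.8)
The plan is to deduce this from Theorem \ref{thm:relbordism} by combining it with the Stolz--Jung analysis of the difference between spin bordism and connective $ko$-homology (\cite[Theorem 4.11]{MR1818778}, reproved via a smooth version of Baas--Sullivan theory in \cite{MR3078256}). By Theorem \ref{thm:relbordism} it suffices to produce a compact spin $n$-manifold $(Y,\p Y)$ that carries a {\psc} metric which is a product metric in a neighborhood of $\p Y$ and that defines the same class as $(X,\p X)$ in $\Omega^\spin_n(B\Gamma,B\Lambda)$. The $ko$-orientation is a natural transformation $D\co\Omega^\spin_*(-)\to ko_*(-)$ of homology theories, compatible with the long exact sequences of a pair, and it sends the relative spin bordism class $c^{\Gamma,\Lambda}_*([X,\p X])\in\Omega^\spin_n(B\Gamma,B\Lambda)$ to the relative $ko$-homology class $c^{\Gamma,\Lambda}_*([X,\p X])\in ko_n(B\Gamma,B\Lambda)$, which vanishes by hypothesis. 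So the goal is to show that every element of $\ker\bigl(D\co\Omega^\spin_n(B\Gamma,B\Lambda)\to ko_n(B\Gamma,B\Lambda)\bigr)$ is represented by a compact spin manifold equipped with a {\psc} metric that is a product near its boundary; applying Theorem \ref{thm:relbordism} to $(X,\p X)$ and such a representative $(Y,\p Y)$ then gives the conclusion.

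I would establish this in two stages. First I would treat the boundary: applying the connecting homomorphism $\p\co ko_n(B\Gamma,B\Lambda)\to ko_{n-1}(B\Lambda)$ to the vanishing class shows $c^\Lambda_*([\p X])=0$ in $ko_{n-1}(B\Lambda)$, i.e.\ the $ko$-class of each component $\p_jX$ vanishes in $ko_{n-1}(B\Lambda_j)$; since $n\ge6$ we have $\dim\p_jX=n-1\ge5$, so the absolute Stolz--Jung theorem furnishes a {\psc} metric on $\p X$, which I fix. Then I would run Stolz's construction relatively: decompose the relative spin bordism class so that its interior part is assembled from total spaces of $\HP^2$-bundles over manifolds mapping to $B\Gamma$ and its boundary part from $\HP^2$-bundles over manifolds mapping to $B\Lambda$, glued along collars. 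Such total spaces carry {\psc} metrics (positively curved in the $\HP^2$-fibre directions), and one arranges the smooth Baas--Sullivan bookkeeping so that the resulting compact spin manifold $(Y,\p Y)$ has its {\psc} metric a product in a collar of $\p Y$ and satisfies $c^{\Gamma,\Lambda}_*([Y,\p Y])=c^{\Gamma,\Lambda}_*([X,\p X])$ in $\Omega^\spin_n(B\Gamma,B\Lambda)$.

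The step I expect to be the main obstacle is this relative form of the Stolz--Jung construction: one must push the $\HP^2$-transfer / smooth Baas--Sullivan argument through for manifolds \emph{with boundary}, keeping the {\psc} metric a product in a collar so that the product-near-the-boundary hypothesis of Theorem \ref{thm:relbordism} is met, and one must check that the transfer is compatible with the boundary maps of the spin-bordism and $ko$-homology long exact sequences of $(B\Gamma,B\Lambda)$; this compatibility is what guarantees that the {\psc} manifolds-with-boundary produced by the construction realize the full relative kernel of $D$, and not merely the absolute kernels for $B\Gamma$ and for $B\Lambda$ taken separately. As in Theorem \ref{thm:relbordism}, the spin hypothesis is used to ensure that the $1$- and $2$-dimensional surgeries appearing in the argument have trivial normal bundles and can be carried out, and the bound $n\ge6$ is exactly what places the boundary, of dimension $n-1$, in the stable range $\ge5$ in which the Stolz--Jung theorem operates.
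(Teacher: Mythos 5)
Your proposal is correct and follows essentially the same route as the paper: reduce via Theorem \ref{thm:relbordism} to showing that the kernel of the relative orientation $\Omega^\spin_n(B\Gamma,B\Lambda)\to ko_n(B\Gamma,B\Lambda)$ is represented by manifolds carrying {\psc} metrics that are products near the boundary, and realize that kernel using the $\bH\bP^2$-bundle transfer together with the Jung/F\"uhring analysis. The paper resolves the step you flag as the main obstacle by localizing: at the prime $2$ it invokes Stolz's Theorem B, which is already stated for arbitrary pairs $(Z,W)$ and identifies the relative kernel with the image of the $\bH\bP^2$-transfer (so the bundle total spaces, with fibres rescaled to dominate, give product-near-boundary {\psc} representatives), while away from $2$ it uses F\"uhring's result that the kernel is generated by {\psc} manifolds, noting the argument carries over to the relative setting.
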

\begin{proof}
  First of all, note that by Theorem \ref{thm:relbordism}, there is a
  subgroup $\Omega^{\spin,+}_n(B\Gamma,B\Lambda)$ of
  $\Omega^\spin_n(B\Gamma,B\Lambda)$ such that $X$ admits a metric of
  {\psc} which is a product metric in a neighborhood of the boundary
  if and only if $c^{\Gamma,\Lambda}_*([X,\p X])$ lies in this subgroup.
  (This sub\emph{set} is
  a sub\emph{group} since the condition is clearly stable under disjoint union
  and orientation reversal, which represent addition and inversion
  in the spin bordism group.)  So we just need to show that the kernel
  of $\alpha_{(\Gamma,\Lambda)}\co \Omega^\spin_n(B\Gamma,B\Lambda)\to
  ko_n(B\Gamma,B\Lambda)$ lies in $\Omega^{\spin,+}_n(B\Gamma,B\Lambda)$.
  In addition, we can localize and check this separately after localizing
  at $2$ and after inverting $2$.  After localizing at $2$, we can invoke
  \cite[Theorem B]{MR1259520}, which says that after localizing
  at $2$, the kernel of $\Omega^\spin_n(Z,W)\to ko_n(Z,W)$ is, for any
  $Z$ and $W$,
  generated by the image of a transfer map associated to $\bH\bP^2$-bundles.
  What this means geometrically is that there is a generating set for
  the kernel consisting of pairs $(X, \p X)$ (mapping to $(Z,W)$)
  which can be taken (up to
  bordism) to be fiber bundles $\bH\bP^2\to (X, \p X)\to (Y, \p Y)$,
  with structure group the isometry group of $\bH\bP^2$, where $\dim Y = n-8$.
  If $X$ is of this form, just give $Y$ a metric which is a product
  metric near the boundary and lift it to a metric on $X$ which on the
  fibers is the usual metric on $\bH\bP^2$, but rescaled to have very
  large curvature.  This will have {\psc} and still be a product metric
  near the boundary.

  Now we just have to study the kernel of $\alpha$ after inverting $2$.
  For this we can use \cite[Corollary 3.2]{MR3078256}, which says that
  for any space $Z$, the kernel of
  $\alpha\co \Omega_*^\spin(Z)[\frac12]\to ko_*(Z)[\frac12]$
  is generated by manifolds which carry a {\psc} metric.  Almost exactly
  the same argument works in the relative case.
\end{proof}

Incidentally, there is an exact counterpart to Theorems
\ref{thm:relbordism} and \ref{thm:relko} in the totally non-spin case
(Definition \ref{def:totnspin}).
\begin{theorem}
\label{thm:relbordismnonspin}
Suppose $(X, \p X)$ is a connected compact oriented manifold
of dimension $n\ge 6$ with non-empty boundary $\p X$.
Suppose that $X$ and each component of $\p X$ is totally non-spin.  Let
$\Gamma=\pi(X)$ and $\Lambda=\pi(\p X)$.  Suppose
there is another compact oriented manifold $(Y, \p Y)$
of the same dimension, 
defining the same class in $\Omega_n(B\Gamma,B\Lambda)$.  If
$Y$ admits a metric of {\psc} which is a product metric in a
neighborhood of the boundary, then so does $X$.
\end{theorem}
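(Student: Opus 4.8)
The plan is to follow the proof of Theorem~\ref{thm:relbordism} almost verbatim, with oriented bordism in place of spin bordism throughout. The hypothesis that $(X,\p X)$ and $(Y,\p Y)$ represent the same class in $\Omega_n(B\Gamma,B\Lambda)$ provides a manifold $W^{n+1}$ with corners giving an oriented bordism from $(Y,\p Y)$ to $(X,\p X)$ whose ``side boundary'' is an oriented bordism of closed manifolds from $\p Y$ to $\p X$, carrying a reference map that is over $B\Lambda$ on the side boundary and over $B\Gamma$ in the interior; after renumbering, $W$ restricts to a bordism from $\p_jY$ to $\p_jX$ over $B\Lambda_j$ for each $j$. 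Starting from a {\psc} metric on $Y$ that is a product metric near $\p Y$, I would decompose $W$ into a sequence of surgeries of codimension $\ge3$ and push the metric across to $X$, keeping it a product metric near each boundary piece as soon as that piece has been dealt with, exactly as in the Gromov--Lawson bordism theorem \cite[Theorem B]{MR577131} and its non-simply-connected refinement \cite[Theorem 4.2]{MR1818778}.

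In order, the steps are: surgeries on embedded $0$- and $1$-spheres in $\p Y$ (extended trivially across a collar of $\p Y$ in $W$) so that $\p Y$ has the same number of components as $\p X$ and $\pi_1(\p_jY)\cong\Lambda_j$; surgeries on embedded $1$-spheres in the interior of $Y$ and then of $W$ so that $\pi_1(Y)\cong\pi_1(W)\cong\Gamma$; and surgeries on embedded $2$-spheres in the interiors of $Y$ and of $W$ so that both the $Y$-end and the $X$-end of $W$, and the side boundary in $\p W$, become $2$-connected in $W$. Then $W$, built up from the $Y$-end, has a handle decomposition with handles only of index $3,\dots,n-2$, so the passage from $Y$ to $X$ is realized by surgeries on embedded spheres of dimensions $2,\dots,n-3$, every one of codimension $\ge3$ with reverse also of codimension $\ge3$; one transports the {\psc} metric along these surgeries as in the cited theorems, keeping it a product metric near the not-yet-modified boundary, and obtains on $X$ a {\psc} metric that is a product metric near $\p X$. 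All the handle-trading and the bookkeeping with fundamental groups is identical to the spin case.

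The sole place where spinness was used in the proof of Theorem~\ref{thm:relbordism} is the fact that the embedded $1$- and $2$-spheres along which we surger have trivial normal bundle. For $1$-spheres this needs only orientability, since an orientable vector bundle over $S^1$ is trivial. For a $2$-sphere $\sigma$ embedded in an oriented manifold $N$ with $\dim N\ge5$, the normal bundle has rank $\ge3$ and its sole obstruction to triviality is $w_2(\nu_\sigma)=\langle w_2(TN),[\sigma]\rangle\in\bZ/2$. Here the totally non-spin hypothesis enters: the relevant manifold (a boundary component of $W$, or $W$ itself) contains an embedded $2$-sphere $\beta$ with $\langle w_2,[\beta]\rangle\ne0$, and replacing $\sigma$ by the ambient connected sum $\sigma\,\natural\,\beta$ --- formed inside $N$ along a tube disjoint from everything else by general position --- yields an embedded $2$-sphere whose normal bundle has $w_2$ equal to $w_2(\nu_\sigma)+w_2(\nu_\beta)=0$, hence is trivial, so that the surgery can be performed. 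Moreover $\beta$ can always be chosen inside the submanifold relative to which we are working --- the $Y$-end, $\p W$, or an already-simplified boundary piece, all of which are totally non-spin by hypothesis --- so $[\sigma\,\natural\,\beta]$ and $[\sigma]$ differ by a class already in the image of the relevant $\pi_2$, and surgery on $\sigma\,\natural\,\beta$ achieves exactly the simplification that surgery on $\sigma$ would have. This is the standard device for the totally non-spin case; cf.\ the surgery arguments of Gromov and Lawson and the treatment of this case by Jung reproved by F{\"u}hring \cite{MR3078256}.

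The step I expect to be the real (if modest) obstacle is making this correction bulletproof: checking that at every stage a ``bad'' $2$-sphere $\beta$ lies in (or can be pushed into) whichever submanifold we are simplifying relative to, and that each intermediate manifold produced along the way is still totally non-spin --- which holds because all the surgeries have codimension $\ge3$ and can be taken disjoint from a fixed parallel copy of $\beta$ pushed off from $X$ or from $Y$, so the relevant tangent bundle is unchanged along $\beta$. Once this is in hand, the decomposition of $W$ into codimension-$\ge3$ surgeries and the transport of the {\psc} metric, product near the boundary throughout, go through word for word as in Theorem~\ref{thm:relbordism}.
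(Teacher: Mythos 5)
Your overall strategy is right --- the connected-sum trick to kill $w_2$ of the normal bundle before surgering a $2$-sphere is exactly what makes the argument go through in the non-spin case --- but there is a concrete error in where you pull the ``bad'' sphere $\beta$ from. You choose $\beta$ ``inside the submanifold relative to which we are working --- the $Y$-end, $\p W$, or an already-simplified boundary piece, all of which are totally non-spin by hypothesis.'' That last clause is false: the hypotheses of the theorem say that \emph{$X$} and the components of \emph{$\p X$} are totally non-spin; nothing whatsoever is assumed about $Y$ or $\p Y$. So when you make the pairs $2$-connected relative to the $Y$-end, you have no guarantee that the needed $\beta$ exists, and the argument breaks down.

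The fix is to relativize to the $X$-end, which is what the paper does. Since the metric is being carried from $Y$ to $X$, the only constraint from the Gromov--Lawson surgery theorem is that the surgeries on $Y$ have codimension $\ge 3$, i.e.\ the handle decomposition of $W$ built from $Y$ has handles of index $\le n-2$; by duality this is exactly the condition that $(W,X)$ and $(\p_j W,\p_j X)$ be $2$-connected, with no condition on $(W,Y)$. And it is $\p_j X$ and $X$ --- not $\p_j Y$ or $Y$ --- that are totally non-spin, so that $w_2\co\pi_2(\p_j X)\to\bZ/2$ is onto; the commuting triangle with $\pi_2(\p_j X)\to\pi_2(\p_j W)\to\bZ/2$ then shows that $\ker(w_2\co\pi_2(\p_j W)\to\bZ/2)$ together with the image of $\pi_2(\p_j X)$ generates $\pi_2(\p_j W)$, so you can kill $\pi_2(\p_j W,\p_j X)$ by surgeries on $2$-spheres with trivial normal bundle (generators of that kernel, i.e.\ your $\sigma\,\natural\,\beta$ with $\beta$ drawn from $\p_j X$). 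The same argument makes $(W,X)$ $2$-connected. This also dissolves the bookkeeping worry you raise at the end: the totally non-spin manifolds you draw $\beta$ from are $X$ and the $\p_j X$, which sit in $W$ and $\p_j W$ unchanged throughout the surgery process, so there is no need to track whether intermediate stages of the bordism remain totally non-spin.
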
  
\begin{proof}
  This is exactly like the proof of Theorem \ref{thm:relbordism} except
  for a few points regarding the totally non-spin condition.  Start with
  an oriented bordism $W$ from $Y$ to $X$, as in Figure \ref{fig:bordcorn}
  (except that it can't be spin since $X$ is not). As in the proof
  of Theorem \ref{thm:relbordism}, start by doing surgeries on the
  boundary so that $\p Y$ has the same number of components as $\p X$,
  and number them so that $\p W$ gives a bordism from $\p_j Y$
  to $\p_j X$ over $\Lambda_j$ and so that the appropriate piece $\p_j W$
  of $\p W$ has fundamental group $\Lambda_j$. Recall that $w_2$
  restricted to the image of the Hurewicz map gives the obstruction
  to triviality of the normal bundle for embedded $2$-spheres.
  Since $w_2(\p_j X)$ is non-zero on the image of the Hurewicz map,
  we consider the commutative diagram
  \[
  \xymatrix{\pi_2(\p_j X)\ar[rr]\ar@{->>}[rd]^{w_2} &
    & \pi_2(\p_j W) \ar[ld]_{w_2}\\ & \bZ/2 &}
  \]
  (where the horizontal map on the top is induced by the inclusion
  $\p_j X\hookrightarrow \p_j W$) and do surgeries on $2$-spheres
  (which have to have trivial normal bundles)
  representing generators of the kernel of $w_2$ on $\pi_2(\p_j W)$,
  in order to make $(\p_j W,\p_j X)$ $2$-connected.  Similarly, we
  can make $(W,X)$ $2$-connected by the same argument.  The rest of the
  proof is as before.
\end{proof}
\begin{theorem}
\label{thm:relhom}
Suppose $(X, \p X)$ is a connected compact oriented manifold
of dimension $n\ge 6$ with non-empty boundary $\p X$.  Assume that
$X$ and each component of $\p X$ is totally non-spin. Let
$\Gamma=\pi(X)$ and $\Lambda=\pi(\p X)$ and 
note that $(X, \p X)$ defines a class
$c^{\Gamma,\Lambda}_*([X, \p X])\in H_n(B\Gamma,B\Lambda; \bZ)$.  If
this class vanishes in this relative homology group, then
$X$ admits a metric of {\psc} which is a product metric in a
neighborhood of the boundary.
\end{theorem}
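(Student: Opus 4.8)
The plan is to run the totally non-spin counterpart of the proof of Theorem \ref{thm:relko}, systematically replacing spin bordism $\Omega^\spin$ by oriented bordism $\Omega$, connective $ko$ by ordinary homology $H(-;\bZ)$, the $\bH\bP^2$-transfer theorem \cite[Theorem B]{MR1259520} by the corresponding structural statement about the Thom spectrum $\mathsf{MSO}$ localized at $2$ (Thom and Wall), and \cite[Corollary 3.2]{MR3078256} by its oriented analogue. First I would invoke Theorem \ref{thm:relbordismnonspin} to produce a subgroup: let $\Omega^+_n(B\Gamma,B\Lambda)\subseteq\Omega_n(B\Gamma,B\Lambda)$ be the set of classes representable by a compact oriented $(Y,\p Y)$, with $Y$ and all components of $\p Y$ totally non-spin, carrying a {\psc} metric that is a product metric near $\p Y$. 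It is closed under disjoint union and orientation reversal and contains $0$, so it is a subgroup, and by Theorem \ref{thm:relbordismnonspin} --- using that $X$ and each component of $\p X$ are totally non-spin --- $X$ admits the desired metric if and only if $c^{\Gamma,\Lambda}_*([X,\p X])$ lies in $\Omega^+_n(B\Gamma,B\Lambda)$. Since the hypothesis forces $c^{\Gamma,\Lambda}_*([X,\p X])$ into the kernel of the Thom homomorphism $\Omega_n(B\Gamma,B\Lambda)\to H_n(B\Gamma,B\Lambda;\bZ)$, it suffices to show that this entire kernel lies in $\Omega^+_n(B\Gamma,B\Lambda)$; and this I would check after inverting $2$ and after localizing at $2$ separately.

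Inverting $2$ is the easy half. Since $\Omega_*[\tfrac12]$ is polynomial over $\bZ[\tfrac12]$ on the classes of the $\bC\bP^{2i}$, the oriented analogue of \cite[Corollary 3.2]{MR3078256} shows that the kernel of $\Omega_*(B\Gamma,B\Lambda)[\tfrac12]\to H_*(B\Gamma,B\Lambda;\bZ[\tfrac12])$ is generated by classes of products $\bigl(\prod_a\bC\bP^{2i_a}\bigr)\times(N,\p N)$ having at least one factor. Such a product carries a {\psc} metric which is a product metric near the boundary (rescale the Fubini--Study factors to have large curvature), and it is totally non-spin, with totally non-spin boundary, because $w_2(\bC\bP^{2i})\ne 0$ is detected on the image of the Hurewicz map by an embedded $\bC\bP^1$; so this part of the kernel lies in $\Omega^+_n(B\Gamma,B\Lambda)$.

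Localizing at $2$ is where the real work sits, and I expect this to be the main obstacle --- just as in Theorem \ref{thm:relko}. Here $\mathsf{MSO}_{(2)}$ is a wedge of suspensions of $H\bZ_{(2)}$'s and of $H\bZ/2$'s, and the Thom map $\mathsf{MSO}_{(2)}\to H\bZ_{(2)}$ is, up to this splitting, projection onto the bottom $H\bZ_{(2)}$; hence the kernel of $\Omega_n(B\Gamma,B\Lambda)_{(2)}\to H_n(B\Gamma,B\Lambda;\bZ)_{(2)}$ is the sum of the contributions of all the other wedge summands, and I must supply totally non-spin, {\psc}, product-near-boundary generators for each. For the torsion-free summands I would run the $\bH\bP^2$-transfer argument of \cite[Theorem B]{MR1259520} with $\bC\bP^2$ in place of $\bH\bP^2$: up to bordism and after multiplying by classes from $\Omega_*(B\Gamma,B\Lambda)$, a generating set consists of total spaces of bundles with fibre $\bC\bP^2$ and structure group its isometry group, over bases that are product metrics near their boundaries; lifting a product-near-boundary metric from the base and rescaling the fibrewise Fubini--Study metrics produces {\psc} metrics that are products near the boundary, and $w_2(\bC\bP^2)\ne 0$ keeps the total space and its boundary totally non-spin. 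The order-$2$ (Wall) summands, which first occur in dimension $5$, I would handle as in the proof of \cite[Theorem 4.11]{MR1818778} / \cite[Theorem 1.2]{MR3078256}: their generators may be taken simply connected and non-spin (connect-summing a simply connected representative with a null-bordant $\bC\bP^2\times S^k$ if necessary), hence totally non-spin, and such manifolds carry {\psc} by \cite[Corollary C]{MR577131}. Finally, turning each generating manifold into $X$ through a sequence of surgeries of codimension $\ge 3$ --- over $B\Lambda$ on the boundary and over $B\Gamma$ in the interior, with the $1$- and $2$-handle surgeries legitimized by $w_2$ being non-zero on $\pi_2$ --- is the same bookkeeping as in the proofs of Theorems \ref{thm:relbordism} and \ref{thm:relbordismnonspin}. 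In effect this recovers, relatively, the content already behind the totally non-spin case of \cite[Theorem 4.11]{MR1818778} (reproved by F{\"u}hring in \cite{MR3078256}), the only genuinely new ingredient being the relative/corner structure.
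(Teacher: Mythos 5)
Your overall skeleton matches the paper's: reduce via Theorem \ref{thm:relbordismnonspin} to showing that the kernel of the Thom map $\Omega_n(B\Gamma,B\Lambda)\to H_n(B\Gamma,B\Lambda;\bZ)$ lands in $\Omega_n^+(B\Gamma,B\Lambda)$, and check this separately after inverting $2$ and after localizing at $2$. The inverting-$2$ half is essentially what the paper does (via \cite{MR3078256}).

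The localized-at-$2$ half is where you go astray, and it is worth noting that you do so after correctly stating the key fact. You observe that $\mathsf{MSO}_{(2)}$ splits as a wedge of suspensions of $H\bZ_{(2)}$'s and $H\bZ/2$'s (Thom, Wall), but then, instead of exploiting this directly, you propose to run a ``$\bC\bP^2$-transfer'' analogue of Stolz's $\bH\bP^2$-theorem \cite[Theorem B]{MR1259520}. No such theorem exists for $\mathsf{MSO}$, and --- more to the point --- none is needed. Stolz's $\bH\bP^2$-transfer is a hard theorem about $\mathsf{MSpin}_{(2)}$ precisely because $\mathsf{MSpin}_{(2)}$ does \emph{not} split into Eilenberg--Mac Lane pieces; one must instead identify the kernel of $\mathsf{MSpin}\to\ko$ with a transfer image. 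For $\mathsf{MSO}_{(2)}$ the situation is categorically simpler: the wedge decomposition is equivalent to the statement that the Atiyah--Hirzebruch spectral sequence $H_p(Z,Y;\Omega_q)\Rightarrow\Omega_{p+q}(Z,Y)$ collapses at $E_2$ after localizing at $2$, so $\Omega_n(Z,Y)_{(2)}\cong\bigoplus_{p+q=n}H_p(Z,Y;\Omega_{q,(2)})$ and the Thom map is projection onto the $q=0$ summand. The kernel is therefore the $q>0$ part, and since Gromov and Lawson \cite{MR577131} produced additive generators of $\Omega_q$ ($q>0$) that are closed oriented manifolds of {\psc}, product representatives immediately supply the required psc, product-near-boundary, totally non-spin manifolds. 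This is the whole argument at $2$ --- no transfer, no $\bC\bP^2$-bundles. Your proposal would require you to prove a new structural theorem about $\mathsf{MSO}$ along the way, which is both a gap in the write-up and substantially harder than the problem at hand.
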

\begin{proof}
  By Theorem \ref{thm:relbordismnonspin}, there is a subgroup
  $\Omega_n^+(B\Gamma,B\Lambda)$ such that $X$ admits a metric of
  {\psc} which is a product metric in a neighborhood of the boundary
  if and only if the bordism class $c^{\Gamma,\Lambda}_*([X, \p X])$
  lies in this subgroup.  (Again, this is a subgroup and not just a
  subset, for the same reasons as in the proof of Theorem
  \ref{thm:relko}.)
  We just need to show that $\Omega_n^+(B\Gamma,B\Lambda)$ contains the
  kernel of
  $\Omega_n(B\Gamma,B\Lambda)\to H_n(B\Gamma,B\Lambda; \bZ)$.
  As in the spin case, it suffices to do the calculation separately
  first after localizing at $2$ and then after inverting $2$.  If we
  localize at $2$, then it is known\footnote{This is really due to
    Wall \cite{MR120654}, but one can find it more explicitly in
    \cite{MR431142}.}
  that $\mathsf{MSO}$
  becomes an Eilenberg-Mac Lane spectrum, or in other words, the
  Atiyah-Hirzebruch spectral sequence
  $H_p(Z,Y;\Omega_q)\Rightarrow \Omega_{p+q}(Z,Y)$ always collapses
  after localizing at $2$, and
  $\Omega_n(Z,Y)_{(2)}=\bigoplus_{p+q=n} H_p(Z,Y;\Omega_{(2),q})$ for any $Z$
  and $Y$. Apply this with $Z=B\Gamma$ and $Y=B\Lambda$.
  Since Gromov and Lawson showed \cite{MR577131} that
  $\Omega_*$ has a set of additive generators which are closed oriented
  manifolds of {\psc}, the result localized at $2$ clearly follows.
  The result after inverting $2$ follows from \cite{MR3078256}, just as
  in the spin case.
\end{proof}

Now we can give a complete answer to Question \ref{q:relobstr} when the
relevant fundamental groups are nice enough.  First we need to discuss
an obstruction to a positive answer to \ref{q:relobstr}.
This involves {\Ca}ic $K$-theory.
We always work over $\bR$ instead of over $\bC$,
though this makes very little difference in the formal structure of the 
argument. $C^*$-algebras of fundamental \emph{groupoids} may be unfamiliar 
to many readers, but note from the theory of groupoid $C^*$-algebras
\cite{MR873460} that for $X$ a nice
path-connected locally compact space such as a connected manifold,
$C^*(\pi(X))$ is strongly Morita equivalent to the group $C^*$-algebra
$C^*(\pi_1(X,x_0))$, for any choice of a basepoint $x_0$ in $X$.
In fact $C^*(\pi(X))\cong C^*(\pi_1(X,x_0))\otimes \cK$, where $\cK$
is the algebra of compact operators.

The following result is really due to Chang, Weinberger and Yu
\cite[Theorem 2.18]{MR4045309} and to
Schick and Seyedhosseini \cite[Theorem 5.2]{MR4300165}; the following
is just a slight repackaging of their results in our current language.
\begin{theorem}[Obstruction to PSC with Product Structure on the Boundary]
\label{thm:obstrprod}  
Suppose $(X, \p X)$ is a connected compact spin manifold
of dimension $n$ with non-empty boundary $\p X$. Let
$\Gamma=\pi(X)$ and $\Lambda=\pi(\p X)$.  Suppose
that $\Gamma$ and $\Lambda$ both satisfy injectivity of the
$KO$-assembly map $A_\Gamma\co KO_*(B\Gamma)\to KO_*(C^*(\Gamma))$ and
injectivity of the periodization map
$\per_\Gamma\co ko_*(B\Gamma)\to KO_*(B\Gamma)$
{\lp}and similarly for $\Lambda${\rp}.
Also assume injectivity of the periodization map
$\per_{\Gamma,\Lambda}\co ko_n(B\Gamma,B\Lambda)\to KO_n(B\Gamma,B\Lambda)$.
Then vanishing of the
class $c^{\Gamma,\Lambda}_*([X, \p X])\in ko_n(B\Gamma,B\Lambda)$
is necessary for $X$
to admit a Riemannian metric of {\psc} which is a product metric in
a neighborhood of the boundary.
\end{theorem}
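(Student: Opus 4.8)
The plan is to import the relative higher-index theorem of Chang--Weinberger--Yu \cite{MR4045309} and Schick--Seyedhosseini \cite{MR4300165} and then translate its conclusion --- which naturally lives in the $K$-theory of a mapping-cone $C^*$-algebra --- back into $ko_n(B\Gamma,B\Lambda)$ by means of the three injectivity hypotheses. Let $\phi\co C^*(\Lambda)\to C^*(\Gamma)$ be the $*$-homomorphism induced by the functor $\Lambda\to\Gamma$ of fundamental groupoids coming from $\p X\hookrightarrow X$, and let $C_\phi$ be its mapping cone, so that the long exact sequence
\[
\cdots\to KO_n(C^*(\Lambda))\xrightarrow{\phi_*}KO_n(C^*(\Gamma))\to KO_n(C_\phi)\to KO_{n-1}(C^*(\Lambda))\xrightarrow{\phi_*}\cdots
\]
matches, under the assembly maps, the long exact homology sequence of the pair $(B\Gamma,B\Lambda)$; this produces a \emph{relative assembly map} $A_{\Gamma,\Lambda}\co KO_n(B\Gamma,B\Lambda)\to KO_n(C_\phi)$. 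The spin structure on $X$, its restriction to $\p X$, and the Mishchenko bundles $\cL_\Gamma$ on $X$, $\cL_\Lambda=\cL_\Gamma|_{\p X}$ on $\p X$, determine a relative higher index class $\operatorname{Ind}_{\mathrm{rel}}(X,\p X)\in KO_n(C_\phi)$, and the relative index theorem of \cite{MR4045309,MR4300165} identifies it with $A_{\Gamma,\Lambda}\bigl(\per_{\Gamma,\Lambda}(c^{\Gamma,\Lambda}_*([X,\p X]))\bigr)$. Passing between the groupoid language and the group language of those papers is cosmetic, via the Morita equivalence $C^*(\pi(X))\cong C^*(\pi_1(X,x_0))\otimes\cK$ recalled above.

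Next I would prove that this class vanishes when $X$ carries a {\psc} metric $g$ that is a product metric near $\p X$. Then $g$ restricts to a {\psc} metric on $\p X$, so the $\cL_\Lambda$-twisted Dirac operator on $\p X$ is invertible; and since $g$ has {\psc} everywhere on the compact manifold $X$ and is cylindrical near $\p X$, attaching a half-cylinder $\p X\times[0,\infty)$ yields a complete metric of uniformly positive scalar curvature that is a genuine product on the end, so the $\cL_\Gamma$-twisted Dirac operator on this cylindrical-end manifold is invertible in the Hilbert-module sense. By \cite[Theorem 2.18]{MR4045309} (equivalently \cite[Theorem 5.2]{MR4300165}) this forces $\operatorname{Ind}_{\mathrm{rel}}(X,\p X)=0$, i.e.\ $A_{\Gamma,\Lambda}\bigl(\per_{\Gamma,\Lambda}(c^{\Gamma,\Lambda}_*([X,\p X]))\bigr)=0$ in $KO_n(C_\phi)$.

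It remains to descend this vanishing to $ko_n(B\Gamma,B\Lambda)$. Since $\per_{\Gamma,\Lambda}$ is injective, it is enough to show that $\per_{\Gamma,\Lambda}(c^{\Gamma,\Lambda}_*([X,\p X]))$ vanishes in $KO_n(B\Gamma,B\Lambda)$. Because $\p X$ admits a {\psc} metric, the first assertion of Theorem \ref{thm:GLRmain} gives $A_\Lambda(\per_\Lambda(c^\Lambda_*([\p X])))=0$, whence, by injectivity of $A_\Lambda$ and $\per_\Lambda$, already $c^\Lambda_*([\p X])=0$ in $ko_{n-1}(B\Lambda)$. By exactness of the $ko$-homology sequence of the pair, $c^{\Gamma,\Lambda}_*([X,\p X])$ is then the image of some $w\in ko_n(B\Gamma)$, and correspondingly $\per_{\Gamma,\Lambda}(c^{\Gamma,\Lambda}_*([X,\p X]))$ maps to $0$ under the connecting map $KO_n(B\Gamma,B\Lambda)\to KO_{n-1}(B\Lambda)$. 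One then chases the commuting ladder whose rows are the long exact sequences of $(B\Gamma,B\Lambda)$ in periodic $KO$-homology and of $\phi$ in $C^*$-algebra $K$-theory, with vertical maps the assembly maps: the class $A_\Gamma(\per_\Gamma w)\in KO_n(C^*(\Gamma))$ dies in $KO_n(C_\phi)$, hence lies in the image of $\phi_*$; since the boundary term has already been killed using $\Lambda$ and $A_\Gamma$ is injective, this pins $\per_\Gamma(w)$ into the image of $KO_n(B\Lambda)\to KO_n(B\Gamma)$. Therefore $\per_{\Gamma,\Lambda}(c^{\Gamma,\Lambda}_*([X,\p X]))=0$, and injectivity of $\per_{\Gamma,\Lambda}$ finally gives $c^{\Gamma,\Lambda}_*([X,\p X])=0$ in $ko_n(B\Gamma,B\Lambda)$.

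The genuinely non-formal input is the second paragraph together with the relative index theorem of the first: everything rests on the analytic machinery of \cite{MR4045309,MR4300165}, so the main task is bookkeeping --- checking that the relative index class there is exactly $A_{\Gamma,\Lambda}\circ\per_{\Gamma,\Lambda}$ of our relative $ko$-fundamental class, with compatible sign and degree conventions between the mapping-cone sequence for $\phi$ and the long exact sequence of $(B\Gamma,B\Lambda)$, and naturality of $\per$ and of the assembly maps for the connecting homomorphisms. The one subtle point in the descent is that injectivity of the relative assembly map $A_{\Gamma,\Lambda}$ is \emph{not} a purely homological consequence of injectivity of $A_\Gamma$ and $A_\Lambda$; what makes the chase in the third paragraph close up is precisely that $\p X$ is already known to carry {\psc}, so that one only ever needs to control classes already lifted to $ko_n(B\Gamma)$, after which the residual indeterminacy is governed by $\Lambda$ and is annihilated by the injectivity hypotheses for $\Lambda$. (Combined with Theorem \ref{thm:relko}, this yields a clean necessary-and-sufficient criterion whenever periodization and assembly are injective.)
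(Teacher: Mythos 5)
Your first two paragraphs are fine as far as they go: the relative higher-index class in $KO_n(C_\phi)$ is indeed the assembly of $\per_{\Gamma,\Lambda}(c^{\Gamma,\Lambda}_*([X,\p X]))$ and it vanishes when the metric is {\psc} and a product near the boundary. The gap is in the third paragraph, at the line ``since the boundary term has already been killed using $\Lambda$ and $A_\Gamma$ is injective, this pins $\per_\Gamma(w)$ into the image of $KO_n(B\Lambda)\to KO_n(B\Gamma)$.'' What you actually know at that point is that $A_\Gamma(\per_\Gamma w)\in\image(\phi_*)$, say $A_\Gamma(\per_\Gamma w)=\phi_*(z)$ for some $z\in KO_n(C^*(\Lambda))$. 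To conclude that $\per_\Gamma w$ lies in $\image\bigl(KO_n(B\Lambda)\to KO_n(B\Gamma)\bigr)$ you would need $z$ to be in the image of $A_\Lambda$; injectivity of $A_\Lambda$ says nothing about this, and injectivity of $A_\Gamma$ only tells you that \emph{if} $z=A_\Lambda(z')$ then $\per_\Gamma w = i_*(z')$. In other words your chase is exactly an attempt to prove injectivity of the relative assembly map $A_{\Gamma,\Lambda}$ from injectivity of $A_\Gamma$ and $A_\Lambda$, and --- as you yourself flag --- this is false in general. The lift $w$ does not remove the difficulty, because the indeterminacy in $w$ is by $\image(i_*)$, while the constraint you derived lives in $\image(\phi_*)$, and these need not match. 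This is precisely the point of Remark~\ref{rem:maxC*} and Lemma~\ref{lem:inj}: you would need compatible \emph{splittings} of $A_\Gamma$ and $A_\Lambda$ (or surjectivity of $A_\Lambda$, or $K$-amenability plus isomorphism of both assembly maps), none of which is among the hypotheses of the theorem.

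The paper's proof deliberately avoids the mapping-cone group. Attaching the cylindrical end $\p X\times[0,\infty)$ and using that the resulting complete metric has uniformly positive scalar curvature makes the $C^*(\Gamma)$-linear Dirac operator on $\widehat X$ itself Fredholm with vanishing index in the \emph{absolute} group $KO_n(C^*(\Gamma))$. The topological counterpart is the class $\widehat c_X\co KO_n(X,\p X)\to KO_n(B\Gamma)$ (landing in the absolute $KO_n(B\Gamma)$ rather than $KO_n(B\Gamma,B\Lambda)$), which fits into diagram~\eqref{eq:KOexactseq} so that $c^{\Gamma,\Lambda}_*([\Dirac_X])$ is the image of $\widehat c_X([\Dirac_X])$ under $KO_n(B\Gamma)\to KO_n(B\Gamma,B\Lambda)$. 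Then injectivity of $A_\Gamma$ alone kills $\widehat c_X([\Dirac_X])$, and $\per_{\Gamma,\Lambda}$-injectivity finishes. If you want to salvage your mapping-cone route, you must add a hypothesis giving injectivity of the relative assembly map (e.g.\ the compatible-splitting hypothesis of Lemma~\ref{lem:inj}), which strictly strengthens the theorem's stated assumptions.
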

\begin{proof}
  Note that by diagram \eqref{eq:koexactseq}, $c^{\Gamma,\Lambda}_*([X,\p X])$
  maps to $c^\Lambda_*([\p X])\in ko_{n-1}(B\Lambda)$, which since
  $A_\Lambda$ and $\per_\Lambda$ are injective, must vanish by Theorem
  \ref{thm:GLRmain} for $\p X$ to admit a {\psc} metric.  Thus, at a minimum,
  we know that $c^{\Gamma,\Lambda}_*([X,\p X])$ must be in the kernel of the
  boundary map, and thus must be the image of a class in
  $ko_n(B\Gamma)$.
  The rest of the proof can really be found in \cite{MR4045309} and
  \cite{MR4300165}, but we'll just restate the argument in slightly different
  form. Suppose that $X$ has been given a Riemannian metric $g_X$ which 
  is a {\psc} product metric $dt^2+g_{\p X}$ on a neighborhood of the boundary.
  We can attach a metric cylinder $\p X\times [0, \infty)$ to $X$
  along $\p X$, using the metric $dt^2+g_{\p X}$ on the cylindrical end.
  The noncompact manifold $\widehat X=X\cup_{\p X}\p X\times [0, \infty)$,
  equipped with the complete metric $g_{\widehat X}$ obtained by patching
  together $g_X$ and the product metric $dt^2+g_{\p X}$ on the cylindrical
  end, has uniformly positive scalar curvature off a compact set.
  Therefore $\Dirac_{\widehat X}$ is invertible off a compact set
  and thus Fredholm. Since $\widehat X$ is diffeomorphic to
  $\mathring X = X\smallsetminus \p X$  (it is just $\mathring X$
  with a collar attached to the boundary), $\Dirac_{\widehat X}$ together with
  the multiplication action of $C^{\bR}_0(\widehat X)$
  define a class in $KO_n(X,\p X)\cong KKO(C^{\bR}_0(\widehat X), \Cl_n)$,
  where $\Cl_n$ is the real Clifford algebra acting on the
  spinor bundle, and this class is an analytic representative for the class
  $\per_{X,\p X}[X,\p X]\in KO_n(X,\p X)$. For future use, note that
  the inclusion
  $\widehat X\hookrightarrow X\cup_{\p X}\p X\times [0, \infty]\simeq X$
  (where the final $\simeq$ denotes ``is homotopy equivalent to'')
  gives us a class $\widehat c_X\in KKO(C^{\bR}_0(\widehat X), C(B\Gamma))$.
  (If there is no finite model for $B\Gamma$, this is interpreted as
  $\varinjlim  KKO(C^{\bR}_0(\widehat X), C(Z))$, as $Z$ runs over
  finite subcomplexes of $B\Gamma$.)

  Now assume in addition that $g_X$ has {\psc} everywhere, and
  consider the Mishchenko-Fomenko index problem for $\Dirac_{\widehat X}$
  with coefficients in the Mishchenko-Fomenko flat $C^*(\Gamma)$-bundle
  over $\widehat X$.  Positivity of the scalar curvature and flatness
  of the bundle guarantee that the operator has vanishing
  index in the sense of $C^*(\Gamma)$-linear elliptic operators.
  By the Mishchenko-Fomenko Index Theorem, this $C^*$-index
  is the image of $[\Dirac_{\widehat X}]\in KO_n(X,\p X)$ under
  $\widehat c_X\co KO_*(X,\p X)\to KO_*(B\Gamma)$, followed by the assembly map
  $A_\Gamma$ (see for example \cite{MR842428}).  Since the
  assembly map is injective, we have $\widehat c_X([\Dirac_{\widehat X}])=0$
  in $KO_n(B\Gamma)$. Recall that we are trying to show that
  $c^{\Gamma,\Lambda}_*([X,\p X])=0$ in $ko_n(B\Gamma,B\Lambda)$.
  Since we are assuming $\per_{\Gamma,\Lambda}$ is injective, it suffices to show
  that $\per_{\Gamma,\Lambda}\circ c^{\Gamma,\Lambda}_*([X,\p X])=0$ in
  $KO_n(B\Gamma,B\Lambda)$. Now chase the commutative diagram:
  \begin{equation}
\label{eq:KOexactseq}
\xymatrix{
  ko_n(X) \ar[rr] \ar^{c^\Gamma_*}[dr] \ar^{\per_X}[dd] &
  & ko_n(X, \p X)  \ar^{c^{\Gamma,\Lambda}_*}[dr]
  \ar^(.7){\per_{X,\p X}}[dd] & \\
  &   ko_n(B\Gamma) \ar[rr]\ar^(.7){\per_\Gamma}[dd] & &
  ko_n(B\Gamma, B\Lambda)\ar^{\per_{\Gamma,\Lambda}}[dd] \\
  KO_n(X) \ar[rr] \ar^{c^\Gamma_*}[dr]  &
  & KO_n(X, \p X)  \ar^{c^{\Gamma,\Lambda}_*}[dr]   \ar^{\widehat c_X}[dl]& \\
  &   KO_n(B\Gamma) \ar[rr] & & KO_n(B\Gamma, B\Lambda).}
  \end{equation}
   
  The class whose vanishing we are trying to show lies in the group
  in the lower right, and is the same as $c^{\Gamma,\Lambda}_*([\Dirac_X])$,
  where $[\Dirac_X]\in KO_n(X,\p X)$.  Since this is the
  image of $\widehat c_X([\Dirac_X])=0$ in $KO_n(B\Gamma)$,
  this class vanishes, as desired.
\end{proof}

\begin{remark}
\label{rem:maxC*}
It might be useful to remark that the injectivity assumptions in Theorem
\ref{thm:obstrprod} on the periodization and assembly maps don't necessarily
have to hold in all degrees, just in the degrees where the relevant
indices appear ($n$ for $X$ and $n-1$ for $\p X$).

One can also repackage the argument in the proof using not just group
or groupoid {\Ca}s but also certain relative {\Ca}s.
A complication in doing that is that the
reduced group {\Ca} $C^*_r$ is functorial for injective group homomorphisms,
but not for surjective group homomorphisms.  Indeed, for some
discrete groups $G$, $C^*_r(G)$ is known to be simple (see
\cite{MR4174855} for a characterization of when this happens), hence there
cannot be a morphism $C^*_r(G)\to \bR$ corresponding to the
map of groups $G\twoheadrightarrow \{1\}$.  We can get around
this problem by defining $C^*(\Gamma, \Lambda)$ to be the mapping
cone (see \cite[\S1]{MR658514} of the map on full $C^*$-algebras
$C^*(\Lambda)=\bigoplus_j C^*(\Lambda_j)\to C^*(\Gamma)$
induced by the morphism of groupoids $\Lambda\to\Gamma$ coming from
the inclusion of $\p X$ into $X$.  Note that we are using the
\emph{maximal} groupoid $C^*$-algebra here, and are working with
\emph{real} {\Ca}s throughout.
Thus there is a canonical short exact sequence
\[
0\to C_0(\bR)\otimes C^*(\Gamma)\to C^*(\Gamma, \Lambda)
\to C^*(\Lambda)\to 0,
\]
making the $KO$-spectrum of $C^*(\Gamma, \Lambda)$ into the
homotopy fiber of the map of $KO$-spectra associated to
$C^*(\Lambda)\to C^*(\Gamma)$.  Then we can think of the idea of the
proof as dealing with vanishing of a ``relative index''
in $KO_n(C^*(\Gamma, \Lambda))$.  Note that injectivity of the
$KO$-assembly map for the maximal {\Ca} follows from injectivity of the
$KO$-assembly map for the reduced {\Ca}, and thus is automatic
for torsion-free groups satisfying the Baum-Connes Conjecture.
Injectivity of the $KO$-assembly map for the relative {\Ca} is
a more obscure condition, but it holds by diagram chasing if
$\Gamma$ and the $\Lambda_j$ are $K$-amenable and the assembly maps
for each of them are isomorphisms (for example if they are free, free abelian,
or surface groups).
\end{remark}

Putting Theorems \ref{thm:relko} and \ref{thm:obstrprod} together,
we obtain:
\begin{corollary}
\label{cor:NandSpscprod}
Suppose $(X, \p X)$ is a connected compact spin manifold
of dimension $n\ge 6$ with non-empty boundary $\p X$. Let
$\Gamma=\pi(X)$ and $\Lambda=\pi(\p X)$.  Suppose
that $\Gamma$, the $\Lambda_j$, and the pair $(\Gamma,\Lambda)$
satisfy both injectivity of the $KO$-assembly map
$A_\Gamma\co KO_*(B\Gamma)\to KO_*(C^*(\Gamma))$ and
injectivity of the periodization map
$\per_\Gamma\co ko_*(B\Gamma)\to KO_*(B\Gamma)$
{\lp}and similarly for the $\Lambda_j$ and the periodization map
for the pair{\rp}.  Then vanishing of the class
$c_*([X, \p X])\in ko_n(B\Gamma,B\Lambda)$ is necessary and sufficient for
$X$ to admit a Riemannian metric of {\psc} which is a product metric in
a neighborhood of the boundary.
\end{corollary}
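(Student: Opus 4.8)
The corollary is a direct bolt-together of the two preceding results, so the proof is short; the only ``work'' is checking that the hypotheses of Theorem~\ref{thm:relko} and Theorem~\ref{thm:obstrprod} are both subsumed by the hypotheses stated here. I would begin by noting that the injectivity assumptions in the corollary are precisely those needed for Theorem~\ref{thm:obstrprod}: injectivity of $A_\Gamma$, of $\per_\Gamma$, of the corresponding maps for each $\Lambda_j$, and of the periodization map $\per_{\Gamma,\Lambda}\co ko_n(B\Gamma,B\Lambda)\to KO_n(B\Gamma,B\Lambda)$ for the pair. (For the $\Lambda_j$ one should observe that $B\Lambda\simeq\coprod_j B\Lambda_j$, so $A_\Lambda$ and $\per_\Lambda$ split as direct sums over $j$, and injectivity for each summand gives injectivity for $\Lambda$.)

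**Necessity.** This is immediate: Theorem~\ref{thm:obstrprod} says that under exactly these injectivity hypotheses, if $X$ admits a {\psc} metric that is a product metric near $\p X$, then $c^{\Gamma,\Lambda}_*([X,\p X])=0$ in $ko_n(B\Gamma,B\Lambda)$.

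**Sufficiency.** This requires nothing beyond Theorem~\ref{thm:relko}, which has no injectivity hypotheses at all: if the class $c^{\Gamma,\Lambda}_*([X,\p X])$ vanishes in $ko_n(B\Gamma,B\Lambda)$, then $X$ carries a {\psc} metric that is a product metric in a collar of $\p X$. One should remark that Theorem~\ref{thm:relko} requires $n\ge 6$ and $(X,\p X)$ spin, both of which are hypotheses of the corollary.

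**Expected obstacle.** There is essentially no mathematical obstacle here; the corollary is a formal consequence. The only point that needs a word of care is the bookkeeping about groupoids versus groups: the maps $A_\Gamma$, $\per_\Gamma$ in the corollary are phrased for $B\Gamma$ with $\Gamma$ the fundamental \emph{groupoid}, and one should make sure that the statement matches what Theorem~\ref{thm:obstrprod} actually uses. Since $C^*(\pi(X))$ is Morita equivalent to $C^*(\pi_1(X,x_0))$ (as noted in the text preceding Theorem~\ref{thm:obstrprod}), and $KO$-homology and periodization are insensitive to replacing a space by a homotopy-equivalent one, these identifications are harmless, but it is worth flagging. Concretely, the proof is:

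\begin{proof}
  By Theorem~\ref{thm:obstrprod} (whose injectivity hypotheses are
  exactly those assumed here, noting that $B\Lambda\simeq\coprod_j
  B\Lambda_j$, so that injectivity of $A_\Lambda$ and $\per_\Lambda$
  follows from the corresponding statements for the $\Lambda_j$),
  vanishing of $c^{\Gamma,\Lambda}_*([X,\p X])$ in
  $ko_n(B\Gamma,B\Lambda)$ is necessary for $X$ to admit a {\psc}
  metric which is a product metric in a neighborhood of the boundary.
  Conversely, by Theorem~\ref{thm:relko}, which requires only that
  $n\ge 6$ and that $(X,\p X)$ be spin, vanishing of that class is
  sufficient.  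Hence the condition is necessary and sufficient.
\end{proof}
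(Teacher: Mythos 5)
Your proposal is correct and matches the paper's own proof, which simply observes that the corollary is the amalgamation of Theorems \ref{thm:relko} and \ref{thm:obstrprod}. Your extra bookkeeping about the hypotheses and the groupoid-versus-group identifications is harmless and accurate.
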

\begin{proof}
  This is simply the amalgamation of Theorems
  \ref{thm:relko} and \ref{thm:obstrprod}.
\end{proof}
\begin{remark}
\label{rem:inj}
One might wonder how generally the hypotheses of Theorem
\ref{thm:obstrprod}  are valid.  The injectivity of the periodization map
usually has to be checked by an \emph{ad hoc} comparison of the
Atiyah-Hirzebruch spectral sequences for $ko_*$ and $KO_*$.
It usually fails in general for groups with torsion, but sometimes
it may hold in certain special dimensions.

For the hypothesis about injectivity of the assembly map, more general
techniques often apply for the groupoids $\Gamma$ and $\Lambda_j$, and then
for the relative groups one can apply the following simple lemma.
\end{remark}
\begin{lemma}
\label{lem:inj}
Suppose one has compatible splitting maps for $A_\Gamma$ and $A_\Lambda$,
i.e., there are splitting maps
$s_\Lambda\co KO_*(C^*(\Lambda))\to KO_*(B\Lambda)$ for $A_\Lambda$ and
$s_\Gamma\co KO_*(C^*(\Gamma))\to KO_*(B\Gamma)$ for $A_\Gamma$ such that
\[
\xymatrix{KO_*(B\Lambda) \ar[r] & KO_*(B\Gamma)\\
  KO_*(C^*(\Lambda)) \ar[r] \ar[u]^{s_\Lambda}
  & KO_*(C^*(\Gamma))\ar[u]^{s_\Gamma}}
\]
commutes.  Then the relative assembly map $A_{(\Gamma,\Lambda)}$
is injective. 
\end{lemma}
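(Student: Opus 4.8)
The plan is to realize both sides of the relative assembly map as long exact sequences, glue them into a commuting ladder by naturality of the assembly map, and then run a five-lemma style diagram chase in which the two splittings enter only through the identities $s_\Gamma\circ A_\Gamma=\id$, $s_\Lambda\circ A_\Lambda=\id$ and the commuting square of the hypothesis.

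First I would set up the ladder. On the topological side there is the long exact sequence of the pair $(B\Gamma,B\Lambda)$ in $KO$-homology,
\[
\cdots\to KO_*(B\Lambda)\xrightarrow{i}KO_*(B\Gamma)\xrightarrow{j}KO_*(B\Gamma,B\Lambda)\xrightarrow{\partial}KO_{*-1}(B\Lambda)\to\cdots ,
\]
the map $i$ being induced by $\p X\hookrightarrow X$. On the analytic side, by Remark~\ref{rem:maxC*} the short exact sequence $0\to C_0(\bR)\otimes C^*(\Gamma)\to C^*(\Gamma,\Lambda)\to C^*(\Lambda)\to 0$ (the mapping-cone sequence of the induced $*$-homomorphism $f\co C^*(\Lambda)\to C^*(\Gamma)$) gives the long exact sequence
\[
\cdots\to KO_*(C^*(\Lambda))\xrightarrow{f}KO_*(C^*(\Gamma))\xrightarrow{\delta}KO_*(C^*(\Gamma,\Lambda))\xrightarrow{\iota}KO_{*-1}(C^*(\Lambda))\to\cdots .
\]
Naturality of the $KO$-assembly map with respect to the morphism $\Lambda\to\Gamma$ turns $A_\Lambda$, $A_\Gamma$ and the relative assembly map $A_{(\Gamma,\Lambda)}$ into a morphism from the first sequence to the second; commutativity of every square of this ladder is exactly what it means for $A_{(\Gamma,\Lambda)}$ to be defined. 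I expect the only point needing genuine care (as opposed to formal manipulation) to be this step — namely checking that assembly carries the topological pair/mapping-cone structure to the $C^*$-algebraic one — but this is essentially the naturality already implicit in Theorem~\ref{thm:obstrprod} and in \cite{MR4045309,MR4300165}; the rest is a routine chase.

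Now the chase. From $s_\Lambda\circ A_\Lambda=\id$ and $s_\Gamma\circ A_\Gamma=\id$, the maps $A_\Lambda$ and $A_\Gamma$ are split monomorphisms. Let $x\in KO_*(B\Gamma,B\Lambda)$ with $A_{(\Gamma,\Lambda)}(x)=0$. Commutativity of the ladder square at $\partial$ gives $A_\Lambda(\partial x)=\iota\bigl(A_{(\Gamma,\Lambda)}(x)\bigr)=0$, hence $\partial x=0$ since $A_\Lambda$ is injective, and exactness of the top row yields $x=j(y)$ for some $y\in KO_*(B\Gamma)$. Commutativity of the ladder square at $j$ then gives $\delta\bigl(A_\Gamma(y)\bigr)=A_{(\Gamma,\Lambda)}\bigl(j(y)\bigr)=A_{(\Gamma,\Lambda)}(x)=0$, so by exactness of the bottom row $A_\Gamma(y)=f(z)$ for some $z\in KO_*(C^*(\Lambda))$. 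Applying $s_\Gamma$, using $s_\Gamma\circ A_\Gamma=\id$, and invoking the hypothesized commuting square $i\circ s_\Lambda=s_\Gamma\circ f$, we get $y=s_\Gamma\bigl(A_\Gamma(y)\bigr)=s_\Gamma\bigl(f(z)\bigr)=i\bigl(s_\Lambda(z)\bigr)$, so $y\in\image(i)=\ker(j)$ by exactness. Therefore $x=j(y)=0$, and $A_{(\Gamma,\Lambda)}$ is injective. (In fact the same data even produce a splitting of $A_{(\Gamma,\Lambda)}$ by the usual comparison of mapping cones, but only injectivity is asserted and used here.)
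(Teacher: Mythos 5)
Your proof is correct and follows essentially the same route as the paper: the identical diagram chase through the ladder of long exact sequences, using injectivity of $A_\Lambda$ to kill $\partial x$, lifting to $y\in KO_*(B\Gamma)$, and then using $s_\Gamma$, exactness of the bottom row, and the hypothesized commuting square to show $y$ lies in the image of $KO_*(B\Lambda)$. The extra care you take in setting up the mapping-cone long exact sequence and the naturality of assembly is left implicit in the paper but is the right justification.
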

\begin{proof}
  Chase the diagram
\[
\xymatrix{
  KO_n(B\Lambda) \ar[r] \ar[d]^{A_\Lambda}
  & KO_n(B\Gamma) \ar[r] \ar[d]^{A_\Gamma}
  & KO_n(B\Gamma,B\Lambda) \ar[r]^\p \ar[d]^{A_{\Gamma,\Lambda}}
  & KO_{n-1}(B\Lambda) \ar[r] \ar[d]^{A_\Lambda} & \cdots \\
  KO_n(C^*(\Lambda)) \ar[r] \ar@/^/[u]^{s_\Lambda}
  & KO_n(C^*(\Gamma))\ar[r] \ar@/^/[u]^{s_\Gamma}
  & KO_n(C^*(\Gamma,\Lambda)) \ar[r]^\p &
  KO_{n-1}(C^*(\Lambda))\ar[r]  \ar@/^/[u]^{s_\Lambda} & \,\,\cdots\,\,.
}
\]
If $x\in KO_n(B\Gamma,B\Lambda)$ goes to $0$ under ${A_{\Gamma,\Lambda}}$,
then by commutativity of the right square and injectivity of
$A_\Lambda$, it maps under $\p$ to $0$ in $KO_{n-1}(B\Lambda)$, and hence
comes from a class $y\in KO_n(B\Gamma)$. But $y=s_\Gamma\circ A_\Gamma(y)$,
and $A_\Gamma(y)$ maps to $0$, so $A_\Gamma(y)$ is the image of some
$z\in KO_n(C^*(\Lambda))$.  By commutativity of the diagram in the
statement of the Lemma, $y$ is the image of $s_\Lambda(z)$,
and thus $x$, the image of $y$, must vanish.
\end{proof}
One can apply Lemma \ref{lem:inj} in the following context.  Suppose
all the fundamental groups belong to a class of groups for
which one can prove split injectivity of the $KO$-theory assembly
map in a functorial way.  There are many such classes, using ``dual
Dirac'' methods or embeddings into Hilbert spaces.  One may also
have to assume injectivity of the map $\Lambda\to \Gamma$
(at least if one just assumes that $\Gamma$ is coarsely embeddable into
a Hilbert space).  Then the hypothesis
of Lemma \ref{lem:inj} holds and one gets injectivity of the relative
assembly map as well.

\section{More Complicated Cases with Non-trivial Fundamental Groups}
\label{sec:gencase}
Now we move on to the more complicated situation, where $\p X$ is not
necessarily connected, or $\p X$ is connected but
the inclusion $\p X\hookrightarrow X$ does not induce
an isomorphism on fundamental groups.  In the latter
case, the conclusions
of Theorems \ref{thm:reloneconn}, \ref{thm:sc}, and \ref{thm:relko}
have to fail in some situations,
as one can see from the following simple example (mentioned also in
\cite{BH}).  Let $X = T^{n-2}\times D^2$, which has boundary the
$(n-1)$-torus $\p X= T^{n-1}$. The double of $X$ is
$M=\Dbl(X, \p X)=T^{n-2}\times S^2$, which obviously has a metric of
{\psc} (because of the $S^2$ factor). On the other hand, $\p X$ cannot
have a {\psc} metric.  It is also clear (since one can give $T^{n-2}$
a flat metric and give $D^2$ the metric corresponding to a spherical
cap around the north pole of a standard $2$-sphere, either close to
the pole or extending beyond the equator) that one can arrange for
$X$ to have a {\psc} metric so that the mean curvature $H$ of
$\p X$ is either strictly positive or strictly negative.  So for this
example, of the six conditions in Theorem \ref{thm:sc}, (1), (5), and (6)
hold, and (2) and (4) fail.  ((3) holds but does not say much,
since the $\alpha$-invariant gives insufficient information in the
non-simply connected case.)

\begin{definition}
\label{def:incompr}
Extending standard terminology from the case of $3$-manifolds,
we say that $X$ has \emph{incompressible boundary} if for each
component $\p_j X$ of $\p X$, the map $\pi_1(\p_j X)\to \pi_1(X)$
induced by the inclusion is injective.
\end{definition}

The incompressible case is the easiest case and also the one in which
we get the strongest conclusion.

\begin{theorem}
\label{thm:inccase}
Let $X$ be a connected compact spin manifold of dimension $n\ge 6$ with
non-empty incompressible boundary $\p X$. $X$ can have any number
$k$ of boundary components, $\p_1 X,\cdots,\p_k X$. Let $M=\Dbl(X,\p X)$.
Pick basepoints in each boundary component of $X$
and let $\Gamma=\pi_1(X)$ {\lp}this can be
with respect to any choice of basepoint{\rp}, $\Lambda_j=\pi_1(\p_j X)$,
and $\Lambda=\coprod_j \Lambda_j$.  Suppose that injectivity of the
periodization map $\per$ and of the $KO$-assembly map $A$ both
hold for the fundamental group $\Gamma*_{\Lambda}\Gamma$ of $M$.
Then the following conditions are
equivalent:
\begin{enumerate}
\item $M$ admits a metric of \psc.
\item $X$ admits a {\psc} metric which is a product metric in
  a neighborhood of the boundary $\p X$.
\item $X$ admits a {\psc} metric for which $\p X$ is minimal
  {\lp}i.e., has vanishing mean curvature{\rp}.
\item $X$ admits a {\psc} metric which gives $\p X$ positive mean curvature
  with respect to the outward normal.
\end{enumerate}
\end{theorem}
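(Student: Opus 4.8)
The plan is to prove the chain of implications $(2)\Rightarrow(3)\Rightarrow(4)\Rightarrow(1)\Rightarrow(2)$, reusing the easy geometric steps from Theorem~\ref{thm:sc} and reducing the genuinely new content to a $ko$-homology computation combined with Theorem~\ref{thm:relko} and Theorem~\ref{thm:obstrprod}. The implications $(2)\Rightarrow(3)$ and $(4)\Rightarrow(1)$ are immediate: taking a product metric near $\p X$ makes $\p X$ totally geodesic, hence minimal, and Theorem~\ref{thm:GLdoubling} (in its $H\ge 0$ form, valid by \cite[Corollary 34]{BH}) gives {\psc} on $M$ from $H\ge 0$ on $\p X$. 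For $(3)\Rightarrow(4)$ one invokes \cite[Theorem 33]{BH}: on a compact manifold with boundary, a {\psc} metric with $H\ge 0$ (with respect to the outward normal) can be deformed to one with $H>0$; a minimal boundary is the case $H=0$. So the crux is the single implication $(1)\Rightarrow(2)$.

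For $(1)\Rightarrow(2)$, the point of the incompressibility hypothesis is that $\pi_1(M)=\Gamma *_\Lambda \Gamma$ by Van Kampen, where the amalgamation is along the injections $\Lambda_j\hookrightarrow\Gamma$; this is exactly the fundamental group for which we are assuming injectivity of $\per$ and of the assembly map $A$. First I would observe that, since $M$ is spin and bounds (it is the boundary of $X\times I$ with corners rounded), and since by the Gromov--Lawson--Rosenberg package \cite[Theorem 4.11]{MR1818778} (or \cite{MR3078256}) together with the assumed injectivity hypotheses, the existence of a {\psc} metric on $M$ forces $c^{\Gamma *_\Lambda\Gamma}_*([M])=0$ in $ko_n(B(\Gamma*_\Lambda\Gamma))$. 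Next, $B(\Gamma*_\Lambda\Gamma)$ is the homotopy pushout $B\Gamma \cup_{B\Lambda} B\Gamma$, and the reflection symmetry of the double identifies this pushout with the double mapping cylinder; there is a Mayer--Vietoris sequence
\begin{equation*}
\cdots \to ko_n(B\Lambda)\to ko_n(B\Gamma)\oplus ko_n(B\Gamma)\to ko_n(B(\Gamma*_\Lambda\Gamma))\to ko_{n-1}(B\Lambda)\to\cdots
\end{equation*}
and under the map $M\to B(\Gamma*_\Lambda\Gamma)$, the half of $M$ coming from one copy of $X$ maps into one $B\Gamma$. The geometric decomposition $M=X\cup_{\p X}(-X)$ shows that $c_*([M])$ is the image, under the boundary-type map in the Mayer--Vietoris/pushout long exact sequence, of the relative class $c^{\Gamma,\Lambda}_*([X,\p X])\in ko_n(B\Gamma,B\Lambda)$ — more precisely, excision identifies $ko_n(B\Gamma,B\Lambda)$ with the relevant summand and the class $[M]$ is detected by $[X,\p X]-(-1)^{?}[X,\p X]$; one needs to track signs carefully, but because the two halves are glued by an orientation-reversing diffeomorphism along $\p X$, the class $c_*([M])$ equals the image of $c^{\Gamma,\Lambda}_*([X,\p X])$ under the natural map $ko_n(B\Gamma,B\Lambda)\to ko_n(B(\Gamma*_\Lambda\Gamma))$, and incompressibility makes this last map \emph{injective} (it is the inclusion of a wedge-type summand in the Mayer--Vietoris splitting). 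Hence $c_*([M])=0$ implies $c^{\Gamma,\Lambda}_*([X,\p X])=0$ in $ko_n(B\Gamma,B\Lambda)$, and then Theorem~\ref{thm:relko} gives a {\psc} metric on $X$ which is a product near the boundary, i.e.\ (2).

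The main obstacle I expect is the homological bookkeeping in the previous paragraph: verifying that, under incompressibility, the map $ko_n(B\Gamma,B\Lambda)\to ko_n(B(\Gamma*_\Lambda\Gamma))$ is injective and that the fundamental class $[M]$ maps exactly to (a unit times) the image of the relative fundamental class $[X,\p X]$. The injectivity should follow because, for $B(\Gamma*_\Lambda\Gamma)=B\Gamma\cup_{B\Lambda}B\Gamma$, the reflection automorphism $\tau$ gives a splitting of the inclusion of either $B\Gamma$, so the long exact sequence of the pair $\bigl(B(\Gamma*_\Lambda\Gamma),\,B\Gamma\bigr)$ breaks into short exact sequences, and by excision $ko_*\bigl(B(\Gamma*_\Lambda\Gamma),B\Gamma\bigr)\cong ko_*(B\Gamma,B\Lambda)$; the identification of $c_*[M]$ with the image of $c^{\Gamma,\Lambda}_*[X,\p X]$ is then the statement that the fundamental class of a double, viewed relative to one half, is the relative fundamental class of that half — a standard fact once orientations and the corner-rounding are set up. The remaining subtlety is the degree issue flagged in Remark~\ref{rem:maxC*}: we only need injectivity of $\per$ and $A$ for $\Gamma*_\Lambda\Gamma$ in degree $n$ (and for $\Lambda$ in degree $n-1$, which already follows from the $\Gamma$-hypotheses via incompressibility and Lemma~\ref{lem:inj}), so the hypothesis as stated suffices. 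I would close by noting that Theorem~\ref{thm:obstrprod} is not strictly needed here since $(1)\Rightarrow(2)$ is obtained directly, but it confirms consistency: vanishing of $c^{\Gamma,\Lambda}_*([X,\p X])$ is exactly the obstruction that $(2)$ would force anyway.
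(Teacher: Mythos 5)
Your proof is correct and follows essentially the same strategy as the paper: reduce to the implication $(1)\Rightarrow(2)$ via the $H\ge 0 \Leftrightarrow H>0$ results of B\"ar--Hanke and the Gromov--Lawson doubling theorem, then use the injectivity hypotheses and \cite[Theorem 4.13]{MR1818778} to get $c_*[M]=0$ in $ko_n(B(\Gamma*_\Lambda\Gamma))$, and transfer this to the vanishing of $c^{\Gamma,\Lambda}_*[X,\p X]$ in $ko_n(B\Gamma,B\Lambda)$ so that Theorem~\ref{thm:relko} applies. The only stylistic difference is in the absolute-to-relative passage: you phrase it as the injectivity of a section $ko_n(B\Gamma,B\Lambda)\to ko_n(B(\Gamma*_\Lambda\Gamma))$ (which exists because the folding map splits the inclusion of one copy of $B\Gamma$), whereas the paper simply uses the restriction map $ko_n(B(\Gamma*_\Lambda\Gamma))\to ko_n(B(\Gamma*_\Lambda\Gamma),B\Gamma)\cong ko_n(B\Gamma,B\Lambda)$ (the last isomorphism by excision, valid because incompressibility makes $B(\Gamma*_\Lambda\Gamma)$ the ordinary pushout $B\Gamma\cup_{B\Lambda}B\Gamma$), under which $c_*[M]$ goes to $c^{\Gamma,\Lambda}_*[X,\p X]$ by the naturality of fundamental classes under excision; the restriction direction is cleaner because it avoids the sign/section bookkeeping you flag as the obstacle, and because the statement ``$[M]$ restricts to $[X,\p X]$ under $ko_n(M)\to ko_n(M,-X)\cong ko_n(X,\p X)$'' is immediate, whereas exhibiting $c_*[M]$ as the image of $c^{\Gamma,\Lambda}_*[X,\p X]$ under a section requires additionally noting that $r_*[M]=0$ in $ko_n(X)$ (true since $M=\p(X\times I)$ bounds over $X$). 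In short: same ideas, with your version taking a slightly more roundabout route through the split short exact sequence where a direct restriction suffices.
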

\begin{proof}
  As we have already mentioned, (2) $\Rightarrow$ (3) is trivial,
  (4) $\Rightarrow$ (1) is Theorem
  \ref{thm:GLdoubling} and (4) $\Longleftrightarrow$ (3) $\Rightarrow$ (1)
  is part of \cite[Corollary 34]{BH}. So it suffices to show
  that (1) $\Rightarrow$ (2), and we can try to apply Theorem
  \ref{thm:relko}. 
  Let $[M]$ be the $ko$-fundamental class of $M$ in $ko_n(M)$.
  This restricts (since $M=X\cup_{\p X}(-X)$ and thus
  $\text{int}\,X$ is an open submanifold of $M$)
  to the $ko$-fundamental class $[X,\p X]$ of $X$ in
  $ko_n(X, \p X) = ko_n(M, -X)$ (by excision).  This, in turn,
  maps under the $ko$-homology boundary map to the $ko$-fundamental
  class $[\p X]$ of $\p X$ in $ko_{n-1}(\p X)$. We get a commutative diagram
  \begin{equation}
    \label{eq:koclasses}
    \xymatrix{[M] \ar@{|->}[r] \ar@{}[d] |{\cap\!\!\text{\rule{.3pt}{5pt}}}
      & [X,\p X]\ar@{|->}[r] \ar@{}[d] |{\cap\!\!\text{\rule{.3pt}{5pt}}}
      & [\p X] \ar@{}[d] |{\cap\!\!\text{\rule{.3pt}{5pt}}}\\
      ko_n(M) \ar[r] \ar[d]^{c_M} & ko_n(X, \p X) \ar[r]^\p \ar[d]^{c_X}
      & ko_{n-1}(\p X)\ar[d]^{c_{\p X}}\\
      ko_n(B(\Gamma*_{\Lambda} \Gamma)) \ar[r] &
      ko_n(B\Gamma, B\Lambda)
      \ar[r]^\p       & \, ko_{n-1}(B\Lambda) .}
  \end{equation}
  The commutativity of the square on the right comes from naturality
  of the boundary maps, but the
  incompressibility assumption is needed to get commutativity of
  the square on the left.  Indeed, without this assumption, the commutativity
  must fail; just think of the simple example where $\Gamma$ is
  the trivial group and $\Lambda=\bZ$, and $X=D^2$ with boundary
  $\p X=S^1=B\Lambda$, $M=S^2$.  Then $B(\Gamma*_{\Lambda} \Gamma)=*$, and
  $[M]\mapsto 0$ in $ko_2(*)$ (since $M$ admits {\psc}), while
  $[X,\p X]$ is a generator of
  $ko_2(D^2,S^1)\cong ko_2(\bR^2)\cong ko_0(*)=\bZ$
  and $c_X\co ko_2(D^2,S^1)\to ko_2(*,S^1)$ is an isomorphism.

  So we need to explain why the left-hand square in \eqref{eq:koclasses}
  commutes when we assume incompressibility.  The explanation is
  that $\pi_1(M)$ is the pushout or colimit of the diagram
  $\xymatrix{\Gamma&\Lambda\ar[l]\ar[r]&\Gamma}$ (in the category of
  groupoids), so $B\pi_1(M)$ is the \emph{homotopy pushout} (hocolim)
  of $\xymatrix{B\Gamma&B\Lambda\ar[l]\ar[r]&B\Gamma}$ in spaces.
  This is usually \emph{not} the same as the ordinary pushout since
  the kernel of $\Lambda_j\to\Gamma$ is invisible in the homotopy
  colimit, but it \emph{is} the same when each $\Lambda_j$ injects into
  $\Gamma$, i.e., $\p X$ is incompressible. (See
  \cite[\S10]{MR1361887} for an explanation.)   So in that case we
  get the commutativity via excision for $ko$, since
  $B(\Gamma*_{\Lambda} \Gamma)$ with $B(\Lambda*_{\Lambda} \Gamma)$
  collapsed is the the same as $B\Gamma$ with $B\Lambda$
  collapsed, and hence the relative groups
  $ko_n(B(\Gamma*_{\Lambda} \Gamma),B(\Lambda*_{\Lambda} \Gamma))$
  and $ko_n(B\Gamma,B\Lambda)$ are the same in this case.

  Alternatively, following the point of view in
  \cite[Appendix to Chapter II]{MR1324339}, note that $\pi_1(M)$ is
  associated to the graph of groups shown in Figure \ref{fig:graphgroups}.
  More precisely, the universal cover of this graph is a tree on
  which $\pi_1(M)$ acts, with $\Gamma$ stabilizing the vertices
  and with the edges stabilized by the $\Lambda_j$.  By Bass-Serre
  theory, this action on a tree corresponds to the decomposition
  of $\pi_1(M)$ as $\Gamma*_\Lambda\Gamma$, and displays $\pi_1(M)$ as
  an extension of $\Gamma$ by a free group.  From this picture
  we can also read off $B(\Gamma*_{\Lambda} \Gamma)$ as the
  ordinary pushout of $\xymatrix{B\Gamma&B\Lambda\ar[l]\ar[r]&B\Gamma}$.
  
  \begin{figure}[hbt]
  \[
  \xymatrix{\Gamma\,\bullet \ar@{-} @/^1pc/[r]^{\Lambda_1}
    \ar@{-}[r]|{\text{  }\raisebox{5pt}{$\vdots$}\text{  }}
    \ar@{-} @/_1pc/[r]_{\Lambda_k}
   &  \bullet\,\,\Gamma} 
  \]
  \caption{\label{fig:graphgroups} The graph of groups for computing
    $\pi_1(M)$} 
  \end{figure}
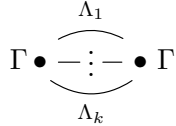 
  Now because of the hypothesis on $\pi_1(M)$ and the assumption that $M$
  admits a metric of {\psc}, we can apply \cite[Theorem 4.13]{MR1818778},
  and deduce that the image of $[M]$ under $c_M$ (the leftmost
  downward arrow in \eqref{eq:koclasses}) vanishes in
  $ko_n(B(\Gamma*_{\Lambda} \Gamma))$.  Chasing the diagram, we see
  that $(c_X)_*([X,\p X])$ and $(c_{\p X})_*([\p X])$ also vanish, so by
  Theorem \ref{thm:relko}, $X$ admits a {\psc} metric which is a product
  metric in a neighborhood of $\p X$.
\end{proof}

We don't quite get the same sort of theorem in the totally non-spin case
since we don't know what all the obstructions are (if any) in this situation
where there is no Dirac operator.  However, we can at least
prove the following.
\begin{theorem}
\label{thm:inccasenonspin}
Let $X$ be a connected compact totally non-spin manifold of dimension
$n\ge 6$ with non-empty incompressible boundary $\p X$, and let
$M=\Dbl(X,\p X)$. There can be any number $k$ of boundary
components $\p_1 X,\cdots,\p_k X$, but assume that each of these
is also totally non-spin.  Pick basepoints in each boundary
component and let $\Gamma=\pi_1(X)$ {\lp}this can be
with respect to any choice of basepoint{\rp}, $\Lambda_j=\pi_1(\p_j X)$.
Suppose that $\Gamma$ has finite homological dimension and that
$n\ge\homdim\Gamma+2$.
Then the following conditions all hold:
\begin{enumerate}
\item $M$ admits a metric of \psc.
\item $X$ admits a {\psc} metric which is a product metric in
  a neighborhood of the boundary $\p X$.
\item $X$ admits a {\psc} metric for which $\p X$ is minimal
  {\lp}i.e., has vanishing mean curvature{\rp}.
\item $X$ admits a {\psc} metric which gives $\p X$ positive mean curvature
  with respect to the outward normal.
\end{enumerate}
\end{theorem}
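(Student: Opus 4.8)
The plan is to establish condition (2) directly, and then read off (3), (4), and (1) from it by the same formal arguments used for Theorem \ref{thm:inccase}. To get (2) I would invoke Theorem \ref{thm:relhom}: its hypotheses ($n\ge 6$, and $X$ and every component of $\p X$ totally non-spin) are exactly ours, so it suffices to show that the relative class $c^{\Gamma,\Lambda}_*([X,\p X])\in H_n(B\Gamma,B\Lambda;\bZ)$ vanishes. I expect that in fact the whole group $H_n(B\Gamma,B\Lambda;\bZ)$ is zero under the stated hypotheses. Since $\homdim\Gamma\le n-2$, group homology vanishes above degree $n-2$, so $H_n(B\Gamma;\bZ)=H_{n-1}(B\Gamma;\bZ)=0$. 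By incompressibility each $\Lambda_j=\pi_1(\p_j X)$ is a subgroup of $\Gamma$, and homological dimension does not increase on passing to subgroups, so $\homdim\Lambda_j\le n-2$ as well; hence $H_{n-1}(B\Lambda_j;\bZ)=0$ for every $j$ and therefore $H_{n-1}(B\Lambda;\bZ)=\bigoplus_j H_{n-1}(B\Lambda_j;\bZ)=0$. The exact sequence of the pair,
\[
H_n(B\Gamma;\bZ)\longrightarrow H_n(B\Gamma,B\Lambda;\bZ)\longrightarrow H_{n-1}(B\Lambda;\bZ),
\]
then has both outer terms zero, so $H_n(B\Gamma,B\Lambda;\bZ)=0$; in particular $c^{\Gamma,\Lambda}_*([X,\p X])=0$ and Theorem \ref{thm:relhom} gives (2).

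With (2) in hand, the rest is formal. A {\psc} metric on $X$ that is a product metric in a collar neighborhood of $\p X$ makes $\p X$ totally geodesic, hence minimal, so (2)$\,\Rightarrow\,$(3). By \cite[Corollary 34]{BH}, a compact manifold carries a {\psc} metric with $H\ge 0$ along the boundary if and only if it carries one with $H>0$; applying this to the minimal (so $H\equiv 0$) metric of (3) yields (4). Finally (4)$\,\Rightarrow\,$(1) is Theorem \ref{thm:GLdoubling}. (One can also see (1) without quoting that theorem: doubling the product metric of (2) across $\p X$ directly produces a smooth {\psc} metric on $M$; or, paralleling Theorem \ref{thm:inccase}, one can observe that $\pi_1(M)=\Gamma*_\Lambda\Gamma$ has homological dimension at most $\homdim\Gamma+1\le n-1$, so $H_n(B\pi_1(M);\bZ)=0$ and the totally non-spin Gromov--Lawson--Rosenberg theorem forces $M$ to admit {\psc}.)

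I do not anticipate a genuine obstacle: unlike Theorem \ref{thm:inccase}, this argument needs no injectivity assumptions on assembly or periodization maps, because in the totally non-spin world the relevant obstruction already lives in integral homology $H_n(B\Gamma,B\Lambda;\bZ)$, and the bound $n\ge\homdim\Gamma+2$ is precisely what kills that group. The only two points deserving a line of justification are the standard monotonicity $\homdim\Lambda_j\le\homdim\Gamma$ under inclusion of subgroups (which is exactly where incompressibility is used) and the degree bookkeeping guaranteeing simultaneously $H_n(B\Gamma)=H_{n-1}(B\Gamma)=0$ and $H_{n-1}(B\Lambda_j)=0$. The real mathematical content is imported wholesale: Theorem \ref{thm:relhom} (hence the Gromov--Lawson surgery theorem and the $2$-local structure of $\mathsf{MSO}$) together with the B{\"a}r--Hanke equivalence of the $H\ge 0$ and $H>0$ boundary conditions.
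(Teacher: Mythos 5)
Your proposal is correct and follows essentially the same route as the paper: the hypotheses $\homdim\Gamma\le n-2$ and incompressibility (so $\homdim\Lambda_j\le\homdim\Gamma$) kill $H_n(B\Gamma,B\Lambda;\bZ)$ via the long exact sequence of the pair, Theorem \ref{thm:relhom} then gives (2), and (3), (4), (1) follow formally. The only cosmetic difference is that the paper deduces (1) directly from $\homdim\pi_1(M)\le\homdim\Gamma+1<n$ and the totally non-spin Stolz--Jung theorem rather than from (4) via Theorem \ref{thm:GLdoubling}, an alternative you also note parenthetically.
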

\begin{proof}
  The description of $\pi_1(M)$ is just as in the proof of
  Theorem \ref{thm:inccase}, so $\Gamma*_\Lambda \Gamma$
  splits as $F\rtimes \Gamma$, where $F$ is a free group.
  That means $\homdim \pi_1(M) \le \homdim\Gamma + 1$.
  Since the $\Lambda_j$'s are subgroups of $\Gamma$, they also have
  finite homological dimension bounded by $\homdim\Gamma$.
  Also, by the long exact sequence of the pair,
  $\homdim(B\Gamma,B\Lambda)\le \max(\homdim\Gamma,\homdim\Lambda+1)< n$.
  So by our assumption on $n$, $c_*[M]=0$ in
  $H_n(B\pi_1(M);\bZ)$, $c_*([\p_j X])=0$ in $H_{n-1}(B\Lambda;\bZ)$,
  and $c_*([X,\p X])=0$ in $H_n(B\Gamma,B\Lambda;\bZ)$. 
  Since $M$ and the $\p_j X$ are all totally-non-spin, we conclude
  by \cite[Theorem 4.11]{MR1818778} or by  \cite[Theorem 1.2]{MR3078256}
  that all of them admit metrics of {\psc}.  Then we obtain
  (2), and thus also (3) and (4), by Theorem \ref{thm:relhom}.
\end{proof}

Now we go on to the non-incompressible case, as well as
to the case where $X$ is not spin but has spin boundary.
In the latter case, even if $X$ and $\p X$ are both simply
connected, we cannot expect $X$ to have a {\psc} metric which
is a product metric near the boundary.  For example, $\p X$
could be an exotic sphere in dimension $\equiv1$ or $2$ mod $8$
with non-zero $\alpha$-invariant.  Then $\p X$ is an oriented
boundary but not a spin boundary, and thus there is an oriented
manifold (which can be chosen simply connected) having $\p X$ as
its boundary. The double of this manifold is simply connected
and non-spin, so it admits {\psc} by \cite[Corollary C]{MR577131}.
In this case, by Theorem \ref{thm:MichelsohnLawson},
$X$ has a {\psc} metric which has positive
mean curvature on the boundary, even though it can't have a
{\psc} metric which is a product metric near the boundary, since $\p X$
does not admit {\psc}. We will see other generalizations of this later.
First we consider the non-incompressible spin case of the converse to
Theorem \ref{thm:GLdoubling}.
\begin{theorem}
\label{thm:gencase}
Let $X$ be a connected compact manifold of dimension $n\ge 6$ with
non-empty boundary $\p X$. There can be any number $k$ of boundary
components $\p_1 X,\cdots,\p_k X$.  Assume that $X$ is spin and
that $\Gamma=\pi_1(X)$ has finite homological dimension less than $n$
and satisfies the Baum-Connes Conjecture with coefficients and the
Gromov-Lawson-Rosenberg conjecture.
{\lp}As an example, $\Gamma$ could be free abelian, free, or a
surface group.{\rp} Let $M=\Dbl(X, \p X)$ be the double
of $X$ along $\p X$, a closed $n$-manifold. Then the following conditions
are equivalent:
\begin{enumerate}
\item $M$ admits a metric of \psc.
\item $X$ admits a {\psc} metric for which $\p X$ is minimal
  {\lp}i.e., has vanishing mean curvature{\rp}.
\item $X$ admits a {\psc} metric which gives $\p X$ positive mean curvature
  with respect to the outward normal.
\end{enumerate}
\end{theorem}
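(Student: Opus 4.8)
The plan is to handle $(2)\Leftrightarrow(3)$ and $(3)\Rightarrow(1)$ exactly as in the earlier theorems and to concentrate on $(1)\Rightarrow(2)$, which is the relevant instance of the Doubling Conjecture. Indeed $(2)\Leftrightarrow(3)$ is part of \cite[Corollary 34]{BH} (together with \cite[Theorem 33]{BH} to pass between $H\ge0$ and $H>0$), and $(3)\Rightarrow(1)$ is Theorem \ref{thm:GLdoubling}. So everything reduces to showing: if $M=\Dbl(X,\p X)$ admits a {\psc} metric, then $X$ admits a {\psc} metric with $H\ge0$ along $\p X$.

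First I would pin down $\pi_1(M)$. By van Kampen's theorem, or via the graph-of-groups picture in the proof of Theorem \ref{thm:inccasenonspin}, $\pi_1(M)\cong\Gamma*_\Lambda\Gamma$, and this group sits in a split extension $1\to F\to\pi_1(M)\to\Gamma\to1$ with $F$ free, so $\pi_1(M)\cong F\rtimes\Gamma$. Hence $\homdim\pi_1(M)\le\homdim\Gamma+1\le n$; each $\Lambda_j$, being a subgroup of $\Gamma$, has $\homdim\Lambda_j<n$; $\pi_1(M)$ inherits the Baum--Connes conjecture with coefficients from $\Gamma$, since that class of groups is closed under extensions by a-T-menable (in particular free) groups; and these facts together with GLR for $\Gamma$ propagate to give GLR for $\pi_1(M)$. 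So by \cite[Theorem 4.13]{MR1818778}, $M$ admits a {\psc} metric if and only if $\alpha_{\pi_1(M)}(M)=0$ in $ko_n(B\pi_1(M))$, and we may assume this class vanishes.

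The heart of the matter is to convert the vanishing of $\alpha_{\pi_1(M)}(M)$ into a relative statement on $(X,\p X)$ that feeds a surgery construction. The target is a mean-convex analogue of Theorem \ref{thm:relko}: the classes in $\Omega_n^\spin(B\Gamma,B\Lambda)$ represented by pairs $(X',\p X')$ with $X'$ carrying a {\psc} metric with $H\ge0$ along $\p X'$ form a subgroup $\Omega_n^{\spin,H\ge0}(B\Gamma,B\Lambda)$ (closed under disjoint union and orientation reversal, as in the proof of Theorem \ref{thm:relko}, now using the handle-attachment Theorem \ref{thm:handleattach} for the boundary moves), and this subgroup coincides with the preimage of the {\psc} bordism subgroup $\Omega_n^{\spin,+}(B\pi_1(M))$ under the double construction. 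The inclusion ``$\subseteq$'' is Theorem \ref{thm:GLdoubling}. For ``$\supseteq$'', the hard direction: since $M=X\cup_{\p X}(-X)$ is the homotopy pushout of $X\leftarrow\p X\to(-X)$, it maps compatibly to the homotopy pushout $P=B\Gamma\cup_{B\Lambda}B\Gamma$, and $P\to B\pi_1(M)$ is the classifying map; using this, diagram \eqref{eq:koclasses}, and the splitting of $B\Gamma\hookrightarrow P$ by the fold map, one sees that $\alpha_{\pi_1(M)}(M)$ is the image of $c_*^{\Gamma,\Lambda}([X,\p X])\in\Omega_n^\spin(B\Gamma,B\Lambda)$ under the relative $\alpha$-invariant followed by a doubling map $ko_n(B\Gamma,B\Lambda)\to ko_n(B\pi_1(M))$. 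A diagram chase --- in which the hypothesis $\homdim\Gamma<n$ (hence $\homdim(B\Gamma,B\Lambda)\le n$ and $\homdim\pi_1(M)\le n$) is used to keep the classifying spaces finite-dimensional and the relevant groups concentrated in degree $\le n$ --- then shows that vanishing of $\alpha_{\pi_1(M)}(M)$ forces $c_*^{\Gamma,\Lambda}([X,\p X])$ into the subgroup of $\Omega_n^\spin(B\Gamma,B\Lambda)$ generated by the kernel of the relative $\alpha$-invariant together with the classes of $\pi$-$\pi$ pairs. These all lie in $\Omega_n^{\spin,H\ge0}(B\Gamma,B\Lambda)$: the kernel of the relative $\alpha$-invariant is generated, by \cite[Theorem B]{MR1259520} and \cite[Corollary 3.2]{MR3078256}, by $\bH\bP^2$-bundles and by manifolds carrying {\psc} product metrics near the boundary (exactly as in the proof of Theorem \ref{thm:relko}), and a $\pi$-$\pi$ pair carries a {\psc} metric with $H>0$ on its boundary by Theorem \ref{thm:MichelsohnLawson}.

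Granting this, $c_*^{\Gamma,\Lambda}([X,\p X])\in\Omega_n^{\spin,H\ge0}(B\Gamma,B\Lambda)$, so there is a spin bordism with corners from $(X,\p X)$ to a pair $(X_0,\p X_0)$ with $X_0$ admitting a {\psc} metric with $H\ge0$ on $\p X_0$. I would finish as in the proof of Theorem \ref{thm:relbordism}: decompose the bordism into handles of index between $1$ and $n-2$ (first fixing up $\pi_0$ and $\pi_1$ of the boundary components by $1$- and $2$-handles, then the interior), and push the metric across it, using the Gromov--Lawson surgery theorem \cite[Theorem A]{MR577131} in the interior and Theorem \ref{thm:handleattach} along the boundary, so that $H\ge0$ is maintained on the boundary throughout; the spin hypothesis guarantees the $1$- and $2$-spheres along which we surger have trivial normal bundles. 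This yields a {\psc} metric on $X$ with $H\ge0$ along $\p X$, which \cite[Theorem 33]{BH} upgrades to $H>0$, i.e., $(3)$, hence also $(2)$. The one genuine obstacle is the ``$\supseteq$'' step: understanding how the homotopy pushout $P$ differs from $B\pi_1(M)$ in degree $n$ and checking that $\homdim\Gamma<n$ confines that difference to the contribution of $\pi$-$\pi$ pairs; everything else is a routine adaptation of surgery arguments already in the paper.
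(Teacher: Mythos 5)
Your reduction to $(1)\Rightarrow(3)$, the description of $\pi_1(M)=\Gamma*_\Lambda\Gamma\cong F\rtimes\Gamma$, and the homological-dimension bookkeeping all match the paper, but the core of your argument diverges and contains two genuine gaps. First, you attempt to push a {\psc} metric with the $H\ge0$ boundary condition across a spin bordism with corners, invoking Theorem \ref{thm:handleattach} ``along the boundary''; but that theorem modifies a \emph{hypersurface} inside a \emph{fixed} ambient Riemannian manifold of {\psc} --- it does not change the metric, and it gives no cobordism-invariance statement for the condition ``$X$ admits {\psc} with $H\ge0$ on $\p X$.'' There is no analogue of Theorem \ref{thm:relbordism} for this boundary condition in the paper, and you have not supplied one, so the final ``push across the bordism'' step does not go through. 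Second, the algebraic heart of your plan --- that vanishing of $\alpha_{\pi_1(M)}(M)$ forces $c^{\Gamma,\Lambda}_*([X,\p X])$ into the subgroup of $\Omega_n^\spin(B\Gamma,B\Lambda)$ generated by the kernel of the relative $\alpha$-invariant together with $\pi$-$\pi$ pairs --- is not proved; you explicitly flag it as ``the one genuine obstacle,'' and it is indeed the entire content of the theorem. The difficulty is exactly the failure of the left square in diagram \eqref{eq:koclasses} to commute when $\Lambda\to\Gamma$ is not injective, illustrated by the $D^2/S^1/S^2$ example in the proof of Theorem \ref{thm:inccase}, and waving at the homotopy pushout $P=B\Gamma\cup_{B\Lambda}B\Gamma$ versus $B\pi_1(M)$ does not resolve it.

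The paper's actual route is geometric and sidesteps both problems. It pushes a \emph{parallel copy} $\p'X$ of $\p X$ inward, does surgeries on $2$-handles into the interior of $X$ to kill $K_j=\ker(\Lambda_j\to\Gamma)$, producing a codimension-$0$ submanifold $Y\subset X$ with \emph{incompressible} boundary $\p Y$; Theorem \ref{thm:inccase} then gives $Y$ a {\psc} metric that is a product near $\p Y$, and the {\psc} surgery theorem carries it across the doubled collar region $W$ to produce a closed-up {\psc} manifold $Q=Y\cup_{\p Y}W$ containing $X$. Only \emph{then} is Theorem \ref{thm:handleattach} invoked, in the setting where it actually applies: $\p Y\subset Q$ is a hypersurface with $H>0$ in the fixed ambient {\psc} manifold $Q$ (after a small deformation via \cite[Proposition 28]{BH}), and one moves it outward to $\p X$ by attaching the codimension-$2$ dual handles, keeping $H>0$ throughout. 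There is also a final limiting argument for the case where some $K_j$ is infinitely generated, which your proposal does not address. I would suggest abandoning the attempt to set up an $H\ge0$ bordism group and instead reducing to the incompressible case geometrically as the paper does.
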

\begin{proof}
  We have $\pi_1(M)=\Gamma*_\Lambda \Gamma$ by Van Kampen's Theorem.
  (This means the colimit of the diagram
  $\xymatrix{\Gamma&\Lambda\ar[l]\ar[r]&\Gamma}$.)
  We have the ``folding map'' $f\co M\twoheadrightarrow X$,
  sending each point $x\in X$ and the corresponding point $\bar x\in -X$
  both back to $x$, which is split by the inclusion of $X$ into $M$,
  and thus an induced split surjection of groups
  $f_*\co \Gamma*_\Lambda \Gamma\to \Gamma$.  
  By lifting to the universal cover of $X$ and arguing as in the
  proof of Theorem \ref{thm:sc}, we see that $\Gamma*_\Lambda \Gamma$
  splits as $F\rtimes \Gamma$, where $F$ is a free group.
  Thus if we know Baum-Connes with coefficients for $\Gamma$,
  the same follows for $\pi_1(M)=\Gamma*_\Lambda \Gamma$.
  Furthermore, the homological dimension of $\pi_1(M)$ is at most
  one more than the homological dimension of $\Gamma$, and is thus
  $\le n$.  This implies injectivity of the periodization maps
  $ko_n(B\Gamma)\to KO_n(B\Gamma)$ and $ko_n(B\pi_1(M))\to KO_n(B\pi_1(M))$,
  because any differentials in the Atiyah-Hirzebruch spectral
  sequences $H_p(B\Gamma, ko_q)\Rightarrow ko_{p+q}(B\Gamma)$
  and $H_p(B\Gamma, KO_q)\Rightarrow KO_{p+q}(B\Gamma)$ affecting the
  line $p+q=n$ have to be the same (and similarly for $B\pi_1(M)$),
  because $H_p$ vanishes for $p>n$.  (See Figure \ref{fig:AHSS}.)
  Since $\Gamma$ and $\pi_1(M)$
  both have finite homological dimension, they are torsion-free and
  Baum-Connes implies that the $KO$ assembly maps for both are
  injective.

  \begin{figure}[hbt]
    \begin{tikzpicture}
      \fill[blue!10!white] (0,0) -- (3,0) -- (3,3) -- (0,3) -- (0,0);
      \draw[thick,black,->] (0,0) -- (3.5,0);
      \draw[thick,red,->] (1,2) -- (1.8,1.5);
      \draw[thick,red,->] (1.2,1.5) -- (2,1);
      \node at (3.7, 0) {$p$};
      \node at (3, -.3) {$n$};
      \node at (-.3, 3) {$n$};
      \fill (3,0) circle (2pt);
      \fill (0,3) circle (2pt);
      \draw[thick,black,->] (0,0) -- (0,3.5);
      \draw[thick,black,dashed] (0,3) -- (3,0);
      \node at (0, 3.7) {$q$}; 
    \end{tikzpicture}  
  \caption{\label{fig:AHSS} The Atiyah-Hirzebruch spectral sequence
    for computing $ko_n(B\pi_1(M))$.  The corresponding spectral sequence
    for $KO_n(B\pi_1(M))$ is similar except that it extends into the
    fourth quadrant, and the map from this spectral sequence to that
    one preserves differentials affecting the line $p+q=n$.  Typical
    differentials are shown in color.}
  \end{figure}
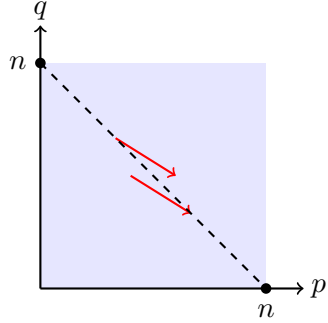

  As before, it suffices to prove that (1) $\Rightarrow$ (3).
  So assume $M$ has a metric of {\psc}.
  It will be convenient to denote by $\Lambda'$ the image of
  $\Lambda$ in $\Gamma$, so that also $\pi_1(M)=\Gamma*_{\Lambda'} \Gamma$.
  Start by choosing a collar neighborhood (diffeomorphic to
  $\p X \times [0, 1]$) of $\p X$ inside $X$ and let $\p' X=\coprod \p'_j X$
  be a ``parallel copy'' of $\p X$ inside this collar.
  First assume that for each $j$,
  $K_j=\ker\bigl(\Lambda_j\twoheadrightarrow \Lambda'_j)$ is finitely
  generated.  Then we modify each $\p'_j X$ to form a new hypersurface $Z_j$,
  by doing a finite number of surgeries on embedded circles to kill off the
  kernel $K_j$, inserting the necessary $2$-handles inward in $X$,
  as in Figure \ref{fig:innersurg}.  Note that the fact that $K_j$
  dies in $\pi_1(X)$ is what guarantees that we can build the necessary
  $2$-handles inside $X$.

  \begin{figure}[hbt]
    \begin{tikzpicture}
      \draw[very thick,black] (0cm, 1cm) arc(90:270:2cm and 1cm);
      \draw[very thick,black] (0cm, 1cm) -- (.7cm, 1cm);
      \draw[very thick,black] (0cm, -1cm) -- (.7cm, -1cm);
      \draw[very thick,red] (0cm, -1cm) arc(-90:90:.6cm and 1cm);
      \draw[very thick,red] (.7cm, -1cm) arc(-90:90:.6cm and 1cm);
      \node at (-2.3cm, 0cm) {$X$};
      \node at (1cm, 0cm) {$\p'_j X$};
      \node at (1.7cm, 0cm) {$\p_j X$};
      \node at (2cm, 1cm) {collar};
      \draw[green,snake=snake,->] (1.8cm,.7cm) -- (1cm,0.4cm);
      \draw[very thick,green] (.59cm, .1cm) arc(90:270:.2cm and .1cm);
      \draw[green,snake=snake,->] (-0cm,.2cm) -- (0.35cm,0.13cm);
      \draw[very thick,green] (.5cm, -.5cm) arc(60:280:.2cm and .1cm);
      \draw[green,snake=snake,->] (-0cm,-.02cm) -- (0.3cm,-0.37cm);
      \node at (-.8cm, .2cm) {2-handle};
    \end{tikzpicture}  
    \caption{\label{fig:innersurg} First step in the proof
      of Theorem \ref{thm:gencase}.  $Z_j$ is the result of attaching
      $2$-handles to $\p'_j X$ to reduce the fundamental group
      to $\Lambda_j'$.}
  \end{figure}
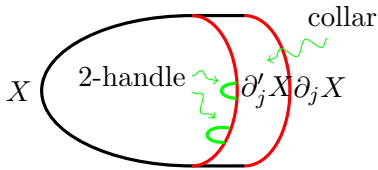

  After this initial surgery step, we have a new manifold $Y$ with
  boundary, of codimension $0$ in $X$, with $\p Y = Z = \coprod Z_j$
  incompressible in $X$.  Thus {\psc} for $M$ implies via the proof
  of Theorem \ref{thm:inccase} that $Y$ has a metric of {\psc} which
  is a product metric in neighborhood of $\p Y$.
  We double the region between $\p Y$ (which we recall is $\p' X$
  modified by surgeries) and $\p X$ across $\p X$, obtaining
  a manifold $W$ with boundary of codimension $0$ in $M$, whose boundary
  looks like two copies of $\p Y$, and is thus incompressible.
  (See Figure \ref{fig:outersurg}.)  Since the fundamental
  group of $\p Y$ is, by construction, a quotient of that of $\p X$,
  Van Kampen's Theorem gives that $\pi_1(\p Y)\cong \pi_1(W)$, with
  the isomorphism induced by the inclusion.  Thus $W$ is a
  $\pi_1$-preserving spin
  cobordism over $B\pi_1(\p Y)$ from $\p Y$ to its reflection
  across $\p X$. (See Figure \ref{fig:outersurg} again.) 
  By the surgery theorem for {\psc} metrics (\cite{MR577131} and
  \cite[Corollary 6.2]{MR4236952}), we can extend the metric on
  $Y$ to a {\psc} metric on $Q = Y\cup_{\p Y} W$ which is a product metric
  in a neighborhood of $\p Q$ (the reflection of the
  original $\p Y$ across $\p X$).  At this point, $\p Y$
  is now a two-sided totally geodesic hypersurface in $Q$.  By a slight
  deformation, using \cite[Proposition 28]{BH}, we can preserve
  {\psc} on $Q$ but arrange for $\p Y$ to have positive mean curvature.
  Now observe that $\p X\subset Q$ is obtained by adding handles
  back to $\p Y$, the duals of the
  surgeries used to construct $Y$ from $X$, and these are in codimension
  $2$.  So we can apply Theorem \ref{thm:handleattach} to
  deform $\p X$ so that it has positive mean curvature.  Since
  $X$ is inside $Q$, which has {\psc}, we have
  proven that (1) $\Rightarrow$ (3).

    \begin{figure}[hbt]
    \begin{tikzpicture}
      \fill[red!20!white] (-0.5cm, -.95cm) -- (-0.5cm, .95cm)
      arc(105:255:2cm and 1cm);
      \fill[red!35!white] (-0.5cm, .95cm) -- (-0.5cm, -.95cm)
      -- (0.5cm, -.95cm) -- (0.5cm, .95cm) -- (-0.5cm, .95cm);
      \fill[red!35!white] (.5cm, .5cm) arc(-90:90:.2cm and .1cm) --
      (.5cm, .5cm);
      \fill[red!35!white] (.5cm, -.7cm) arc(-90:90:.2cm and .1cm) --
      (.5cm, -.7cm);
      \fill[red!35!white] (.5cm, -.1cm) arc(-90:90:.2cm and .1cm) --
      (.5cm, -.1cm);
      \fill[red!35!white] (-.5cm, .7cm) arc(90:270:.2cm and .1cm)
      -- (-.5cm, .7cm);
      \fill[red!35!white] (-.5cm, -.5cm) arc(90:270:.2cm and .1cm)
      -- (-.5cm, -.5cm);
      \fill[red!35!white] (-.5cm, .1cm) arc(90:270:.2cm and .1cm)
      -- (-.5cm, .1cm);
      \node at (-1cm, 0cm) {$Y$};
      \node at (-.9cm, .6cm) {$\scriptstyle \p Y$};
      \node at (.2cm, 0cm) {$W$};
      \node at (.9cm, .4cm) {$\scriptstyle \p W$};
      \node at (-.2cm, .5cm) {$\scriptstyle \p X$};
      \draw[very thick,black] (2cm, 0cm) arc(0:360:2cm and 1cm);
      \draw[very thick,black] (0cm, -1cm) -- (0cm, 1cm);
      \draw[very thick,black] (0.5cm, -.95cm) -- (0.5cm, .95cm);
      \draw[very thick,black] (-0.5cm, -.95cm) -- (-0.5cm, .95cm);
      \draw[very thick,green] (.5cm, .5cm) arc(-90:90:.2cm and .1cm);
      \draw[very thick,green] (-.5cm, .7cm) arc(90:270:.2cm and .1cm);
      \draw[very thick,green] (.5cm, -.7cm) arc(-90:90:.2cm and .1cm);
      \draw[very thick,green] (-.5cm, -.5cm) arc(90:270:.2cm and .1cm);
      \draw[very thick,green] (.5cm, -.1cm) arc(-90:90:.2cm and .1cm);
      \draw[very thick,green] (-.5cm, .1cm) arc(90:270:.2cm and .1cm);
    \end{tikzpicture}  
    \caption{\label{fig:outersurg} Second step in the proof
      of Theorem \ref{thm:gencase}.  First we construct a {\psc} metric
      on $Y$ (light color), then push it across $W$ (dark color),
      and then get back to $\p X$ from $\p Y$ by doing outward-pointing
      surgeries.}
  \end{figure}
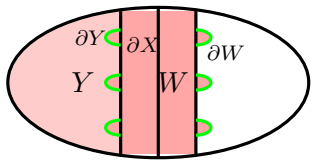

  There is just one more step if one or more of the kernels $K_j$
  is not finitely generated.  The problem now is that it appears we
  need to do infinitely many surgeries to get from $X$ to $Y$ and
  back again.  However, we can construct a sequence of modifications
  of $\p_j X$ (each obtained from the previous one by attaching more
  $2$-handles into the interior of $X$), say $Z_j^{(k)}$, $k=1,2\cdots$,
  so that $\pi_1(Z_j^{(k)})\to \Lambda'_j$ as $k\to \infty$.  We get
  a sequence $X^{(k)}$ of manifolds with boundary,
  $\p X^{(k)}=\coprod_j Z_j^{(k)}$, and $ko$-homology
  classes, the images of $[X, \p X]$ in
  $ko_n(X^{(k)}, \p X^{(k)})$ which tend to $0$ in the limit.  By the
  behavior of homology under inductive limits, there must be some finite
  stage at which $[X^{(k)}, \p X^{(k)}]$ vanishes, and thus
  $X^{(k)}$ has a {\psc} metric which is a product metric near the
  boundary.  The proof is now concluded as before.
\end{proof}

The analogous theorem in the totally non-spin case is this:
\begin{theorem}
\label{thm:gencasenonspin}
Let $X$ be a connected compact manifold of dimension $n\ge 6$ with
non-empty boundary $\p X$. There can be any number $k$ of boundary
components $\p_1 X,\cdots,\p_k X$.  Assume that $X$ is totally non-spin.
Pick basepoints in each boundary
component and let $\Gamma=\pi_1(X)$ {\lp}this can be
with respect to any choice of basepoint{\rp}, $\Lambda_j=\pi_1(\p_j X)$.
Assume that for each $j$, the image of $\Lambda_j$ in $\Gamma$ is
finitely presented. Suppose that $\Gamma$ has finite homological
dimension and that $n\ge\homdim\Gamma + 2$. Let $M=\Dbl(X, \p X)$ be
the double of $X$ along $\p X$, a closed $n$-manifold. Then the
following conditions hold:
\begin{enumerate}
\item $M$ admits a metric of \psc.
\item $X$ admits a {\psc} metric for which $\p X$ is minimal
  {\lp}i.e., has vanishing mean curvature{\rp}.
\item $X$ admits a {\psc} metric which gives $\p X$ positive mean curvature
  with respect to the outward normal.
\end{enumerate}
\end{theorem}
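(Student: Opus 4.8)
The plan is to run the argument of the proof of Theorem~\ref{thm:gencase}, but with ordinary integral homology in place of $ko$-homology, Theorem~\ref{thm:relhom} in place of Theorem~\ref{thm:relko}, Theorem~\ref{thm:inccasenonspin} in place of Theorem~\ref{thm:inccase}, and \cite[Theorem 4.11]{MR1818778} (or \cite[Theorem 1.2]{MR3078256}) in place of \cite[Theorem 4.13]{MR1818778}. The role played there by the Baum--Connes and Gromov--Lawson--Rosenberg hypotheses together with injectivity of periodization is played here entirely by the dimension bound $n\ge\homdim\Gamma+2$, which forces the relevant obstruction groups --- now ordinary homology groups --- to vanish outright, so that no $K$-theoretic injectivity is needed.

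First I would record the group theory. By Van Kampen, $\pi_1(M)=\Gamma*_\Lambda\Gamma=\Gamma*_{\Lambda'}\Gamma$, where $\Lambda'=\coprod_j\Lambda'_j$ is the image of $\Lambda$ in $\Gamma$; lifting to the universal cover of $X$ and using the folding map, exactly as in the proof of Theorem~\ref{thm:gencase}, shows $\pi_1(M)\cong F\rtimes\Gamma$ with $F$ free, so $\homdim\pi_1(M)\le\homdim\Gamma+1\le n-1$ and hence $H_n(B\pi_1(M);\bZ)=0$. Since $X$ is totally non-spin, so is $M$ (the immersed $2$-sphere on which $w_2\ne0$ lies in $M$ too), so by \cite[Theorem 4.11]{MR1818778} the vanishing of $c_*[M]$ in $H_n(B\pi_1(M);\bZ)$ already gives (1). (Of course (1) also re-emerges from (3) via Theorem~\ref{thm:GLdoubling}.) It then remains to produce the {\psc} metric required in (3); statement (2) will follow from it by \cite[Corollary 34]{BH}, exactly as in the earlier theorems.

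For (3), I would first make the boundary incompressible by surgery. Since each $\Lambda_j=\pi_1(\p_jX)$ is finitely presented ($\p_jX$ being a closed manifold) and each image $\Lambda'_j$ is finitely presented by hypothesis, $K_j=\ker(\Lambda_j\twoheadrightarrow\Lambda'_j)$ is finitely generated as a normal subgroup of $\Lambda_j$; so finitely many surgeries on embedded circles on a parallel copy $\p'_jX$ --- with the $2$-handles pushed inward, which is possible precisely because $K_j$ dies in $\Gamma$ --- produce a hypersurface $Z_j$ with $\pi_1(Z_j)=\Lambda'_j$, cutting off a codimension-$0$ submanifold $Y\subset X$ with $\p Y=\coprod_j Z_j$ incompressible in $Y$ and $\pi_1(Y)=\Gamma$ (the caveat about non-finitely-generated kernels that complicated Theorem~\ref{thm:gencase} is avoided here by the finite-presentation hypothesis). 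Since these are surgeries on $1$-spheres, they carry no normal-bundle obstruction in an oriented manifold --- this is the one place where the totally non-spin case is \emph{simpler} than the reasoning behind Theorems~\ref{thm:relbordismnonspin} and \ref{thm:relhom}, which had to handle $2$-sphere surgeries and $w_2$ --- and $Y$ and the $Z_j$ remain totally non-spin because the relevant $2$-sphere may be taken disjoint from the $1$-dimensional surgery loci. Now $Y$ satisfies all the hypotheses of Theorem~\ref{thm:inccasenonspin} (here $\homdim\Lambda'_j\le\homdim\Gamma$ because $\Lambda'_j\le\Gamma$, and $n\ge\homdim\Gamma+2$), so $Y$ carries a {\psc} metric that is a product metric near $\p Y$.

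Finally I would run the Riemannian endgame of Theorem~\ref{thm:gencase} unchanged, since it never uses the spin structure: double the region between $\p Y$ and $\p X$ across $\p X$ to obtain a $\pi_1$-preserving cobordism $W$ from $\p Y$ to its reflection; push the {\psc} product metric on $Y$ across $W$ by the surgery theorem for {\psc} metrics (\cite{MR577131}, \cite[Corollary 6.2]{MR4236952}) to get a {\psc} metric on $Q=Y\cup_{\p Y}W\supset X$ that is a product metric near $\p Q$; perturb $\p Y$ inside $Q$ to have positive mean curvature using \cite[Proposition 28]{BH}; and then, since $\p X$ is recovered from $\p Y$ by adding back the dual, codimension-$2$ handles, apply Theorem~\ref{thm:handleattach} to deform $\p X$ inside $Q$ so that it too has positive mean curvature with respect to the outward normal. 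As $X\subset Q$ and $Q$ has {\psc}, this establishes (3). The step I expect to require the most care is the incompressibilization: verifying that the surgered $Y$ genuinely has $\pi_1(Y)=\Gamma$ with incompressible, totally non-spin boundary, so that Theorem~\ref{thm:inccasenonspin} truly applies --- once that is in place, the homological-dimension bookkeeping $\homdim(B\Gamma,B\Lambda')\le\max(\homdim\Gamma,\homdim\Lambda'+1)\le n-1$ does the rest and, in contrast to Theorems~\ref{thm:gencase} and \ref{thm:inccase}, no assembly- or periodization-injectivity hypotheses are needed at all.
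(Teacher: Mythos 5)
Your overall strategy is the paper's: establish (1) from $\homdim\pi_1(M)\le\homdim\Gamma+1<n$ plus total non-spin-ness of $M$, incompressibilize the boundary by finitely many circle surgeries pushed inward (the finite-presentation hypothesis making the kernel finitely normally generated), apply the incompressible-case theorem to the resulting $Y$, and finish with Theorem~\ref{thm:handleattach}. But there is one genuine gap, at exactly the step you yourself flag as the delicate one. You claim that ``$Y$ and the $Z_j$ remain totally non-spin because the relevant $2$-sphere may be taken disjoint from the $1$-dimensional surgery loci.'' For $Y$ this is fine, but for the $Z_j$ the assertion is unjustified and in general false: the theorem only assumes $X$ is totally non-spin, not the boundary components, so there is nothing for $Z_j$ to ``remain.'' The $w_2$-nontrivial $2$-sphere lives in the interior of $Y$ and says nothing about $Z_j$, which is obtained from $\p_j X$ by circle surgeries and inherits whatever normal $1$-type $\p_j X$ had. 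For instance, $\p_j X$ could be spin --- even a spin manifold with nonvanishing $\alpha$-invariant that admits no {\psc} metric at all (the paper discusses precisely this scenario just before Theorem~\ref{thm:gencase}) --- in which case Theorem~\ref{thm:inccasenonspin} does not apply to $Y$ and indeed $Y$ cannot carry a {\psc} metric that is a product near $\p Y$.

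The paper closes this gap with an extra step you are missing: if some $\p_j Y$ fails to be totally non-spin, one performs an additional modification, forming the connected sum of $\p_j Y$ with the boundary of a tubular neighborhood of an embedded $2$-sphere in the interior of $Y$ whose normal bundle is non-trivial (such spheres exist because $Y$ is totally non-spin, and the sphere bundle is simply connected so $\pi_1(\p_j Y)$ and incompressibility are unaffected). Only after this are all boundary components totally non-spin, so that Theorem~\ref{thm:inccasenonspin} applies. Everything else in your write-up --- the group-theoretic bookkeeping, the observation that circle surgeries carry no normal-bundle obstruction, and the Riemannian endgame through $W$, $Q$, \cite[Proposition 28]{BH}, and Theorem~\ref{thm:handleattach} --- matches the paper's argument.
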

\begin{proof}
  Note that $M$ is totally non-spin and its dimension $n$ exceeds the
  homological dimension of $\pi_1(M)=\Gamma*_\Lambda\Gamma$, which
  is no more than $\homdim\Gamma + 1$, as in the proof of Theorem
  \ref{thm:inccasenonspin}.  So by 
  \cite[Theorem 4.11]{MR1818778} or \cite[Theorem 1.2]{MR3078256},
  $M$ admits a metric of {\psc},
  verifying (1).  To check (2) and (3), we apply the same argument
  as in Theorem \ref{thm:gencase}; i.e., we first do surgeries
  into the interior to kill off the kernel of $\Lambda_j\to\Gamma$
  for each $j$, ending up with a connected totally
  non-spin manifold $Y$ of codimension $0$ in $X$
  with incompressible boundary. Now recall
  that $X$ is totally non-spin, and so is $Y$, but some of the boundary
  components $\p_j Y$ might ``accidentally'' fail to be
  totally non-spin.  If this is
  the case for some $j$, do an additional surgery to take a connected
  sum of $\p_j Y$ with a tubular neighborhood of an embedded $2$-sphere
  in the interior of
  $Y$ whose normal bundle is non-trivial (such $2$-spheres exist
  since $Y$ is totally non-spin).  After doing this, all the $\p_j Y$ will
  also be totally non-spin, and hence will admit metrics of {\psc} for
  the same reason as $M$ (since the condition on the homological
  dimension of $\Gamma$ inherits to subgroups). Now we can apply Theorem
  \ref{thm:inccasenonspin} to $Y$,
  and then apply Theorem \ref{thm:handleattach} to get positive mean
  curvature on the boundary of $X$.
\end{proof}
\begin{remark}
\label{rem:nonspinntfg}  
By the same reasoning as in the proof of Theorem \ref{thm:gencase},
one can dispense with the finite presentation hypothesis on the images
of $\Lambda_j$ in $\Gamma$.  Even though, when one of these images
is not finitely presented, it won't be possible 
to modify $Y$ in finitely many steps
so as to have incompressible boundary, $c_*([Y,\p  Y])$
vanishes in $H_n(B\Gamma, B\operatorname{image}(\Lambda);\bZ)=0$,
and thus will vanish in $H_n(B\pi_1(Y), B\pi_1(\p Y);\bZ)$ for $Y$
``close enough'' to having incompressible boundary, and then we can
proceed as before.
\end{remark}

\section{The Gromov-Lawson Doubling Theorem}
\label{sec:GL}

Since the proof of Theorem \ref{thm:GLdoubling} in \cite{MR569070}
is somewhat sketchy, we include for the convenience of the reader
a more detailed proof.  We should mention that the paper \cite{BH} also
gives another approach to ``doubling.'' Indeed, \cite[Corollary 34]{BH} gives
a homotopy equivalence between $\cR^+(X)_{H>0}$ ({\psc} metrics on $X$
with positive mean curvature on $\p X$) and $\cR^+(X)_{\Dbl}$ ({\psc} metrics
on $X$ with vanishing second fundamental form on the boundary, that extend
to reflection-invariant {\psc} metrics on $\Dbl(X,\p X)=M$).

\begin{proof}[{Proof of Theorem \ref{thm:GLdoubling}}]
  Following the proof in \cite{MR569070}, let $I = [-1,1]$, give $X\times I$
  the product metric, and identify $X$ with $X\times \{0\}$.  Let
  \[
  N=\{(x,t)\in X\times I: \text{dist}\,((x,t),X_1)\le\varepsilon\}
  \subset X\times \bR,
  \]
  where $X_1$ is the complement in $X$ of a small open collar around $\p X$.   
  Then the double $M$ is obtained by smoothing the $C^1$ manifold
  $\p N$. We give $N$ the metric inherited
  from $X\times I$. The portions of $\p N$ given by $X_1\times \{\varepsilon\}$
  and $X_1\times \{-\varepsilon\}$ are smooth and isometric
  to $X_1$, but the second derivative of the metric is discontinuous
  at points in $\p X_1\times \{\pm \varepsilon\}$ like $z$ in
  Figure \ref{fig:fig1}. As in \cite{MR569070}, 
  choose $x\in \p X_1$ and let $\sigma$ be a small geodesic segment
  in $X$ passing through $x$ and orthogonal to the boundary of $X_1$.  Then
  $(\sigma \times I)\cap N$ is flat and totally geodesic in $N$, and a local
  picture of it near $x$ looks like Figure \ref{fig:fig1}.
  In this plane we have the unit vector field $\boldsymbol r$ (`r' for right)
  which along $\sigma\times \{t\}$ is the unit tangent vector pointing away
  from the interior of $X_1$.  We also have the unit vector field
  $\boldsymbol d$ (`d' for down) pointing downward along the
  lines $\{m\}\times \bR$, $m\in X$.  If we instead take
  a slice of $N$ parallel to $X_1$ through a point $y$ at angle $\theta$
  from $X_1$ (see Figure  \ref{fig:fig1} and Figure
  8 in \cite{MR569070}), we get a picture like Figure \ref{fig:fig2}.
  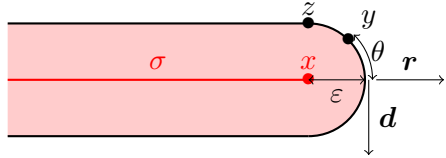
\begin{figure}[hbt]
    \begin{tikzpicture}
      \fill[red!20!white] (-1, .75) -- (-1, -.75) --
      (3, -0.75) arc (-90:90:0.75) -- (-1, .75);
      \draw[thick,red]  (-1,0) -- (3, 0)
      node[above,midway] {$\sigma$} node[above, at end] {$x$};
      \node[red] at (3,0) {$\bullet$};
      \draw[black, <->] (3, 0.0) -- (3.75, 0.0)
      node[below,midway] {$\varepsilon$};
      \draw[thick,black] (-1,0.75) -- (3, 0.75);
      \draw[thick,black] (-1,-0.75) -- (3, -0.75);
      \draw[thick,black] (3, -0.75) arc (-90:90:0.75);
      \draw[black,->] (3.8,0) -- (3.8,-1) node[right,midway]
           {$\boldsymbol d$};
      \draw[black,->] (3.9,0) -- (4.8, 0) node[above,midway]
           {${\boldsymbol r}$};
      \draw[black, <->] (3.85, 0) arc (0:45:0.85);
      \node at (3.924, .382) {$\theta$};
      \node at (3, 0.75) {$\bullet$};
      \node at (3.53, .53) {$\bullet$};
      \node at (3, .95) {$z$};
      \node at (3.8, .8) {$y$};
    \end{tikzpicture}  
  \caption{\label{fig:fig1} Slice of $N$ normal to $X_1$}
  \end{figure}
  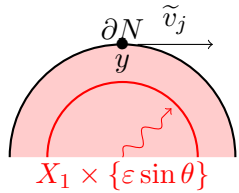
\begin{figure}[hbt]
    \begin{tikzpicture}
      \fill[red!20!white] (1.5,0) arc(0:180:1.5) -- (1.5, 0);
      \draw[thick,red]  (1,0) arc(0:180:1);
      \draw[thick,black]  (1.5,0) arc(0:180:1.5);
      \node[red] at (0,-.25) {$X_1\times \{\varepsilon\sin\theta\}$};
      \draw[red,snake=snake,->] (0,0) -- (0.65,0.65);
      \draw[black,->] (.2,1.5) -- (1.2,1.5)
      node[above,midway] {$\widetilde v_j$};
     \node at (0, 1.5) {$\bullet$};
      \node at (0, 1.25) {$y$};
      \node at (0, 1.7) {$\p N$};
    \end{tikzpicture}  
  \caption{\label{fig:fig2} Slice of $N$ through $y$ parallel to $X_1$}
  \end{figure}
  
  Choose an orthonormal
  frame $v_1,\cdots, v_{n-1}$ for $T_x(\p X_1)$ that diagonalizes
  the second fundamental form for $\p X_1$ in $X_1$ with respect to the
  outward-pointing normal vector vector field $\boldsymbol n$,
  which at $x$ coincides with $\boldsymbol r$.
  Thus we can assume that the shape operator has the form
  $S_{\p X_1}v_j=\mu_j v_j$, and the mean curvature $H_{\p X_1}$
  of $\p X_1$ is $\sum_j \mu_j$.  The assumption of the theorem implies
  that this is positive.  (Positive mean curvature of $\p X$ in $X$
  implies positivity of $H_{\p X_1}$ if $X_1$ is
  close enough to $X$.) At a point $y\in \p N$ at distance $\varepsilon$
  from $x\in \p X_1$ as in Figure \ref{fig:fig1}, the outward-pointing
  normal vector to $N$ is given by
  $\boldsymbol n=\cos\theta\,\boldsymbol r-\sin \theta\,\boldsymbol d$.
  There is an orthonormal frame
  $w_0,w_1,\cdots, w_{n-1}$ for $T_y(\p N)$ with
  $w_0 = \cos \theta \,\boldsymbol d + \sin\theta\, \boldsymbol r$,
  and with $w_j$ close to the parallel transport $\widetilde v_j$
  of $v_j$.  Since $\sigma\times \bR$ is totally geodesic in $X\times \bR$
  and $\p N\cap (\sigma\times \bR)$, shown in Figure \ref{fig:fig1},
  consists of $\sigma\times\{\pm\varepsilon\}$ joined together by
  a semicircular arc of radius $\varepsilon$, it follows that $w_0$, which
  lies in this $2$-plane, is
  an eigenvector for the shape operator of $\p N$, with eigenvalue
  $\frac{1}{\varepsilon}$.  On the other hand, for $j=1,\cdots, {n-1}$, 
  $\nabla_{w_j}\boldsymbol d=0$ and so
  \[
    S_{\p N}(w_j) = -\nabla_{w_j}\boldsymbol n\approx
    \cos\theta\, \mu_jw_j
  \]
  and so by the Gauss curvature formula, the scalar curvature
  $\kappa_{\p N}$ of $\p N$ at $y$ works out to
  \[
  \begin{aligned}
  \kappa_{\p N} & \approx \kappa_X +
  \sum_{j=1}^{n-1}\frac{2}{\varepsilon}\cos\theta\,\mu_j
  + \sum_{1\le j\ne  k\le n-1}\cos^2\theta\,\mu_j\mu_k\\
  &=\kappa_X +\frac{2\cos\theta}{\varepsilon}\,H_{\p X_1} +
  \cos^2\theta\,H_{\p X_1}^2 - \sum_{j=1}^{n-1}\cos^2\theta\,\mu_j^2.
  \end{aligned}
  \]
  Since $\kappa_X$ and $H_{\p X_1}$ are positive, this is positive
  and tends to $\kappa_X$ as $\theta\to \pm\frac{\pi}{2}$.
  So we can round the corners at these points keeping positivity of
  the scalar curvature.  (Note that the formula obtained here is slightly
  different from the one in \cite{MR569070}, but this doesn't affect the
  conclusion.)
\end{proof}

\section{Open Problems}
\label{sec:questions}

We have left several open problems in our discussion.  In this section,
we list a few of these and say something about where they stand.
\begin{enumerate}
\item The most obvious question is how generally a converse to
  Theorem \ref{thm:GLdoubling} is valid.  One can state this as
  \begin{conjecture}[``Doubling Conjecture'']
    \label{conj:doubling}
    If $X$ is a compact manifold with boundary and $M=\Dbl(X, \p X)$
    admits a metric of {\psc}, then $X$ admits a metric of {\psc} with
    positive mean curvature on $\p X$.
  \end{conjecture}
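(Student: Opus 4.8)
The natural attack is to push the index- and bordism-theoretic method of Sections \ref{sec:obstr}--\ref{sec:gencase} as far as it will go. Write $\Gamma=\pi(X)$ and $\Lambda=\coprod_j\pi(\p_jX)$, so that by Van Kampen's theorem $\pi_1(M)=\Gamma*_\Lambda\Gamma$, which --- via the folding map $f\co M\twoheadrightarrow X$ split by the inclusion $X\hookrightarrow M$, exactly as in the proof of Theorem \ref{thm:gencase} --- splits as $F\rtimes\Gamma$ with $F$ free. The plan has three stages. \emph{Stage one} is geometric: by a (possibly infinite, but ``close enough'' in the sense of Remark \ref{rem:nonspinntfg}) sequence of surgeries on a parallel copy of $\p X$ pushed into the interior of $X$, killing the kernels $\ker(\Lambda_j\to\Gamma)$, one replaces $X$ by a codimension-$0$ submanifold $Y$ with incompressible boundary; Theorem \ref{thm:handleattach} then transports a {\psc} metric on $Y$ that is a product near $\p Y$ back to a {\psc} metric on $X$ with $H>0$ on $\p X$, and \cite[Theorem 33, Corollary 34]{BH} makes conditions (1)--(3) of the conjecture interchangeable with ``product near the boundary.'' \emph{Stage two} invokes Theorem \ref{thm:relko}: it now suffices to show that the relative $ko$-fundamental class $c^{\Gamma,\Lambda}_*([Y,\p Y])\in ko_n(B\Gamma,B\Lambda)$ vanishes. \emph{Stage three} tries to deduce this from the hypothesis that $M$ carries {\psc}: since $\p Y$ is incompressible, the left-hand square of diagram \eqref{eq:koclasses} commutes, and $c^{\Gamma,\Lambda}_*([Y,\p Y])$ is the image of $c_*[M]\in ko_n(B\pi_1(M))$ under the map induced by $f$ followed by relativization.

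The crux is stage three, and here the difficulty is genuinely that of the Gromov--Lawson--Rosenberg conjecture, which is \emph{false} for groups with torsion. If $c_*[M]$ vanished in $ko_n(B\pi_1(M))$ we would be done; that happens precisely when $\per$ and the $KO$-assembly map $A$ are injective for $\pi_1(M)$, which is the situation already covered by Theorems \ref{thm:inccase}--\ref{thm:gencasenonspin}. The point on which I would concentrate effort is that strictly less is needed: only the \emph{image} of $c_*[M]$ in $ko_n(B\Gamma,B\Lambda)$ has to vanish, and the map $ko_n(B\pi_1(M))\to ko_n(B\Gamma,B\Lambda)$ has large kernel --- everything ``supported on the free fibre'' $BF$ of the fibration $B(F\rtimes\Gamma)\to B\Gamma$ dies. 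Two structural features of doubles are the levers to pull: $M$ is always spin null-bordant (the boundary of the rounded $X\times I$), so the genuine $C^*$-index obstruction to {\psc} on $M$ vanishes under the \emph{sole} hypothesis that $M$ carries {\psc} --- no injectivity hypothesis is needed for that direction --- and, chasing along a half-cylinder attached to $\p Y$ as in the proof of Theorem \ref{thm:obstrprod}, this forces the relative $C^*$-index of $(Y,\p Y)$ in $KO_n(C^*(\Gamma,\Lambda))$ (the mapping-cone algebra of Remark \ref{rem:maxC*}) to vanish as well; moreover $\pi_1(M)=F\rtimes\Gamma$ is assembled from $\Gamma$ and the $\Lambda_j$, so one may hope that any {\psc}-obstruction of a double beyond the ordinary $\alpha$-invariant is confined to that free fibre and hence invisible in the relative group. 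Making this precise --- or finding a relative $ko$-to-$KO$ comparison driven by vanishing of the relative $C^*$-index rather than of the relative $ko$-class --- is what stage three really requires.

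The main obstacle I foresee is exactly this last step: bridging ``the relative $C^*$-index of $(Y,\p Y)$ vanishes'' and ``$(Y,\p Y)$ is {\psc}-cobordant rel boundary to a manifold that is a product near its boundary'' when $\Gamma$ (hence $F\rtimes\Gamma$) has torsion. There is no a priori reason the doubling hypothesis should rescue the Gromov--Lawson--Rosenberg conjecture, so a full proof of Conjecture \ref{conj:doubling} as stated may require either a genuinely new non-index argument special to doubles, or a retreat to the (still very large) class of fundamental groups for which $\per$ and $A$ are injective, for which Theorems \ref{thm:inccase}--\ref{thm:gencasenonspin} already settle it. A realistic intermediate target is the totally non-spin case: there the homological analogue of stage three (Theorem \ref{thm:relhom}) carries no torsion obstructions at all, and I expect Conjecture \ref{conj:doubling} to be provable for every totally non-spin $X$ whose fundamental group has finite homological dimension --- a statement of which Theorem \ref{thm:gencasenonspin} is a special case, and whose proof would mainly require removing the finite-presentation hypothesis on the images $\Lambda_j\to\Gamma$, along the lines of Remark \ref{rem:nonspinntfg}.
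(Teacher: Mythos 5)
You are being asked to ``prove'' a statement that the paper itself explicitly leaves open: Conjecture~\ref{conj:doubling} is a conjecture, not a theorem, and the authors say they know neither a proof nor a disproof (only that Carlotto and Li have settled it in dimension~$3$, and that the obstructions from~\cite[\S2.3]{BH} cannot produce counterexamples). Your three-stage roadmap is essentially the paper's own strategy for the \emph{partial} results: stage one (push $\p X$ inward, surger circles to kill $\ker(\Lambda_j\to\Gamma)$, then use Theorem~\ref{thm:handleattach} to come back out with $H>0$) is the argument of Theorem~\ref{thm:gencase}; stage two is Theorem~\ref{thm:relko}; stage three is where Theorems~\ref{thm:inccase} and~\ref{thm:gencase} impose the injectivity of periodization and assembly. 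So the proposal is a faithful reconstruction of the method, and you have correctly located the gap, which is genuinely open.

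A few things deserve sharpening. The map $ko_n(B\pi_1(M))\to ko_n(B\Gamma,B\Lambda)$ in diagram~\eqref{eq:koclasses} is not induced by the folding map $f$; it is $ko_n(B\pi_1(M))\to ko_n(B\pi_1(M),B\Gamma)$ (relativizing with respect to one copy of $X\subset M$) followed by the excision isomorphism $ko_n(B\pi_1(M),B\Gamma)\cong ko_n(B\Gamma,B\Lambda)$, and it is precisely this excision step that needs incompressibility. The spin-null-bordism of $M$ (rounding $X\times I$) kills only the image of $[M]$ in $ko_n(\pt)$ --- this is used in the proof of Theorem~\ref{thm:sc} and has nothing to do with whether $M$ carries {\psc} --- whereas the higher class in $ko_n(B\pi_1(M))$ is untouched. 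The heuristic that ``everything supported on the free fibre dies'' is the right intuition for why the $F\rtimes\Gamma$ splitting helps, but once you pass to the relative $C^*$-index in $KO_n(C^*(\Gamma,\Lambda))$ you still need relative periodization and relative assembly to be injective to return to the $ko$-class, so you have relocated, not removed, the Gromov--Lawson--Rosenberg-type obstacle; for $\Gamma$ with torsion there are $ko$-classes invisible to $C^*$-algebraic $K$-theory, and nothing about the doubling hypothesis per se rules them out. Finally, your proposed intermediate target for the totally non-spin case is essentially Theorem~\ref{thm:gencasenonspin} with the finite-presentation hypothesis relaxed as in Remark~\ref{rem:nonspinntfg}, but one still needs the dimension bound $n\ge\homdim\Gamma+2$; removing that is also open. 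In short: your assessment of the state of the art is correct, the gap you flag is real, and the paper contains no hidden argument that fills it.
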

  At the moment we do not know of any counterexamples, nor do we know
  of any technology that could be used to disprove this in general.
  Conjecture \ref{conj:doubling} has now been proved in dimension $3$ by
  Carlotto and Li \cite{carlotto2021,carlotto2021a}.  Note by the
  way that \cite[\S2.3]{BH} gives an obstruction to a manifold $X$
  with boundary admitting a {\psc} metric with $H>0$ on the boundary,
  but it can't give any counterexamples to the Doubling Conjecture
  since in all cases where the obstruction applies, it applies
  to the double as well.
\item Another question is how generally Corollary \ref{cor:NandSpscprod}
  and Theorem \ref{thm:relbordismnonspin} can hold.  One cannot
  get rid of the dimension restriction, since
  Theorem \ref{thm:relhom} fails
  when $\dim X = 5$, as one can see from the following counterexample.
  Let $Y$ be a smooth non-spin simply connected projective algebraic surface
  (over $\bC$) of general type, for example a hypersurface of degree $d$
  in $\bC\bP^3$ of even degree $d\ge 4$.  Then
  $b_2^+(Y)=1+\frac{(d-1)(d-2)(d-3)}{3}>1$ and $Y$ has non-trivial
  Seiberg-Witten invariants, hence does not admit a metric of {\psc}.
  Let $X=Y\times [0,1]$, which is a compact $5$-manifold with boundary.
  The double $M$ of $X$ along its boundary is $Y\times S^1$, which is
  a closed totally non-spin $5$-manifold with fundamental group $\bZ$,
  and which thus admits {\psc} as a consequence of the theorem
  of Stolz and Jung (\cite[Theorem 4.11]{MR1818778}
  or \cite[Theorem 1.2]{MR3078256}).  However $X$ cannot admit a
  {\psc} metric which is a product metric near the boundary, since
  $\p X = Y \sqcup -Y$ does not admit a metric of {\psc}.
  Yet the relative homology group $H_5(B*, B(*\sqcup *);\bZ)$ vanishes.

  One can modify this counterexample so that it is even a counterexample to
  the weaker Theorem \ref{thm:relbordismnonspin}.  Let $Y'$ be a connected
  sum of copies of $\overline{\bC\bP^2}$ with the same signature
  $-\frac{d}{3}(d^2-4)$ as $Y$, and let $X'=Y'\times [0,1]$.
  Then $X'$ obviously has a product metric of {\psc}, while
  $X$ does not; yet they represent the
  same class in $\Omega_5(B*, B(*\sqcup *);\bZ)\cong \Omega_4\cong \bZ$
  since that class is detected just by the signature.
\item Suppose $X$ is a compact manifold with boundary, of dimension
  $n\ge 6$ so that our theorems apply.  A major problem is to try
  to determine the homotopy type of $\cR^+(X)_{H>0}$ ({\psc} metrics on $X$
  with positive mean curvature on $\p X$) when this space
  is non-empty, or at least to give nontrivial
  lower bounds on the homotopy groups.  As we mentioned before,
  \cite[Corollary 34]{BH} shows that this space is homotopy equivalent
  to $\cR^+(M)^{\bZ/2}$, the reflection-invariant {\psc} metrics on
  the double $M=\Dbl(X,\p X)$.  And \cite[Corollary 40]{BH} shows that 
  when $\p X$ admits a {\psc} metric, then considering the
  space of {\psc} metrics with product boundary conditions would
  give us the same homotopy type.  In this case
  (when there is a {\psc} metric which is a product metric near
  the boundary) and when everything is spin,
  \cite{MR3681394,MR3956897} give lower bounds on the homotopy
  groups of $\cR^+(X)_{H>0}$ in terms of the $KO$-groups of the
  $C^*$-algebra of the fundamental group.  It is possible that a similar
  analysis, using APS methods as in \cite[\S2.1]{BH}, would also
  work with the weaker boundary condition $H>0$.

  The references just cited, and other similar ones, also say
  something about the homotopy
  type of $\cR^+(M)$ (the {\psc} metrics with no equi\-variance condition),
  but it's not immediately clear how this translates into information
  about $\cR^+(M)^{\bZ/2}$.  An example worked out in
  \cite[Corollary 45 and Remark 46]{BH} does give a case where
  $\cR^+(X)_{H>0}$ has infinitely many path components and infinite
  homotopy groups in all dimensions, but the construction is somewhat
  \emph{ad hoc}.
\end{enumerate}
\bibliographystyle{amsplain}

\bibliography{PSCDouble}

\end{document}